\documentclass[12pt]{article}

\usepackage{amsmath,amssymb,amsfonts,mathtools} 
\usepackage{xcolor,comment}
\usepackage[all]{xy}
\usepackage{amscd}
\usepackage{enumitem}

\usepackage[utf8]{inputenc}
\usepackage{authblk} 

\usepackage[margin=3cm]{geometry} 

\usepackage{amsthm} 

\theoremstyle{plain}
\newtheorem{Thm}{Theorem}[section]
\newtheorem{Prop}[Thm]{Proposition}
\newtheorem{Lem}[Thm]{Lemma}
\newtheorem{Cor}[Thm]{Corollary}

\theoremstyle{definition}
\newtheorem{Def}[Thm]{Definition}

\theoremstyle{remark}
\newtheorem{Rem}[Thm]{Remark}

\DeclareMathOperator{\rank}{rank}

\DeclareMathOperator{\coker}{coker}

\DeclareMathOperator{\Tr}{Tr}
\DeclareMathOperator{\Hom}{Hom}

\DeclareMathOperator{\End}{End}

\DeclareMathOperator{\Spec}{Spec}
\DeclareMathOperator{\length}{length}

\numberwithin{equation}{section}

\newcommand{\blfootnote}[1]{%
  \begingroup
  \renewcommand{\thefootnote}{}
  \footnotetext{#1}
  \endgroup
}

\title{Moduli space of irregular singular $\lambda$-connections
and the relative symplectic form}

\author[1]{Michi-aki Inaba\thanks{E-mail: inaba@cc.nara-wu.ac.jp}}
\author[2]{Arata Komyo\thanks{E-mail: akomyo@sci.u-hyogo.ac.jp}}

\affil[1]{Faculty of Science, Nara Women's University}
\affil[2]{Graduate School of Science, University of Hyogo}

\date{}

\begin{document}

\maketitle

\blfootnote{2020 \emph{Mathematics Subject Classification}. Primary 14D20; Secondary 14H60, 53D30.}
\blfootnote{\emph{Key words and phrases}. irregular $\lambda$-connection, Hodge moduli space, ramified irregular singularities, relative symplectic forms.}

\begin{abstract}
We study the moduli space of $\lambda$-connections 
with ramified singularities on a smooth projective curve $X$. 
We construct a quasi-projective relative moduli scheme $M^{\alpha}_{\mathrm{Hod}} \to \Lambda$ 
of $\alpha$-stable $\lambda$-connections over a base parameter space $\Lambda$. 
This moduli space connects the irregular de Rham moduli space ($\lambda=1$) 
and the irregular Dolbeault moduli space of Higgs bundles equipped with non-zero irregular local exponents ($\lambda=0$).
Furthermore, we show that $M^{\alpha}_{\mathrm{Hod}}$ is smooth over $\Lambda$ 
and carries a canonical relative symplectic form $\omega_{M^{\alpha}_{\mathrm{Hod}}/\Lambda}$, 
yielding a natural Poisson structure on the total space.
\end{abstract}

\section{Introduction}

\subsection{Background}

In non-abelian Hodge theory, Simpson introduced a comprehensive framework incorporating three fundamental moduli spaces: the \textit{de Rham} (connections), \textit{Dolbeault} (Higgs bundles), and \textit{Betti} (character varieties) moduli spaces, endowing each with the structure of an algebraic variety \cite{Simp-I, Simp-II, Simp-III}. 
Originally motivated by the construction of Hitchin's twistor space \cite{Hitchin92}
and following a suggestion by Deligne,
Simpson introduced the notion of $\lambda$-connections in \cite{Simp-IV}. 
Let $X$ be a smooth complex projective curve, and let $D$ be an effective divisor on $X$. 
For a vector bundle $E$ on $X$ and a complex parameter $\lambda \in \mathbb{C}$, a $\mathbb{C}$-linear homomorphism 
$
  \nabla \colon E \rightarrow E \otimes \Omega^1_X(D)
$
is called a \textit{$\lambda$-connection} if it satisfies
\[
  \nabla(fa) = \lambda a \otimes \mathrm{d}f  + f\nabla(a)
\]
for all $f \in \mathcal{O}_X$ and $a \in E$. 
When $\lambda \neq 0$, the scaled map $\lambda^{-1}\nabla$ is a connection,
whereas for $\lambda=0$, a $0$-connection is a Higgs field. 
Thus, $\lambda$-connections provide a unified framework to handle connections and Higgs bundles simultaneously. Consequently, the moduli space of $\lambda$-connections, known as the \textit{Hodge moduli space}, provides an algebraic deformation connecting the de Rham and Dolbeault moduli spaces. 
By virtue of Simpson's work \cite{Simp-IV}, a quotient by the canonical $\mathbb{C}^*$-action on the Hodge moduli space induces an algebraic compactification of the de Rham moduli space, which is useful for studying the algebraic structures
of both the de Rham and Dolbeault moduli spaces.

The above framework of holomorphic $\lambda$-connections 
can be extended to meromorphic cases. 
When $D$ is reduced, 
Simpson established the fundamental correspondence 
among the three objects (de Rham, Dolbeault, and Betti) in \cite{Simp-0}. 
The algebraic construction of logarithmic Dolbeault moduli spaces was established in \cite{Nits-91, Simp-II, Simp-III}, 
and its parabolic counterpart was obtained in \cite{yoko-1} in a broader setting that accommodates non-reduced divisors. 
Similarly, the algebraic construction of logarithmic de Rham moduli spaces was established in \cite{Nits-93, Simp-II, Simp-III}, 
while the parabolic version was achieved in \cite{Inaba-Iwasaki-Saito1, Inaba1}, 
where the Riemann--Hilbert map was also investigated, leading to applications in the study of isomonodromic deformations. 
From a differential-geometric viewpoint, the logarithmic de Rham and Dolbeault moduli spaces were constructed in \cite{Konno-93, Nakaj-94}. 
The algebraic Betti moduli space was constructed in \cite{Yam}, refining the Riemann--Hilbert correspondence in \cite{Inaba-Iwasaki-Saito1, Inaba1}. 
The moduli space of logarithmic $\lambda$-connections (Hodge moduli space) was constructed algebraically in \cite{Alfaya-Gomez}, with applications to a Torelli-type theorem for the Deligne--Hitchin moduli space. 
In \cite{BIKS}, the method of $\lambda$-connections was also utilized to investigate the algebraic structure of the logarithmic de Rham moduli space.

When the polar divisor $D$ is nonreduced,
the Hukuhara--Levelt--Turrittin normal form provides a classification of formal connections
over $\mathbb{C}(\!(w)\!)$, where $w^l = z$ for a local coordinate $z$ at $x \in D$.
The singularity $x$ is called \emph{unramified} if $l = 1$
and \emph{ramified} if $l > 1$. 
An important principle for formulating the moduli problem is that 
formal types should determine the classification over the formal power series ring 
rather than over the field of formal Laurent series.
Indeed, there is a substantial gap between the classifications
over the formal power series ring and over the field of formal Laurent series.
In the generic unramified case, this difference lies
only in the integral parts of the residues.
A precise construction of the moduli space requires
replacing the formal type of a connection
with the structure of its restriction to the polar divisor.
In the generic unramified case, the formal type can be uniquely recovered
from its restriction to the polar divisor.
Based on this crucial consequence, the algebraic de Rham moduli space 
and its symplectic structure were constructed in \cite{Inaba-Saito-13}
in the generic unramified case. 
Through a careful argument, a similar consequence holds in the generic ramified case, 
based on which the algebraic de Rham moduli space 
and its symplectic structure were constructed in \cite{Inaba3, Inaba4}.
For the corresponding Betti moduli spaces,
see \cite{Boalch-1, Boalch-2, Put-Saito}.
A differential-geometric construction of the Dolbeault and de Rham moduli spaces
was provided in \cite{Biquard-Boalch}, which established
the non-abelian Hodge correspondence in the generic unramified case.
See \cite{Mochi11} for a comprehensive generalization
of the non-abelian Hodge correspondence on higher-dimensional varieties.

In this paper, we construct the moduli space of $\lambda$-connections
allowing ramified singularities,
which connects the irregular de Rham moduli space
and its Dolbeault counterpart.

\subsection{Main results}

Let $X$ be a smooth projective algebraic curve of genus $g$ over $\mathbb{C}$. Take an effective divisor $D$ on $X$ and write
\[
  D = \sum_{x \in D} m_x x.
\]
At each point $x \in D$, choose a local generator $z$ of the maximal ideal of $\mathcal{O}_{X,x}$. 
Fix integers $r, d$ with $r \geq 2$.
For each $x\in D$, take a partition $r = r^x_1 + \cdots + r^x_{s_x}$
where $r^x_k > 0$ for all $1 \leq k \leq s_x$. 
If $m_x=1$,
we further assume that $r^x_1=\cdots=r^x_{s_x}=1$.
Set
\[
  \mathcal{A}^x_i := \mathcal{O}_{m_x x}[w] / (w^{r^x_i} - z)
\]
for $1 \leq i \leq s_x$.

The local invariants of $\lambda$-connections associated with each pole $x \in D$ are defined as follows:
\begin{Def} \label {def: generic ramified local exponent data intro}
Take polynomials
\[
a^{x,i}_0(T), a^{x,i}_1(T), \ldots, a^{x,i}_{r^x_i-1}(T) \in \mathbb{C}[T]
\]
such that $\deg a^{x,i}_0(T) \leq r^x_i-1$ and $\deg a^{x,i}_k(T) \leq r^x_i-2$ for $1 \leq k \leq r^x_i-1$.
Set 
\[
\begin{aligned} 
\nu^{x,i}_k(z) & := a^{x,i}_k(z)\frac{\mathrm{d}z}{z^{m_x}} 
\in \Omega^1_X(D)|_{m_x x} \quad (0 \leq k \leq r^x_i-1),  \\
\nu^{x,i}(w) & := \nu^{x,i}_0(z) + \nu^{x,i}_1(z)w + \cdots + \nu^{x,i}_{r^x_i-1}(z)w^{r^x_i-1}
\in \mathcal{A}^x_i \otimes \Omega^1_X(D)|_{m_x x}. 
\end{aligned}
\]
For a given $\lambda \in \mathbb{C}$,
we say that the data $\nu = (\nu^{x,i}(w))_{x \in D, \, 1 \leq i \leq s_x}$
is a \emph{generic ramified local exponent data} if the following conditions hold:
\begin{enumerate}
\item[\textup{(i)}] $a^{x,1}_0(0), \ldots, a^{x,s_x}_0(0)$ are mutually distinct for $x \in D$ with $m_x \geq 2$,
\item[\textup{(ii)}] $a^{x,i}_1(0) \neq 0$ holds for all $x \in D$ and $1 \le i \le s_x$ with $r^x_i \ge 2$, and
\item[\textup{(iii)}] $\lambda d + \sum_{x \in D} \sum_{i=1}^{s_x} r^x_i \operatorname{Res}_x(\nu^{x,i}_0(z)) = 0$ 
(the Fuchs relation).
\end{enumerate}
\end{Def}

Denote $\mathcal{A} := (\mathcal{A}^x_i)_{x \in D, \, 1 \leq i \leq s_x}$ and define
\[
  \Lambda := \left\{ (\lambda, \nu) \;\middle|\;
  \begin{array}{l}
    \lambda \in \mathbb{C}, \\ 
    \nu \text{ is a generic ramified local exponent data for $\lambda$}
  \end{array}
  \right\}.
\]
Let $E$ be an algebraic vector bundle on $X$ of rank $r$ and let $\nabla\colon E\rightarrow E\otimes\Omega^1_X(D)$ be a $\lambda$-connection.
For $(\lambda, \nu) \in \Lambda$,
we introduce the notion of a parabolic $(\nu, \mathcal{A})$-frame, which characterizes the formal structure of $(E,\nabla)$ at $D$.

\begin{Def} \label{def parabolic frame in Intro}
We say that $\{l^x_i, L^x_i\}$ is a \emph{parabolic $(\nu, \mathcal{A})$-frame} for a $\lambda$-connection $(E, \nabla)$ if:
\begin{enumerate}
  \item[\textup{(i)}] $E|_{m_x x} = l^x_1 \supset \cdots \supset l^x_{s_x} \supset l^x_{s_x+1} = 0$ is a filtration by $\mathcal{O}_{m_x x}$-submodules such that $l^x_i / l^x_{i+1} \cong \mathcal{O}_{m_x x}^{\oplus r^x_i}$ and $\nabla|_{m_x x}(l^x_i) \subset l^x_i \otimes \Omega^1_X(D)|_{m_x x}$ for each $1 \leq i \leq s_x$;
  \item[\textup{(ii)}] $L^x_i$ is an $\mathcal{A}^x_i$-submodule of $\operatorname{Hom}_{\mathcal{O}_{m_x x}}(\mathcal{A}^x_i, l^x_i / l^x_{i+1})$ satisfying:
  \begin{enumerate}
    \item[\textup{(ii-1)}] $L^x_i$ is a free $\mathcal{A}^x_i$-module of rank one,
    \item[\textup{(ii-2)}] a generator $\sigma^{x,i}$ of $L^x_i$ is an isomorphism $\sigma^{x,i} \colon \mathcal{A}^x_i \xrightarrow{\,\sim\,} l^x_i / l^x_{i+1}$ of $\mathcal{O}_{m_x x}$-modules, and
    \item[\textup{(ii-3)}] the equality
    \[
      ((\sigma^{x,i})^{-1} \otimes \mathrm{id}_{\Omega^1_X(D)|_{m_x x}}) \circ \overline{\nabla}^x_i \circ \sigma^{x,i} = \nu^{x,i}(w) \, \mathrm{id}_{\mathcal{A}^x_i}
    \]
    holds, where $\overline{\nabla}^x_i \colon l^x_i / l^x_{i+1} \longrightarrow l^x_i / l^x_{i+1} \otimes \Omega^1_X(D)|_{m_x x}$ is the induced homomorphism.
  \end{enumerate}
We call this $\mathcal{A}^x_i$-submodule $L^x_i$ a \emph{$(\nu^{x,i}(w), \mathcal{A}^x_i)$-frame} 
(see Definition \ref{def: (nu,A)-frame}).
\end{enumerate}
\end{Def}

The formal data is uniquely reconstructed from a parabolic $(\nu, \mathcal{A})$-frame, 
as explained in Remark~\ref{rem: formal data of parabolic framing structure} below. 
In Definition~\ref{def: stability of lambda connection} below, 
we will introduce the notion of $\alpha$-stability for $\lambda$-connections 
with parabolic weights $\alpha$.

Our first main result is the following:

\begin{Thm} \label{thm: main_moduli_construction}
There exists a relative coarse moduli scheme
\[
\pi\colon
  M^{\alpha}_{\mathrm{Hod}} \longrightarrow \Lambda
\]
of $\alpha$-stable $\lambda$-connections $(E, \nabla)$ on $X$ with a parabolic frame. 
Furthermore, $M^{\alpha}_{\mathrm{Hod}}$ is a quasi-projective scheme over $\Lambda$. 
Its fiber over $(1, \nu)$ is the moduli space $M^{\alpha}_{\mathrm{dR}}(\nu)$ of $\alpha$-stable connections 
with a parabolic $(\nu, \mathcal{A})$-frame, 
and its fiber over $(0, \mu)$ is the moduli space $M^{\alpha}_{\mathrm{Higgs}}(\mu)$ 
of $\alpha$-stable Higgs bundles with a parabolic $(\mu, \mathcal{A})$-frame.
\end{Thm}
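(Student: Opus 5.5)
We will construct $M^{\alpha}_{\mathrm{Hod}}$ as a GIT quotient, in the style of \cite{Inaba-Saito-13, Inaba3} combined with the $\lambda$-connection construction of \cite{Simp-IV, Alfaya-Gomez}.

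\textbf{Step 1: boundedness.} We first show that the family of underlying bundles $E$ of $\alpha$-stable $\lambda$-connections with a parabolic $(\nu,\mathcal{A})$-frame, for $(\lambda,\nu)$ in $\Lambda$, is bounded. Fixing rank and degree, we bound the slopes of $\nabla$-invariant subsheaves using $\alpha$-stability, and bound the Harder--Narasimhan type of $E$ by the usual argument: $\nabla$ induces $\mathcal{O}_X$-linear maps between graded pieces with values in $\Omega^1_X(D)$. This is uniform in $\lambda$, since for $\lambda=0$ it is the Higgs case and the second fundamental form argument works for any $\lambda$. Hence there is an integer $m_0$ such that, for all $m\ge m_0$, $E(m)$ is generated by global sections and $H^1(E(m))=0$.

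\textbf{Step 2: parameter scheme and quotient.} Let $N=\chi(E(m))$. We take the Quot scheme $Q$ of quotients $\mathcal{O}_X(-m)^{N}\to E$, restricted to the open locus where $H^0$ gives an isomorphism. Over $Q\times\Lambda$ we build a scheme $R$ parametrizing three pieces of data:
\begin{itemize}[label={}]
\item (a) a $\lambda$-connection $\nabla$ on the universal bundle. This condition is representable by an affine bundle over a locally closed subscheme, since the $\lambda$-connections on $E$ form a torsor under $\Hom(E,E\otimes\Omega^1_X(D))$ twisted by $\lambda$, or equivalently first-order differential operators whose symbol is $\lambda$;
\item (b) filtrations $l^x_i$ of $E|_{m_xx}$ by $\mathcal{O}_{m_xx}$-modules with free graded pieces, parametrized by a relative flag-type scheme (a closed subscheme of a product of Quot schemes over $m_xx$), with the closed condition that they be $\nabla|_{m_xx}$-invariant;
\item (c) the frames $L^x_i$. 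These are parametrized as follows: the generators $\sigma^{x,i}$ form an open subset of $\Hom_{\mathcal{O}_{m_xx}}(\mathcal{A}^x_i,l^x_i/l^x_{i+1})$, and $L^x_i$ is the orbit under $(\mathcal{A}^x_i)^\times$. Condition (ii-3) is a closed condition. Taking the quotient by the free action of $(\mathcal{A}^x_i)^\times$ gives a scheme, a quotient by a free action of a smooth affine group.
\end{itemize}
Then $GL_N$ acts on $R$. We realize the stability condition through an embedding into a Grassmannian, as in \cite{Inaba-Saito-13}, linearized using the weights $\alpha$. We then show that $\alpha$-stable points are exactly GIT-stable points and that $GL_N$ acts with stabilizers $\mathbb{C}^*$ there. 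The geometric quotient $R^{s}/GL_N$ is $M^{\alpha}_{\mathrm{Hod}}$, quasi-projective over $\Lambda$, and it is a coarse moduli scheme by the standard universal-property argument.

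\textbf{Step 3: fibers.} The fibers are identified by base change. Since a geometric quotient of stable points is universal under base change in characteristic zero when restricted to closed subschemes of $\Lambda$, the fiber over $(1,\nu)$ is the construction of \cite{Inaba3, Inaba4}, namely $M^{\alpha}_{\mathrm{dR}}(\nu)$. The fiber over $(0,\mu)$ is the analogous Higgs moduli space $M^{\alpha}_{\mathrm{Higgs}}(\mu)$.

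\textbf{Main obstacle.} The hardest step is the GIT comparison in Step 2, uniformly in $\lambda$. We must show that the linearization encoding $\alpha$ detects destabilizing $\nabla$-invariant subsheaves, including those compatible with the frames at the ramified points. The $\lambda=0$ locus adds no new non-compactness issue because properness is not claimed. We would follow the \cite{Inaba-Saito-13} argument, replacing $\nabla$ by a $\lambda$-connection throughout, and check that each estimate uses only $\mathcal{O}_X$-linearity of induced maps on subquotients.
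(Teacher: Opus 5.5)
Your route is viable but genuinely different from the paper's. The paper does no new GIT. It takes the moduli of parabolic $\tilde{\Lambda}^1_{D/S}$-triples of Theorem~\ref{Thm: moduli of Lambda-tuples}. This is the Inaba--Iwasaki--Saito construction with $\Lambda^1_D$ replaced by the $t$-twisted bimodule $\mathcal{O}_X\oplus(\Omega^1_{X/S}(D))^{\vee}$, so boundedness and uniformity in $\lambda$ are inherited, and the result is projective over $\Lambda$. The paper then matches $\alpha$-stability with $(\alpha',\beta,\gamma)$-stability of $(E,E,\Phi,F_*(E))$, where $F_*(E)$ records both the $l^x_i$ and the refined flags \eqref{definition of extended filtration}. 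The frame is traded for a framing structure (Proposition~\ref{Prop: framing-frame correspondence}). The filtrations $N^{x,i}_j$ are points of a flag bundle projective over the triple moduli, and conditions (i)--(iv) of Definition~\ref{def: framing structure} are locally closed. Hence $M^{\alpha}_{\mathrm{Hod}}$ is a locally closed subscheme of a projective $\Lambda$-scheme.

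Your direct construction must redo boundedness and the stability comparison, which you defer. Two further points need repair.

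First, ``a free action of a smooth affine group has a scheme quotient'' is not a valid general principle. What actually makes your step (c) work is that a $(\nu^{x,i}(w),\mathcal{A}^x_i)$-frame is unique when it exists. For two generators, $\sigma^{-1}\sigma'$ commutes with $a(w)$, hence with $w$ (as $a_1(0)\neq 0$), hence lies in $(\mathcal{A}^x_i)^{\times}$; this is the argument in the proof of Proposition~\ref{Prop: rephrase to frame}. So the frame is a condition, not data. The frame locus is the locally closed subscheme where:
\begin{itemize}
\item the endomorphism $R$ of \eqref{restriction of traceless connection} on $l^x_i/l^x_{i+1}$ has a cyclic vector over $\mathcal{O}_{m_xx}$ (an open condition), and
\item $R$ has the same characteristic polynomial as multiplication by $a(w)-a_0(z)$ on $\mathcal{A}^x_i$ (a closed condition).
\end{itemize}
Your $\sigma$-space is then an $(\mathcal{A}^x_i)^{\times}$-torsor over this locus.

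Second, Definition~\ref{def: stability of lambda connection} puts the weights $\alpha^{x,i}_k$ on flags built from $\overline{l}^{x,i}_k=z^{m_x-1}\sigma^{x,i}(w^k\mathcal{A}^x_i)$. These flags, not only the $l^x_i$, must therefore be part of the data carrying your linearization; otherwise your GIT stability is not the paper's $\alpha$-stability.

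With these fixes your approach works and bypasses framing structures entirely. The paper's route buys economy, since it needs no new GIT argument. It also buys a projective ambient space, which is what the authors want for their planned compactification.
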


Precisely, we use the \emph{framing structure} (see Definition~\ref{def: framing structure}) 
to define $\alpha$-stability for the moduli space $M^{\alpha}_{\mathrm{Hod}}$ 
in Theorem~\ref{thm: main_moduli_construction}. 
In this paper, we mainly discuss the formulation of a framing structure for $\lambda$-connections, 
where the parabolic frame in Definition~\ref{def parabolic frame in Intro} appears as its normalization.

The notion of framing structure is introduced from the authors' primary motivation 
to seek a moduli-theoretic compactification 
of the de Rham moduli spaces corresponding to the Painlev\'e equations. 
In this specific context, $X = \mathbb{P}^1$, $r = 2$, and $\deg D = 4$. 
For reduced $D$, such a compactification was achieved in \cite{Inaba-Iwasaki-Saito2} 
by introducing the notion of a \emph{$\phi$-connection}. 
This compactification is isomorphic to the canonical smooth compactification of the space of initial conditions 
for the Painlev\'e VI equation, constructed by Okamoto \cite{Okamo-79}, 
and it provides a resolution of singularities of Simpson's compactification 
of the de Rham moduli space using $\lambda$-connections in \cite{Simp-IV}. 
For non-reduced $D$, we aim to establish a similar compactification in future work 
by introducing the notion of a \emph{framing structure for $\phi$-connections}; 
the resulting moduli spaces are expected to cover the rational surfaces in \cite{Saito-Takebe-Terajima, Sakai}, 
which completely determine the Painlev\'e equations through their geometry.

For rank~$2$ connections over $\mathbb{P}^1$ with ramified singularities, 
Diarra--Loray \cite{Diarra-Loray} explicitly describe
a Zariski open subset of the de Rham moduli space
by specifying the condition of formal type,
which is essentially equivalent to the notion of
{\it parabolic framing structure} (see Definition \ref{def: parabolic framing structure}). 
Over this Zariski open subset, Komyo \cite{Komyo} studies the symplectic structure 
and explicitly calculates the Hamiltonians for generalized isomonodromic deformations.

The existence of the moduli spaces appearing as the fibers of $\pi$ 
in Theorem~\ref{thm: main_moduli_construction} is already known: 
for $\lambda \neq 0$, they were constructed in \cite{Inaba3}, 
and for $\lambda = 0$, the existence follows from \cite{yoko-1}. 
A remarkable point in the formulation of Theorem~\ref{thm: main_moduli_construction} is 
that the local exponents in the fiber over $\lambda = 0$ need not be zero, 
in contrast to other literature (e.g., \cite{Alfaya-Gomez}). 
This formulation has an advantage, as it naturally fits into the non-abelian Hodge correspondence 
in a general setting (see \cite{Simp-V}).

Our second main theorem is stated as follows:

\begin{Thm} \label {Thm: main theorem}
The following statements hold:
\begin{enumerate}
\item[\textup{(1)}]
The moduli space $M^{\alpha}_{\mathrm{Hod}}$ of
$\alpha$-stable $\lambda$-connections with a parabolic frame
is smooth over $\Lambda$.
\item[\textup{(2)}]
The dimension of the fiber $M^{\alpha}_{\mathrm{Hod}}(\lambda,\nu)$
of $M^{\alpha}_{\mathrm{Hod}}$ over each point
$(\lambda,\nu)$ of $\Lambda$
is
\[
2r^2(g-1)+2+r(r-1) \deg (D).
\]

\item[\textup{(3)}]
There is a relative symplectic form
$\omega_{M^{\alpha}_{\mathrm{Hod}}/\Lambda}
\in
H^0(M^{\alpha}_{\mathrm{Hod}},
\Omega^2_{M^{\alpha}_{\mathrm{Hod}}/\Lambda})$
on the Hodge moduli space
$M^{\alpha}_{\mathrm{Hod}}$.
\end{enumerate}
\end{Thm}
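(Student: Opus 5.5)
The plan is to run the deformation theory of $\lambda$-connections with a parabolic frame through a two-term complex on $X$, following the strategy of \cite{Inaba3, Inaba-Saito-13} but relative over $\Lambda$. For a point $(E,\nabla,\{l^x_i,L^x_i\})$ over $(\lambda,\nu)$, I will define the sheaf $\mathcal{F}^0 \subset \mathcal{E}nd(E)$ of endomorphisms $u$ that preserve the filtrations $l^x_*$ and whose induced maps on $l^x_i/l^x_{i+1}$ are $\mathcal{A}^x_i$-linear, that is, given by multiplication by an element of $\mathcal{A}^x_i$ via $\sigma^{x,i}$. I will define $\mathcal{F}^1 \subset \mathcal{E}nd(E)\otimes\Omega^1_X(D)$ as the sheaf of those $v$ preserving $l^x_*$ whose graded pieces act as zero on each $l^x_i/l^x_{i+1}$. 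The differential is $u\mapsto \nabla u - u\nabla$, which is $\mathcal{O}_X$-linear because the $\lambda$-terms cancel. Since $\nu^{x,i}(w)\,\mathrm{id}$ commutes with $\mathcal{A}^x_i$-linear maps, the differential lands in $\mathcal{F}^1$. The first step is to show that the fiberwise tangent space is $\mathbf{H}^1(\mathcal{F}^\bullet)$ and that obstructions lie in $\mathbf{H}^2(\mathcal{F}^\bullet)$. This goes by the standard Čech argument: lift $E$, $\nabla$ and the frames locally over a small extension $A'\to A$ of Artinian $\Lambda$-algebras with $(\lambda,\nu)$ given in $A'$, and compare the lifts on overlaps. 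Near $x\in D$, the genericity conditions (i) and (ii) of Definition \ref{def: generic ramified local exponent data intro} guarantee that the filtration and the frame lift uniquely once $\nabla$ is lifted locally, so the local lifting is unobstructed.

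For smoothness in (1), I will show $\mathbf{H}^2(\mathcal{F}^\bullet)=0$. Using the perfect pairing $\mathcal{F}^0\otimes\mathcal{F}^1\to\Omega^1_X$ given by $(u,v)\mapsto \Tr(uv)$, I expect $\mathcal{F}^1\cong(\mathcal{F}^0)^\vee\otimes\Omega^1_X$. Verifying this duality is a local computation in $\mathcal{A}^x_i$, where the dual of $\mathcal{A}^x_i$ under the trace-residue pairing is again $\mathcal{A}^x_i$. Serre duality for hypercohomology then gives $\mathbf{H}^2(\mathcal{F}^\bullet)\cong\mathbf{H}^0(\mathcal{F}^\bullet)^\vee$. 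By $\alpha$-stability, endomorphisms compatible with $\nabla$ and the frames are scalars, and these are killed by the trace-free correction coming from the determinant or Fuchs relation, so the relevant $\mathbf{H}^2$ is at most one-dimensional. I will handle this as in \cite{Inaba-Saito-13}: either restrict to trace-free parts, or show that the trace map $\mathbf{H}^2(\mathcal{F}^\bullet)\to H^1(\Omega^1_X)\cong\mathbb{C}$ carries the obstruction class to the obstruction for deforming $(\det E,\Tr\nabla)$. That latter obstruction vanishes precisely because condition (iii) is imposed on $\Lambda$. This step, obstruction vanishing uniformly including at $\lambda=0$, is the one I expect to be the main obstacle. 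Its difficulty is checking that the frame conditions do not create extra local obstructions at ramified points when $\lambda$ degenerates, and I would attack it by an explicit local normal form over $\mathcal{A}^x_i$ that is valid for all $\lambda$.

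Given smoothness, (2) follows from an Euler characteristic computation: $\dim \mathbf{H}^1 = \dim\mathbf{H}^0+\dim\mathbf{H}^2-\chi(\mathcal{F}^0)+\chi(\mathcal{F}^1)$, with $\dim\mathbf{H}^0=\dim\mathbf{H}^2=1$. By duality $\chi(\mathcal{F}^1)=-\chi(\mathcal{F}^0)$. Then $\chi(\mathcal{F}^0)=r^2(1-g)-\length(\mathcal{E}nd(E)|_D/\mathcal{F}^0|_D)$, and the colength at $x$ is $m_x(r^2-\sum_i r^x_i)-\sum_i(\text{frame corrections})$, which should total $\tfrac{r(r-1)}{2}m_x$. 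This yields $2r^2(g-1)+2+r(r-1)\deg D$.

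For (3), I will define $\omega$ on $\mathbf{H}^1(\mathcal{F}^\bullet)$ in Čech terms by $([u_{ij}],[v_i])\cdot([u'_{ij}],[v'_i]) \mapsto [\Tr(u_{ij}u'_{jk})]-[\Tr(u_{ij}v'_j)-\Tr(v_i u'_{ij})]\in \mathbf{H}^2(\Omega^\bullet_X)\cong\mathbb{C}$, formulated on the universal family relatively over $\Lambda$; where no universal family exists, I will use étale-local families and descend. Nondegeneracy follows from the duality of the first step. For closedness, I will reduce, as in \cite{Inaba3}, to $\lambda\neq0$ fibers, where $\omega$ agrees with the known closed form. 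Since $d_{M/\Lambda}\omega$ is a section of the locally free sheaf $\Omega^3_{M/\Lambda}$ on a scheme that is smooth over $\Lambda$, and $\{\lambda\neq0\}$ is dense in $\Lambda$, it vanishes wherever it vanishes on the dense open part. This requires that $M$ be flat, which is given by smoothness, so $\pi^{-1}(\lambda\ne0)$ is dense and $d\omega=0$ everywhere.
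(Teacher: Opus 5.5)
The gap is the duality claim that your proofs of (1), (2) and the nondegeneracy in (3) all depend on, and it fails at every ramified point. With your $\mathcal{F}^0$ (graded pieces in $\mathcal{A}^x_i$) and $\mathcal{F}^1$ (graded pieces zero), the trace pairing $\mathcal{F}^0\otimes\mathcal{F}^1\to\Omega^1_X$ is well defined. It is not perfect once some $r^x_i\geq 2$.

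\begin{itemize}
\item The annihilator of $\mathcal{F}^1$ is the full parabolic sheaf $\mathcal{G}^0$.
\item The annihilator of $\mathcal{F}^0$ consists of those $v\in\mathcal{G}^1$ whose graded pieces lie in $(\mathcal{A}^x_i)^{\perp}\otimes\Omega^1_X(D)|_{m_xx}$. Here $(\mathcal{A}^x_i)^{\perp}\subset\End(l^x_i/l^x_{i+1})$ is the trace-orthogonal of $\mathcal{A}^x_i$, of length $m_x r^x_i(r^x_i-1)$.
\item In fact $\deg\mathcal{F}^1$ and $\deg((\mathcal{F}^0)^\vee\otimes\Omega^1_X)$ differ by $\sum_x m_x\sum_i r^x_i(r^x_i-1)$, so no isomorphism exists at all.
\end{itemize}

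The self-duality of $\mathcal{A}^x_i$ under its own trace pairing is not the relevant fact. Consequently three of your steps are unjustified as written: $\mathbb{H}^2\cong(\mathbb{H}^0)^\vee$, $\chi(\mathcal{F}^1)=-\chi(\mathcal{F}^0)$, and nondegeneracy of $\omega$ ``from the duality of the first step''. In (2), the true colength of your $\mathcal{F}^0$ at $x$ is $\frac{r(r-1)}{2}m_x+\frac{m_x}{2}\sum_i r^x_i(r^x_i-1)$, not $\frac{r(r-1)}{2}m_x$. Also $\chi(\mathcal{F}^1)=-\chi(\mathcal{F}^0)-\sum_x m_x\sum_i r^x_i(r^x_i-1)$. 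Your final dimension is correct only because these two errors cancel.

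The missing idea is that $\mathcal{C}=[\mathcal{F}^0\to\mathcal{F}^1]$ is self-dual only up to quasi-isomorphism. Its Serre dual is $\mathcal{C}^\vee=[\mathcal{G}^0\to(\mathcal{F}^0)^\vee\otimes\Omega^1_X]$. The inclusion $\mathcal{C}\hookrightarrow\mathcal{C}^\vee$ has cokernel $\bigoplus_{x,i}\big[\End(l^x_i/l^x_{i+1})/\mathcal{A}^x_i\xrightarrow{\mathrm{ad}\,\overline{\nabla}^x_i}(\mathcal{A}^x_i)^{\perp}\otimes\Omega^1_X(D)|_{m_xx}\big]$. You need to prove this cokernel is acyclic:
\begin{itemize}
\item Since $a^{x,i}_1(0)\neq 0$, $w$ is a polynomial in $\nu^{x,i}(w)-\nu^{x,i}_0(z)$. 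So the centralizer of $\overline{\nabla}^x_i$ is exactly $\mathcal{A}^x_i$, which gives injectivity.
\item Both sides have length $m_x r^x_i(r^x_i-1)$, which gives surjectivity.
\end{itemize}

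With this lemma, $\omega$ is nondegenerate. Moreover $\mathbb{H}^2(\mathcal{C})^\vee\cong\mathbb{H}^0(\mathcal{C}^\vee)$ consists of horizontal endomorphisms preserving $l^x_*$. These are $\mathcal{A}^x_i$-linear on graded pieces by the same centralizer argument, hence scalar by stability. The rest of your argument for (1) and (2) then goes through.

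The paper handles this differently. It enlarges the complex using the symmetric factorization $w^x_i=\theta^x_i\circ\kappa^x_i$, adding the terms $A^0$, $\overline{S\,}^2(E|_D)$, $\overline{S\,}^2(E|_D^{\vee})$ and $A^2$. This makes the two halves $\mathcal{F}_0^\bullet$ and $\mathcal{F}_1^\bullet$ genuinely dual, so the five lemma applies. That complex also deforms uniformly in the unfolding family $\mathcal{A}^x_{i,t}$, which is how the paper proves $d$-closedness by reduction to the unramified case. Your alternative reduction to $\lambda\neq 0$ via density is valid in itself. However, it additionally requires identifying your $\omega$ with the form in the ramified de Rham literature, which is not automatic since the complexes differ.
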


It is worth mentioning that the relative symplectic form
in Theorem~\ref{Thm: main theorem} yields a canonical Poisson structure
on the total space $M_{\mathrm{Hod}}^{\alpha}$
(Corollary~\ref{Cor: Poisson structure}).
There are several studies on the symplectic or Poisson structure
on the moduli space of Higgs bundles
(\cite{Mark-94, Botta-95}).
The restriction of the Poisson structure on $M_{\mathrm{Hod}}^{\alpha}$ to each Hitchin fiber
coincides with or enlarges
the known Poisson structure on the moduli space of Higgs bundles.

By extending the base space $\Lambda$ to the moduli space of 
pairs $(X,D)$ consisting of an algebraic curve $X$ and a divisor $D$ 
equipped with the parameters in $\Lambda$, 
one can construct an isomonodromic family on the locus where $\lambda \neq 0$ in the Hodge moduli space. 
Furthermore, the relative symplectic form $\omega_{M^{\alpha}_{\mathrm{Hod}}/\Lambda}$ 
in Theorem~\ref{Thm: main theorem}~(3) can be extended 
to an isomonodromic closed $2$-form on this locus, as in \cite[Corollary 11.8]{Inaba4}. 

An important future direction would be to explore the behavior of these isomonodromic deformations 
as $\lambda$ tends to $0$. 
This inquiry is motivated by the desire to understand the deep connections among 
isomonodromic deformations, exact WKB analysis, and non-abelian Hodge theory
(e.g.\ \cite{GMN13AiM, DM14, CW19, Iwaki25}).

\subsection{The organization of this paper}

In Section~\ref{sect: Framing str}, 
we study framing structures for $\lambda$-connections. 
Specifically, in Section~\ref{subsect framing str}, 
we introduce the notion of framing structures.
In Section~\ref{Subsect relation with ram conn}, 
we recall ramified structures from \cite{Inaba3}
and establish a bijective correspondence 
between ramified structures and framing structures. 
In Section~\ref{subsect nu A frame}, 
we discuss the normalization of framing structures, 
which leads to the notion of a parabolic frame (Definition~\ref{def parabolic frame in Intro}).

Section~\ref{sect: Moduli space} is devoted to the algebraic construction of the moduli space of $\lambda$-connections 
with generic ramified singularities, 
thereby establishing Theorem~\ref{thm: main_moduli_construction}.

Section~\ref{sect: Defor lambda conn} is dedicated to the proof of Theorem~\ref{Thm: main theorem}. 
Statements~(1) and (2) of Theorem~\ref{Thm: main theorem} are proved in Section~\ref{subsect relative tangent sp}, 
and Statement~(3) is proved in Section~\ref{subsect rel symp str}. Following the strategy in \cite{Inaba2}, 
we construct a complex $\mathcal{F}^\bullet$ via a linear endomorphism factorization associated 
with a parabolic frame. 
We show that its hypercohomology $\mathbb{H}^1(\mathcal{F}^\bullet)$ 
is isomorphic to the tangent space of a fiber of $M^{\alpha}_{\mathrm{Hod}} \to \Lambda$, 
and carries a natural, nondegenerate bilinear pairing $\omega_{\mathbb{H}^1(\mathcal{F}^\bullet)}$. 
This pairing yields the smoothness, dimension count, and relative symplectic structure of the moduli space. 
In particular, the closedness of the 2-form in Statement~(3) is established in Proposition~\ref{Prop: d-closed} 
via parameter unfolding into unramified irregular singularities, reducing the problem to the unramified case in \cite{Inaba2}.

\subsection*{Acknowledgments}
The authors were supported by JSPS KAKENHI Grant Numbers 22H00094, 
and the second author by Grant Number 24K06674.

\subsection*{Disclosure of AI use}
The authors would like to thank Gemini (Google) for assistance with English proofreading and stylistic improvements. 
The mathematical content, proofs, and conclusions in this paper are entirely the work of the authors, who take full responsibility for the final text.

\section{Framing structure} \label{sect: Framing str}

In this section, we introduce and study the notion of a framing structure
for a $\lambda$-connection.
It is equivalent to the notion of
a $(\nu,{\mathcal A})$-frame, forming the essential part of
Definition~\ref{def parabolic frame in Intro}.
For a connection, this notion is equivalent to the notion of a ramified structure given in \cite{Inaba3}.

\subsection{$(\nu,\boldsymbol{\delta})$-framing structure}
\label {subsect framing str}

Let $X$ be a smooth projective complex algebraic curve.
For $x\in X$, take a generator $z$ of the 
maximal ideal $\mathfrak{m}_x$ of the local ring ${\mathcal O}_{X,x}$
at $x$.
Then we have 
$\widehat{\mathcal O}_{X,x}\cong\mathbb{C}[[z]]$.
For an integer $m$ with $m\geq 2$, set
$\widehat{\Omega}(mx):=
\Omega^1_X(mx)\otimes\widehat{\mathcal O}_{X,x}$,
where $\Omega^1_X(mx)$ is the sheaf of rational differential
forms on $X$ which admits poles at $x$ of order at most $m$.
Take $\lambda\in\mathbb{C}$.
Let $\widehat{E}$ be a free $\widehat{\mathcal O}_{X,x}$-module
of rank $r$ on $X$ and let
$\widehat{\nabla}\colon \widehat{E}\rightarrow 
\widehat{E}\otimes\widehat{\Omega}(mx)$
be a formal $\lambda$-connection,
that is, a $\mathbb{C}$-linear map
satisfying $\widehat{\nabla}(fv)=\lambda v\otimes  \mathrm{d} f+f\widehat{\nabla}(v)$
for $f\in\widehat{\mathcal O}_{X,x}$ and $v\in\widehat{E}$.

Let $r$ be a positive integer. 
Set
\[
\mathcal{A} := \mathcal{O}_{m x}[w] / (w^{r} - z).
\]
Denote $\widehat{\Omega}(mx)|_{mx}$ by
$\overline{\Omega}(mx)$,
where  $mx=\Spec{\mathcal O}_{X,x}/\mathfrak{m}_x^m$.
Take polynomials
\[
\begin{cases}
a_0(T)=
a_{0,0}+a_{0,1}T+\cdots+a_{0,m-1}T^{m-1}
& (a_{0,i}\in\mathbb{C})
\\
a_k(T)=
a_{k,0}+a_{k,1}T+\cdots+a_{k,m-2}T^{m-2}
& (a_{k,i}\in\mathbb{C},\: 0\leq k\leq r-1)
\end{cases}
\]
in $T$ such that $a_1(0)=a_{1,0}\neq 0$.
Set
\begin{align*}
a(w)
&:=a_0(z)+a_1(z)w+\cdots+a_{r-1}(z)w^{r-1} \in \mathcal{A},  \\
\nu_k(z)
&:=a_k(z)\frac{\mathrm{d}z}{z^m}
\qquad (0\leq k\leq r-1), \ \ \text{and}
\\
\nu(w)
&:=
\nu_0(z)+\nu_1(z)w+\cdots+\nu_{r-1}(z)w^{r-1}=
a(w)\frac{\mathrm{d} z}{z^m} \in \mathcal{A} \otimes \overline{\Omega}(mx).
\end{align*}

We characterize a formal type of the
$\lambda$-connection $(\widehat{E},\widehat{\nabla})$
by considering its restriction $ (\widehat{E}|_{mx},\widehat{\nabla}|_{mx})$.
Here $\widehat{E}|_{mx}=\widehat{E}\otimes{\mathcal O}_{mx}
=\widehat{E}\otimes\mathbb{C}[z]/(z^m)$,
which is a free ${\mathcal O}_{mx}$-module of rank $r$,
and 
$\widehat{\nabla}|_{mx} \colon \overline{E}\rightarrow
\overline{E}\otimes\overline{\Omega}(mx)$
is the ${\mathcal O}_{mx}$-homomorphism
canonically induced  from $\widehat{\nabla}$.
To formulate this notion in a unified framework independent of $\lambda$ 
(whether it arises from a connection or a Higgs bundle), 
we treat this pair purely as an algebraic local data.
Take a tuple of complex numbers 
$\boldsymbol{\delta}=(\delta_0,\delta_1,\ldots,\delta_{r-1})$
satisfying 
\begin{equation}
\label {trace condition}
\delta_0+\delta_1+\cdots+\delta_{r-1}=0.
\end{equation}
Set
\[
\nu(w)^{(i)}_{\boldsymbol{\delta}}:=
\nu_0(z)+\nu_1(z)w+\cdots+\nu_{r-1}(z)w^{r-1}
+\delta_i\frac{\mathrm{d}z}{z},
\]
which induces a homomorphism 
$\mathcal{A} \rightarrow 
\mathcal{A} \otimes \overline{\Omega}(mx)$,
denoted by 
$\nu(w)_{\boldsymbol{\delta}}^{(i)}  \mathrm{id}_{\mathcal A}$.

\begin{Def} \label{def: framing structure}
Let $(\overline{E},\overline{\nabla})$ be a pair consisting of
a free ${\mathcal O}_{mx}$-module $\overline{E}$ of rank $r$
and a homomorphism
$\overline{\nabla}\colon\overline{E}\rightarrow\overline{E}\otimes\overline{\Omega}(mx)$
of $\mathcal{O}_{mx}$-modules.
Regard $\Hom_{{\mathcal O}_{mx}} ({\mathcal A}, \, \overline{E})$
as an ${\mathcal A}$-module via
$f(w)\cdot h=h\circ f(w)\mathrm{id}_{\mathcal A}$
for $f(w)\in{\mathcal A}$ and 
$h\in\Hom_{{\mathcal O}_{mx}} ({\mathcal A}, \, \overline{E})$.
For $i=0,1,\ldots,r-1$, define a homomorphism
\[
{\mathcal D}_{\overline{\nabla},\boldsymbol{\delta}}^{(i)}
\colon
\Hom_{{\mathcal O}_{mx}}
({\mathcal A},\overline{E})
\longrightarrow
\Hom_{{\mathcal O}_{mx}} ({\mathcal A},\overline{E}\otimes\overline{\Omega}(mx))
\]
by
\[
 {\mathcal D}_{\overline{\nabla},\boldsymbol{\delta}}^{(i)}(h)=
\overline{\nabla}\circ h-(h\otimes \mathrm{id}_{\overline{\Omega}(mx)})\circ
\nu(w)_{\boldsymbol{\delta}}^{(i)}  \mathrm{id}_{\mathcal A}
\]
for $h\in\Hom_{{\mathcal O}_{mx}}({\mathcal A},\overline{E})$.
A \emph{$(\nu(w),\boldsymbol{\delta})$-framing structure
$(\{N_i\},\{\overline{l}_i\})$ on $(\overline{E},\overline{\nabla})$}
consists of a filtration
\begin{equation}
\label {filtration on E|_{mx}}
 z^{m-1}\overline{E}=\overline{l}_0\supset \overline{l}_1
 \supset\cdots\supset \overline{l}_{r-1}\supset \overline{l}_r=0
\end{equation}
by ${\mathcal O}_{mx}$-submodules
and a filtration
\begin{equation}
\label {filtration on Hom space}
\Hom_{{\mathcal O}_{mx}} ({\mathcal A}, \, \overline{E})
\supset N_0\supset \cdots\supset N_{rm-1}\supset N_{rm}=0
\end{equation}
by ${\mathcal A}$-submodules
such that  the following hold:
\begin{itemize}
\item[{\rm (i)}] $\length \overline{l}_i=r-i$ for $0\leq i\leq r$,
$\length N_i=rm+r(r-1)/2-i$
for $0\leq i\leq rm-r$
and
$\length N_{rm-r+j}=r(r+1)/2-j(j+1)/2$
for $0\leq j\leq r$;
\item[{\rm (ii)}]
$wN_i\subset N_{i+1}$ for $0\leq i\leq rm-1$
and each $h\in N_{rm-r}$ satisfies
$h(w^k)\in \overline{l}_{k}$ for $0\leq k\leq r-1$;
\item[{\rm (iii)}]
${\mathcal D}_{\overline{\nabla},\boldsymbol{\delta}}^{(i)}(w^iN_0)\subset 
\Hom_{{\mathcal O}_{mx}}({\mathcal A},\overline{l}_{i+1})\otimes\overline{\Omega}(mx)$ 
holds for $ 0 \leq i \leq r-1$; and
\item[{\rm (iv)}]
$w^{rm-1}N_0\neq 0$.
\end{itemize}
\end{Def}

For the homomorphism $\overline{\nabla}$ 
in Definition~\ref{def: framing structure}, 
define a homomorphism $R\colon \overline{E}\rightarrow \overline{E}$ of $\mathcal{O}_{mx}$-modules 
by the equation
\begin{equation}
\label {restriction of traceless connection}
 R \otimes \frac{\mathrm{d}z}{z^m} = \overline{\nabla} - \nu_0(z) \cdot \mathrm{id}_{\overline{E}}.
\end{equation}

\begin{Lem} \label {Lem: alternative condition of FS}
Let $(\{N_i\},\{\overline{l}_i\})$ be a pair of filtrations
of the forms \eqref{filtration on E|_{mx}} and \eqref{filtration on Hom space}
in Definition \ref{def: framing structure}
which satisfies the conditions \textup{(i)}, \textup{(ii)}, \textup{(iii)} of Definition \ref{def: framing structure}
and the condition
$w^{2r-2}N_0\neq 0$
instead of \textup{(iv)}.
Then $(R|_{z^{m-1}\overline{E}})^r=0$ and
$\dim(\ker(R|_{z^{m-1}\overline{E}}))=1$ hold.
\end{Lem}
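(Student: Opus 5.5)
The space $V:=z^{m-1}\overline{E}=\overline{l}_0$ is an $r$-dimensional $\mathbb{C}$-vector space killed by $z$. Since $\overline{\nabla}$ is $\mathcal{O}_{mx}$-linear, $R$ preserves $V$. I would show that the filtration $V=\overline{l}_0\supset\overline{l}_1\supset\cdots\supset\overline{l}_r=0$ (with $\dim\overline{l}_i=r-i$ by (i)) is a full flag satisfying
\[
R(\overline{l}_i)\subset\overline{l}_{i+1}
\quad\text{and}\quad
R\colon\overline{l}_i/\overline{l}_{i+1}\xrightarrow{\ \sim\ }\overline{l}_{i+1}/\overline{l}_{i+2}
\qquad(0\leq i\leq r-2).
\]
The first property gives $(R|_V)^r=0$. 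Together the two properties say $R|_V$ is a single nilpotent Jordan block, so $\dim\ker(R|_V)=1$.

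\textbf{Step 1: Reduction of condition (iii) on elements landing in $V$.} Let $h\in\Hom_{\mathcal{O}_{mx}}(\mathcal{A},\overline{E})$ have image in $V$. Then $h\circ z=zh=0$, so $h$ factors through $\mathcal{A}/z\mathcal{A}=\mathbb{C}[w]/(w^r)$ and is determined by $v_k:=h(w^k)$ for $0\le k\le r-1$. In $h\circ\nu(w)^{(i)}_{\boldsymbol{\delta}}$ every term containing a positive power of $z$ is killed. This includes the term $\delta_i\,\mathrm{d}z/z=\delta_i z^{m-1}\,\mathrm{d}z/z^m$, because $m\geq 2$. Also $\nu_0(z)$ contributes $a_{0,0}$, which cancels against the $\nu_0$ built into $R$ by \eqref{restriction of traceless connection}, since $z^{m-1}\overline{E}$ is killed by $z$.

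Hence for $h\in w^iN_0$ with image in $V$, condition (iii) reads
\[
R\,h(w^k)-h\Bigl(\textstyle\sum_{j\geq1}a_{j,0}w^{j+k}\Bigr)\in\overline{l}_{i+1}\qquad(0\leq k\leq r-1).
\]
Condition (ii) says precisely that elements of $N_{rm-r}$ have image in $V$, with $h(w^k)\in\overline{l}_k$. Since $w^{rm-r}N_0\subset N_{rm-r}$, the reduction applies to $w^ih'$ whenever $h'\in N_{rm-r}$.

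\textbf{Step 2: Generating the flag and the hypothesis $w^{2r-2}N_0\neq0$.} I would show that $\overline{l}_i$ is spanned by the vectors $h(w^k)$ with $h\in N_{rm-r}$ and $k\geq i$. The length formulas in (i) say $N_{rm-r+j}/N_{rm-r+j+1}$ has length $r-j$, which should match $\dim\overline{l}_j$. I would use this, together with $wN_i\subset N_{i+1}$, to show that evaluation at $1$ identifies $N_{rm-r+j}/(\text{kernel})$ with $\overline{l}_j$.

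The hypothesis $w^{2r-2}N_0\neq0$ is used here to ensure the existence of $h\in N_{rm-r}$ with $w^{r-1}h\neq 0$, that is, $h(w^{r-1})\neq0$. I would obtain this by tracking how many steps of the $N$-filtration multiplication by $w$ can skip: $2r-2=(r-1)+(r-1)$ is exactly what is needed to pass through the triangular tail $N_{rm-r},\ldots,N_{rm}$. For such $h$, the vectors $v_k=h(w^k)$ should then form a basis of $V$ adapted to the flag, with $v_k\in\overline{l}_k\setminus\overline{l}_{k+1}$.

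\textbf{Step 3: The main obstacle.} Applying Step 1 to $w^ih$ with $k=0$ gives $Rv_i\equiv\sum_{j\ge1}a_{j,0}v_{i+j}\pmod{\overline{l}_{i+1}}$, hence $Rv_i\in\overline{l}_{i+1}$. With Step 2 this yields $R(\overline{l}_i)\subset\overline{l}_{i+1}$ and therefore nilpotency.

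The difficulty is the finer statement $Rv_i\equiv a_{1,0}v_{i+1}\pmod{\overline{l}_{i+2}}$, which with $a_{1,0}\neq0$ gives the isomorphisms on graded pieces. Condition (iii) applied to $w^ih$ only controls errors modulo $\overline{l}_{i+1}$, which is one step too coarse.

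My first attempt would be to apply (iii) to elements of $w^{i}N_0$ lying one step above $N_{rm-r}$, whose images are not contained in $V$. I would multiply the resulting relation by $z$, or equivalently compare it with $w^r=z$ times an element, to transport information down into $V$ while gaining one level in the flag. Failing that, I would argue by contradiction. If $R$ induced zero on some graded piece, then $\ker(R|_V)$ would have dimension at least $2$. Using the identity $Rv\equiv a_{1,0}\,(\text{shift of }v)$ coming from the $w$-action, this should force $w^{2r-2}N_0=0$, contradicting the hypothesis.
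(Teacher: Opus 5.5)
There is a genuine gap: the second assertion, $\dim\ker(R|_{z^{m-1}\overline{E}})=1$, is never proved. Your flag argument does give $(R|_{z^{m-1}\overline{E}})^r=0$, and this is a legitimate alternative to the paper's route. To make it work, you need $\{h(w^i)\mid h\in N_{rm-r}\}=\overline{l}_i$. The clean justification is that $N_{rm-r}$ has length $r(r+1)/2=\sum_k\dim\overline{l}_k$, so by (ii) it equals $\{h\mid h(w^k)\in\overline{l}_k\ \forall k\}$. Your bookkeeping in Step 2 is off here: $N_{rm-r+j}/N_{rm-r+j+1}$ has length $j+1$, not $r-j$.

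For the kernel statement, however, Step 3 only names the obstacle. The ``multiply by $z$'' idea is not carried out. The contradiction route presupposes the congruence $Rv\equiv a_{1,0}(\text{shift of }v)$, which is exactly what you could not establish. Moreover, your only use of $w^{2r-2}N_0\neq 0$ is in Step 2, where it is superfluous: since $\overline{l}_{r-1}\neq 0$, some $h\in N_{rm-r}$ with $h(w^{r-1})\neq 0$ exists anyway. So nothing in your plan actually exploits the hypothesis. The elements you work with, $w^ih$ with $h\in N_{rm-r}$, are too short: $w^{r-i}(w^ih)=zh=0$. Hence at the one level where (iii) is sharp, $i=r-1$, they only yield $R(\overline{l}_{r-1})=0$.

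The missing idea is this. Since $\overline{l}_r=0$, condition (iii) at $i=r-1$ is an exact identity: $\mathcal{D}^{(r-1)}_{\overline{\nabla},\boldsymbol{\delta}}(g)=0$ for every $g\in w^{r-1}N_0$. The hypothesis supplies a \emph{long} such $g$, which is not of the form $w^{r-1}h$ with $h\in N_{rm-r}$.

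\begin{itemize}
\item Choose $h'\in N_0$ with $w^{2r-2}h'\neq 0$ and $w^{2r-1}h'=0$ (replace a given element by a suitable $w$-multiple), and put $g:=w^{r-1}h'$.
\item Then $zg=w^{2r-1}h'=0$. So $g$ lands in $z^{m-1}\overline{E}$, and the term $\delta_{r-1}\,\mathrm{d}z/z=\delta_{r-1}z^{m-1}\,\mathrm{d}z/z^m$ dies since $m\geq 2$.
\item The identity therefore becomes $R\circ g=g\circ(a(w)-a_0(z))\mathrm{id}_{\mathcal A}$, hence $R^j(g(1))=g\big((a(w)-a_0(z))^j\big)$.
\item For $j=r$ this vanishes. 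For $j=r-1$ it equals $a_1(0)^{r-1}(w^{2r-2}h')(1)$, because all higher powers $w^n$ with $n\geq r$ are killed ($g(w^n)=(w^{n+r-1}h')(1)=0$).
\item This value is nonzero: $w^{2r-2}h'$ is annihilated by $w$, so it vanishes on $w{\mathcal A}$, and being nonzero it cannot vanish at $1$.
\end{itemize}

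Thus $g(1)$ is a cyclic vector for $R$ on the $r$-dimensional space $z^{m-1}\overline{E}$. Hence $R|_{z^{m-1}\overline{E}}$ is a single nilpotent Jordan block, which gives both assertions at once. This is the paper's proof, phrased there via the minimal polynomial $T^r$ of $R$ on the image $E'$ of ${\mathcal A}g\otimes{\mathcal A}$.
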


\begin{proof}
Take $h'\in N_0$ such that $w^{2r-2}h'\neq 0$
and $w^{2r-1}h'=0$.
Let $E'$ be the image of the composition
\[
{\mathcal A} w^{r-1}h'\otimes {\mathcal A}
\hookrightarrow
\Hom_{{\mathcal O}_{mx}}({\mathcal A},\overline{E})\otimes{\mathcal A}
\xrightarrow{\;\pi\;} \overline{E},
\]
where $\pi$ is defined by
$\pi(h\otimes a)=h(a)$
for $h\in\Hom_{{\mathcal O}_{mx}}({\mathcal A},\overline{E})$
and $a\in{\mathcal A}$.
Since $zw^{r-1}h'=w^{2r-1}h'=0$,
we have $zE'=0$, which means $E'\subset z^{m-1}\overline{E}$.
By Definition \ref {def: framing structure} (iii), we have
\[
0={\mathcal D}^{(r-1)}_{\overline{\nabla},\boldsymbol{\delta}}(w^{r-1}h')
=\overline{\nabla}\circ (w^{r-1}h')
-(w^{r-1}h')\circ\nu(w)\mathrm{id}_{\mathcal A},
\]
which is equivalent to
$
R\circ(w^{r-1}h')=w^{r-1}h'\circ(a(w)-a_0(z))\mathrm{id}_{\mathcal A}
$,
where $R$ is defined by \eqref {restriction of traceless connection}.
This implies
\[
R^j\circ(w^{r-1}h')=w^{r-1}h'\circ 
(a(w)-a_0(z))^j\mathrm{id}_{\mathcal A}
\]
for $j\geq 0$.
Recall that $z(w^{r-1}h')=w^{2r-1}h'=0$.
So we have
\[
R^r\circ (w^{r-1}h')=
w^{r-1}h'\circ (a(w)-a_0(z))^r\mathrm{id}_{\mathcal A}
=w^{r-1}h'\circ a_1(z)^rw^r\mathrm{id}_{\mathcal A}=0
\]
and
\[
R^{r-1}\circ (w^{r-1}h')
=
w^{r-1}h'\circ (a(w)-a_0(z))^{r-1}\mathrm{id}_{\mathcal A}
=
a_1(z)^{r-1}w^{2r-2}h'\neq 0
\]
because $w^{2r-2}h'\neq 0$ and $a_1(z)\in{\mathcal O}_{mx}^{\times}$.
From the definition of $E'$, we have
$R^{r-1}|_{E'}\neq 0$ and $R^r|_{E'}=0$.
So the minimal polynomial of $R|_{E'}$ becomes $T^r$.
Since the minimal polynomial divides the characteristic polynomial of $R|_{E'}$
whose degree is $\dim E'$,
we have
$
\dim E'\geq \deg T^r=r=\dim z^{m-1}\overline{E}
$.
Thus we have $E'=z^{m-1}\overline{E}$.
Since the minimal polynomial of $R|_{z^{m-1}\overline{E}}$ is $T^r$,
its eigenspace is one dimensional.
\end{proof}

\begin{Lem} \label {Lem: structure of hom space}
Let $(\{N_i\},\{\overline{l}_i\})$ satisfy the conditions~\textup{(i)}, \textup{(ii)}, and \textup{(iii)}
of Definition~\ref{def: framing structure} and the condition~\textup{(iv')} below instead of \textup{(iv)}:
\begin{itemize}
\item[\textup{(iv')}] $w^{2r-2}N_0\neq 0$ and 
$\ker\big(N_0\xrightarrow{w^{r-1}}N_0\big)=N_{rm-r+1}$.
\end{itemize}
Then we have
\[
N_0 \; \cong \;
\mathbb{C}[w]/(w^{rm})\oplus
\mathbb{C}[w]/(w^{r-1})\oplus\cdots\oplus\mathbb{C}[w]/(w)
\]
as an ${\mathcal A}$-module.
In particular, the condition \textup{(iv)} of Definition \ref{def: framing structure} holds.
\end{Lem}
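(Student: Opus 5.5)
The plan is to use the classification of finitely generated modules over a principal ideal ring. Since $z^m=0$ in $\mathcal{O}_{mx}$, we have $\mathcal{A}=\mathbb{C}[w]/(w^{rm})$. Hence $N_0\cong\bigoplus_j \mathbb{C}[w]/(w^{\lambda_j})$ for a partition $\lambda_1\geq\lambda_2\geq\cdots$ with $\lambda_1\leq rm$. This partition is determined by the lengths $\length\ker(w^k|_{N_0})=\sum_j\min(\lambda_j,k)$, or equivalently by $\length w^kN_0=\sum_j\max(\lambda_j-k,0)$. For the target partition $(rm,r-1,\ldots,1)$ these are $kr-k(k-1)/2$ for $0\le k\le r-1$. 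By condition (i), these are exactly the lengths of $N_{rm-k}=N_{rm-r+(r-k)}$. So the goal is to show
\[
\ker(w^k|_{N_0})=N_{rm-k}\quad(1\leq k\leq r-1),
\qquad \lambda_1=rm .
\]
Together with $\length N_0=rm+r(r-1)/2$, this forces the stated decomposition. Then $w^{rm-1}N_0\neq0$, which is (iv).

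\textbf{Step 1: upper bound on the number of parts.} Since $\mathcal{A}$ is a free $\mathcal{O}_{mx}$-module of rank $r$ and $\mathcal{O}_{mx}$ is self-injective, $\Hom_{\mathcal{O}_{mx}}(\mathcal{A},\overline{E})\cong\mathcal{A}^{\oplus r}$ as $\mathcal{A}$-modules. Its socle $\ker w$ therefore has dimension $r$, so $N_0$ has at most $r$ cyclic summands.

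\textbf{Step 2: the kernel condition.} By (iv'), $\length w^{r-1}N_0=\length N_0-\length N_{rm-r+1}=rm-r+1$, so $\sum_j\max(\lambda_j-r+1,0)=rm-r+1$. By (ii), $N_{rm-k}\subset\ker(w^k|_{N_0})$, because $w^kN_{rm-k}\subset N_{rm}=0$. Combined with $\ker w^{r-1}=N_{rm-r+1}$, this gives the inequalities $\sum_j\min(\lambda_j,k)\geq kr-k(k-1)/2$, with equality at $k=r-1$.

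\textbf{Step 3: $\lambda_1=rm$.} I would show that $N_0/N_{rm-r+1}\xrightarrow{\,w^{r-1}\,}w^{r-1}N_0$ is a cyclic $\mathcal{A}$-module. The filtration $N_0\supset N_1\supset\cdots\supset N_{rm-r+1}$ has one-dimensional graded pieces and satisfies $wN_i\subset N_{i+1}$. Take $h\in N_0\setminus N_1$. I expect to show $w^ih\notin N_{i+1}$ for each $i$ by running the computation in the proof of Lemma~\ref{Lem: alternative condition of FS}. There, $R\circ(w^{r-1}h)=w^{r-1}h\circ(a(w)-a_0(z))$ and $a_1(z)$ is a unit, so multiplication by $w$ is essentially multiplication by $a_1(z)^{-1}(a(w)-a_0(z))$ up to higher order in $w$. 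That lemma gives $R|_{z^{m-1}\overline{E}}$ as a single nilpotent Jordan block, and this should prevent $w^ih$ from dropping into $N_{i+1}$ early. Then $w^{r-1}N_0$ is cyclic of length $rm-r+1$, so exactly one part exceeds $r-1$, and it equals $rm$.

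\textbf{Step 4: the remaining parts.} After Step 3, the remaining parts $\lambda_2,\ldots$ are at most $r-1$ in number by Step 1, are each at most $r-1$, and sum to $r(r-1)/2$. By the equality case of Step 2, the conjugate-partition inequalities become dominance inequalities, and these alone do not force the staircase $(r-1,\ldots,1)$. The missing ingredient is the reverse inclusion $\ker(w^k|_{N_0})\subset N_{rm-k}$. To get it, I would use the second half of (ii): $h\in N_{rm-r}$ implies $h(w^k)\in\overline{l}_k$, where $\length\overline{l}_k=r-k$. For $h\in\ker w^k$, the values $h(w^j)$ are controlled by the flag $\overline{l}_\bullet$ inside $z^{m-1}\overline{E}$, which is the kernel flag of $R$ by Lemma~\ref{Lem: alternative condition of FS}. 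A dimension count on $\Hom(\mathcal{A},\overline{l}_k)\cap\ker w^k$ should bound the number of parts of size at least $k$ by $r-k+1$, which pins down the staircase.

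I expect Step 4 to be the main obstacle: relating $\ker w^k$ to the prescribed filtration $N_\bullet$, where only (ii) and the Jordan structure of $R$ are available. Step 3 is the second delicate point.
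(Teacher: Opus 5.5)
Your architecture is the paper's: Step~1 plus your Step~4 bound (at most $r-k+1$ parts of size $\geq k$) is its Claim~A, and the cyclicity of $w^{r-1}N_0$ in your Step~3 is equivalent to its Claim~B ($s_2\leq r-1$). However, Step~3 as you propose to carry it out is a genuine gap.

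\textbf{The gap in Step~3.} Take $h\in N_0\setminus N_1$. At $i=rm-r$, the assertion $w^{i}h\notin N_{i+1}$ reads $w^{rm-1}h\neq 0$, because $N_{rm-r+1}=\ker w^{r-1}$. So it is condition (iv) itself. The hope that ``the Jordan block of $R$ should prevent $w^ih$ from dropping early'' is not an argument.

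This is also exactly where condition (iii) must enter. The partition $(rm-1,r,r-2,r-3,\ldots,1)$ is consistent with all the length data in your Steps~1, 2 and~4:
\begin{itemize}
\item total length $rm+r(r-1)/2$ and at most $r$ parts;
\item $\length w^{r-1}N_0=rm-r+1$ and $\lambda_1\geq 2r-1$;
\item at most $r-k+1$ parts of size $\geq k$.
\end{itemize}
So no counting argument can replace the missing step.

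\textbf{The missing idea.} Argue at the bottom, via the socle of $w^{r-1}N_0$.
\begin{itemize}
\item If $g\in w^{r-1}N_0$ and $wg=0$, then $g(w^j)=(wg)(w^{j-1})=0$ for $j\geq 1$, and $zg=0$. Hence $g$ is determined by $g(1)\in z^{m-1}\overline{E}$.
\item Condition (iii) with $i=r-1$ and $\overline{l}_r=0$ gives $\mathcal{D}^{(r-1)}_{\overline{\nabla},\boldsymbol{\delta}}(g)=0$. This says $R\circ g=g\circ(a(w)-a_0(z))\mathrm{id}_{\mathcal{A}}=0$; the $\delta_{r-1}\mathrm{d}z/z$ term vanishes because $zg=0$ and $m\geq 2$.
\item Therefore $g(1)\in\ker(R|_{z^{m-1}\overline{E}})$, which is one-dimensional by Lemma~\ref{Lem: alternative condition of FS}.
\end{itemize}
The socle of $w^{r-1}N_0$ is thus one-dimensional, so $w^{r-1}N_0$ is cyclic. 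Then (iv') gives $\lambda_1-(r-1)=rm-r+1$, i.e.\ $\lambda_1=rm$.

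\textbf{Step~4.} You flag this as the main obstacle, but it is the easy part. It needs neither the reverse inclusion $\ker(w^k|_{N_0})\subset N_{rm-k}$ nor any ``kernel flag of $R$'', which Lemma~\ref{Lem: alternative condition of FS} does not provide anyway.
\begin{itemize}
\item For $k\leq r-1$, (iv') already gives $\ker w^k\subset\ker w^{r-1}=N_{rm-r+1}\subset N_{rm-r}$. So (ii) applies to every $h\in\ker w^k$.
\item Then $w^{k-1}h$ is determined by its value at $1$, namely $h(w^{k-1})\in\overline{l}_{k-1}$.
\item Evaluation at $1$ therefore embeds $w^{k-1}\ker w^k$ into $\overline{l}_{k-1}$, giving $\dim w^{k-1}\ker w^k\leq r-k+1$.
\end{itemize}
The space $\Hom(\mathcal{A},\overline{l}_k)\cap\ker w^k$ you mention is not the right object. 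These bounds, together with $\lambda_1=rm$, $\lambda_j\leq r-1$ for $j\geq 2$, and $\length N_0=rm+r(r-1)/2$, force the staircase.

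The paper closes slightly differently. It proves $s_{r-i+1}=i$ for $0<i<r-1$ and $s_2\leq r-1$, then reads off $s_1=rm$ from the total length. Once the socle argument is in place, your ordering works equally well.
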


\begin{proof}
Since $\overline{E}\cong{\mathcal O}_{mx}^{\oplus r}$,
we may regard
$
N_0  \subset
\Hom_{{\mathcal O}_{mx}}({\mathcal A},\overline{E})
\cong{\mathcal A}^{\oplus r}
$.
So we can write
\begin{equation} \label {candidate of structure of N_0}
N_0 \; \cong \;
\mathbb{C}[w]/(w^{s_1})\oplus\cdots\oplus\mathbb{C}[w]/(w^{s_r}),
\end{equation}
where $s_1\geq s_2\geq\cdots\geq s_r\geq 0$.

We can see $\dim \left( \ker\left(w|_{N_0}\right)\right)\leq r$
from the expression \eqref {candidate of structure of N_0}.
On the other hand, we have
$
N_{rm-1}\subset
\ker\left(w|_{N_0}\right)
$
by Definition \ref {def: framing structure} (ii) and
$\length N_{rm-1}=r$ by Definition \ref {def: framing structure} (i).
Thus we have $N_{rm-1}=\ker\left(w|_{N_0}\right)$,
$\dim\ker\big(w|_{N_0}\big)=r$
and $s_r\geq 1$.

\noindent
\textbf{Claim A.} \, $s_{r-i+1}=i$ holds for $0<i< r-1$.

\noindent
{\it Proof of Claim A.}
It is sufficient to prove
$\#\{ k\,|\, s_k\leq i\}=i$ for $0<i<r-1$.
Note that $w^i\ker\left(w^{i+1}|_{N_0}\right)\subset \ker\left( w|_{N_0}\right)$
holds and \eqref {candidate of structure of N_0} implies
\[
\#\{ k\,|\, s_k\leq i\}=
\dim\ker\left( w|_{N_0}\right)-\dim \left( w^i\ker\big(w^{i+1}|_{N_0}\big)\right)
=r-\dim \left( w^i\ker\big(w^{i+1}|_{N_0}\big)\right).
\]
So we only have to prove
$\dim \left( w^i\ker\big(w^{i+1}|_{N_0}\big)\right)=r-i$
for $0<i<r-1$.
Take any element $h\in \ker\big(w^{i+1}|_{N_0}\big)$.
Then the condition~(iv') of Lemma \ref {Lem: structure of hom space} yields
\[
h\in\ker\big(w^{r-1}|_{N_0}\big)=N_{rm-r+1}\subset N_{rm-r}.
\]
So Definition \ref {def: framing structure} (ii) implies
$(w^ih)(1)=h(w^i)\in \overline{l}_i$  
and $(w^ih)(w^k)=(w^{i+1}h)(w^{k-1})=0$ hold
for $1\leq k\leq r-1$ because $w^{i+1}h=0$.
Then
$\dim \left( w^i\ker\big(w^{i+1}|_{N_0}\big)\right)
\leq\dim \overline{l}_i=r-i$
holds for $0<i<r-1$.
Since there are short exact sequences
\[
0 \longrightarrow \ker\left( w^k|_{N_0}\right)
\longrightarrow \ker\left( w^{k+1}|_{N_0}\right)
\longrightarrow w^k\ker\left(w^{k+1}|_{N_0}\right)
\longrightarrow 0
\]
for $1\leq k\leq i$, we have
\begin{align*}
\length \ker\left( w^{i+1}|_{N_0}\right)
&=
\sum_{k=0}^i\dim(w^k\ker(w^{k+1}|_{N_0}))
\\
&\leq 
\sum_{k=0}^i (r-k)
=\frac{r(r+1)}{2}-\frac{(r-i-1)(r-i)}{2}
=\length N_{rm-i-1}.
\end{align*}
Since
$N_{rm-i-1}\subset \ker\left( w^{i+1}|_{N_0}\right)$,
we have
$\dim \left( w^k\ker\big(w^{k+1}|_{N_0}\big)\right)=r-k$
for any $0<k\leq i$.
Thus Claim A is proved.
\hfill
$\square$

\noindent
\textbf{Claim B.} \, $s_2\leq r-1$.

\noindent
{\it Proof of Claim B.}
Suppose that $s_2\geq r$.
Then there are linearly independent elements
$h_1,h_2\in w^{r-1}N_0$ such that $wh_1=wh_2=0$.
Since $h_1(w^i)=(wh_1)(w^{i-1})=0$ for $1\leq i\leq r-1$,
$h_1$ is determined by $h_1(1)$ which belongs to
$z^{m-1}\overline{E}$ because of $zh_1=0$.
By Definition \ref{def: framing structure} (iii),
we have 
${\mathcal D}^{(r-1)}_{\overline{\nabla},\boldsymbol{\delta}}(h_1)=0$,
which leads to
\[
R\circ h_1=h_1\circ(a(w)-a_0(z))\mathrm{id}_{\mathcal A}
=(a_1(z)+a_2(z)w+\cdots+a_{r-1}(z)w^{r-2})wh_1
=0.
\]
So we have $h_1(1)\in\ker(R|_{z^{m-1}\overline{E}})$.
Similarly $h_2$ is determined by $h_2(1)$
which belongs to $\ker(R|_{z^{m-1}\overline{E}})$.
Since $\dim\ker(R|_{z^{m-1}\overline{E}})=1$
by Lemma \ref {Lem: alternative condition of FS},
$h_1(1),h_2(1)$ are linearly dependent.
This implies that $h_1,h_2$ are linearly dependent,
which contradicts the assumption.
Thus we have $s_2\leq r-1$.
\hfill $\square$

\noindent
Note that $s_1\leq rm$. 
By Claim A and Claim B, we have
\begin{align*}
\length N_0
=s_1+s_2+\cdots+s_r
&\leq
rm+r-1+\sum_{i=3}^{r} (r-i+1)
\\
&=rm+(r-1)+\frac{(r-1)(r-2)}{2}
=rm+\frac{r(r-1)}{2}.
\end{align*}
Since $\length N_0=rm+r(r-1)/2$,
the above inequality yields
$s_1=rm$ and $s_2=r-1$.
Thus the lemma is proved.
\end{proof}

\begin{Lem} \label {Lem: structure of Hom from (iv)}
Let $(\{N_i\},\{\overline{l}_i\})$ be a 
$(\nu(w),\boldsymbol{\delta})$-framing structure
on $(\overline{E},\overline{\nabla})$.
Then the condition \textup{(iv')} of Lemma \ref {Lem: structure of hom space} holds.
In particular, Lemma \ref{Lem: structure of hom space} yields
\[
N_0 \; \cong \;
\mathbb{C}[w]/(w^{rm})\oplus
\mathbb{C}[w]/(w^{r-1})\oplus\cdots\oplus\mathbb{C}[w]/(w).
\]
\end{Lem}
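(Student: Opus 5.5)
The plan is to verify the two parts of condition (iv') separately. The first part follows directly from (iv). The second part is a length comparison using the decomposition of $N_0$ into cyclic $\mathbb{C}[w]$-modules, exactly as at the start of the proof of Lemma~\ref{Lem: structure of hom space}.

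\emph{Step 1: $w^{2r-2}N_0\neq 0$.} Since $m\geq 2$, we have $rm-1\geq 2r-1>2r-2$. Therefore $w^{rm-1}N_0\subset w^{2r-2}N_0$, and condition (iv) of Definition~\ref{def: framing structure} gives $w^{2r-2}N_0\neq 0$.

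\emph{Step 2: the inclusion $N_{rm-r+1}\subset\ker(w^{r-1}|_{N_0})$.} Iterating $wN_i\subset N_{i+1}$ from Definition~\ref{def: framing structure}~(ii) gives $w^{r-1}N_{rm-r+1}\subset N_{rm}=0$.

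\emph{Step 3: equality by counting lengths.} As in the proof of Lemma~\ref{Lem: structure of hom space}, write
\[
N_0\cong\bigoplus_{k=1}^r\mathbb{C}[w]/(w^{s_k}),\qquad s_1\geq\cdots\geq s_r\geq 0,
\]
using $N_0\subset\Hom_{{\mathcal O}_{mx}}({\mathcal A},\overline{E})\cong{\mathcal A}^{\oplus r}$.

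\begin{itemize}
\item Since $w^{rm}=z^m=0$ in ${\mathcal A}$, every $s_k\leq rm$.
\item Condition (iv) gives an element $h$ with $w^{rm-1}h\neq 0$, so some $s_k\geq rm$. Hence $s_1=rm$.
\item By (i), $\sum_k s_k=\length N_0=rm+r(r-1)/2$. Therefore $\sum_{k\geq 2}s_k=r(r-1)/2$.
\end{itemize}

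Now compute the kernel directly from the decomposition:
\[
\dim\ker(w^{r-1}|_{N_0})=\sum_{k}\min(s_k,r-1)\leq (r-1)+\sum_{k\geq 2}s_k=(r-1)+\frac{r(r-1)}{2}=\frac{r(r+1)}{2}-1 .
\]
By (i) with $j=1$, the right-hand side equals $\length N_{rm-r+1}$. Combined with the inclusion of Step~2, this forces $\ker(w^{r-1}|_{N_0})=N_{rm-r+1}$. Thus (iv') holds, and Lemma~\ref{Lem: structure of hom space} gives the stated structure of $N_0$.

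The only step needing care is Step~3, specifically pinning down $s_1=rm$: it needs both the bound $w^{rm}=0$ on ${\mathcal A}^{\oplus r}$ and the existence of an element with $w^{rm-1}h\neq 0$. After that, the argument is just the arithmetic above.
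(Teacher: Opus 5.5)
Your proof is correct and follows the paper's argument essentially step for step. You get $w^{2r-2}N_0\neq 0$ and $s_1=rm$ from (iv), and $\sum_{k\geq 2}s_k=r(r-1)/2$ from (i). The squeeze $\length N_{rm-r+1}\leq\length\ker(w^{r-1}|_{N_0})\leq (r-1)+\sum_{k\geq 2}s_k=\length N_{rm-r+1}$ then finishes it, and you also spell out two points the paper leaves implicit: $m\geq 2$ ensures $w^{rm-1}N_0\subset w^{2r-2}N_0$, and iterating $wN_i\subset N_{i+1}$ gives $N_{rm-r+1}\subset\ker(w^{r-1}|_{N_0})$.
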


\begin{proof}
Since $w^{2r-2}N_0\neq 0$ automatically holds from
the condition (iv) of
Definition \ref {def: framing structure},
we only have to prove
$N_{rm-r+1}=\ker\big(w^{r-1}|_{N_0}\big)$.
As in the proof of Lemma \ref {Lem: structure of hom space},
we can write
\[
N_0\cong \mathbb{C}[w]/(w^{s_1})\oplus\cdots\oplus\mathbb{C}[w]/(w^{s_r})
\]
with 
$s_1\geq s_2\geq\cdots\geq s_r$.
Since $w^{rm-1}N_0\neq 0$ holds by Definition \ref{def: framing structure} (iv),
we have $s_1=rm$.
Then the equality
\[
s_2+\cdots+s_r=rm+\frac{1}{2}r(r-1)-s_1=\frac{1}{2}r(r-1)
\]
holds.
Since $N_{rm-r+1}\subset\ker\big(w^{r-1}|_{N_0}\big)$,
we have
\begin{align*}
\frac{1}{2}r(r+1)-1
&=
\length N_{rm-r+1}
\\
&\leq \length \ker\big(w^{r-1}|_{N_0}\big)
\\
&\leq
\length
\big( (w^{rm-r+1})/(w^{rm})\oplus\mathbb{C}[w]/(w^{s_2})\oplus
\cdots\oplus\mathbb{C}[w]/(w^{s_r})\big)
\\
&=
r-1+s_2+\cdots+s_r=r-1+\frac{1}{2}r(r-1)=\frac{1}{2}r(r+1)-1.
\end{align*}
Thus we have $N_{rm-r+1}=\ker\big(w^{r-1}|_{N_0}\big)$.
\end{proof}

\begin{Lem} \label {Lem: existence of isomorphism}
Let $(\{N_i\},\{\overline{l}_i\})$ be a 
$(\nu(w),\boldsymbol{\delta})$-framing structure
on $(\overline{E},\overline{\nabla})$.
Then there exists an isomorphism
$\sigma\colon {\mathcal A}\xrightarrow{\sim} \overline{E}$
which belongs to $N_0$.
\end{Lem}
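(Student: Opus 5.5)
The plan is to take $\sigma$ to be a generator of the large cyclic summand of $N_0$ given by Lemma~\ref{Lem: structure of Hom from (iv)}, and to prove it is injective by looking only at the socle $z^{m-1}{\mathcal A}$, where the nilpotent operator $R$ from the proof of Lemma~\ref{Lem: alternative condition of FS} controls everything.

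\textbf{Choice of $\sigma$.} By Lemma~\ref{Lem: structure of Hom from (iv)}, $N_0\cong\mathbb{C}[w]/(w^{rm})\oplus\mathbb{C}[w]/(w^{r-1})\oplus\cdots\oplus\mathbb{C}[w]/(w)$. So there is $\sigma\in N_0$ with $w^{rm-1}\sigma\neq0$. Both ${\mathcal A}$ and $\overline{E}$ have length $rm$, so it suffices to show that the ${\mathcal O}_{mx}$-linear map $\sigma$ is injective. Any nonzero ${\mathcal O}_{mx}$-submodule of ${\mathcal A}\cong{\mathcal O}_{mx}^{\oplus r}$ meets the socle $z^{m-1}{\mathcal A}$. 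This socle has basis $z^{m-1}w^j=w^{r(m-1)+j}$ for $0\le j\le r-1$. Hence it is enough to show that the vectors $\sigma(z^{m-1}w^j)$, $0\le j\le r-1$, are linearly independent in $z^{m-1}\overline{E}$.

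\textbf{Relating these vectors through $R$.} Put $g:=z^{m-1}\sigma=w^{r(m-1)}\sigma\in w^{r-1}N_0$; here $m\ge2$ gives $r(m-1)\ge r-1$. We have $zg=0$. Condition (iii) of Definition~\ref{def: framing structure} with $i=r-1$ gives ${\mathcal D}^{(r-1)}_{\overline{\nabla},\boldsymbol{\delta}}(g)\in\Hom({\mathcal A},\overline{l}_r)\otimes\overline{\Omega}(mx)=0$. As in the proof of Lemma~\ref{Lem: alternative condition of FS}, this becomes $R\circ g=g\circ(a(w)-a_0(z))\mathrm{id}_{\mathcal A}$. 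The $\delta_{r-1}\,\mathrm{d}z/z=\delta_{r-1}z^{m-1}\mathrm{d}z/z^m$ term drops out because $zg=0$.

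Iterating, and using $zg=0$ so that $a_k(z)$ acts as $a_{k,0}$, we get
\[
R^j g(1)=a_{1,0}^j\,(w^jg)(1)+(\text{combination of }(w^{j'}g)(1),\ j'>j).
\]
Moreover $(w^jg)(1)=\sigma(z^{m-1}w^j)$. This is the step needing the most care: tracking that only the leading coefficient $a_{1,0}\ne0$ matters modulo higher powers of $w$. The resulting system is triangular. So if $g(1),Rg(1),\dots,R^{r-1}g(1)$ are independent, then the $\sigma(z^{m-1}w^j)$ are independent as well.

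\textbf{Conclusion.} Consider $R^{r-1}g(1)=a_{1,0}^{r-1}(w^{rm-1}\sigma)(1)$. Since $(w^{rm-1}\sigma)(w^k)=\sigma(w^{rm-1+k})=0$ for $k\ge1$ and $w^{rm-1}\sigma\ne0$, the value $(w^{rm-1}\sigma)(1)$ is nonzero. Hence $R^{r-1}g(1)\neq0$. By Lemma~\ref{Lem: alternative condition of FS}, $R^r=0$ on $z^{m-1}\overline{E}$, and a vector $v$ with $R^{r-1}v\ne0$, $R^r v=0$ has $v,Rv,\dots,R^{r-1}v$ linearly independent. This gives injectivity of $\sigma$ on the socle, hence on ${\mathcal A}$. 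By comparing lengths, $\sigma\colon{\mathcal A}\to\overline{E}$ is an isomorphism, and it lies in $N_0$ by construction.
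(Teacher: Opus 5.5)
Your proof is correct and is essentially the paper's argument. Both rest on the chain $R^k\big((z^{m-1}\sigma)(1)\big)=(z^{m-1}\sigma)\big((a(w)-a_0(z))^k\big)$, $0\le k\le r-1$, which comes from $R\circ z^{m-1}\sigma=z^{m-1}\sigma\circ(a(w)-a_0(z))\mathrm{id}_{\mathcal A}$. They differ only in the last step: the paper deduces that $z^{m-1}\sigma$ maps onto $z^{m-1}\overline{E}$ and concludes by Nakayama, while you dually show that $\sigma$ is injective on the socle $z^{m-1}{\mathcal A}$.

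Two simplifications are available. Condition (iv) of Definition~\ref{def: framing structure} already supplies $\sigma$, so Lemma~\ref{Lem: structure of Hom from (iv)} is not needed. Your triangular bookkeeping can also be skipped: the $r$ independent vectors $R^jg(1)=\sigma\big(z^{m-1}(a(w)-a_0(z))^j\big)$ lie in $\sigma(z^{m-1}{\mathcal A})$, whose dimension is at most $\dim z^{m-1}{\mathcal A}=r$.
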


\begin{proof}
By Definition \ref {def: framing structure} (iv),
there exists $\sigma\in N_0$ such that
$w^{mr-1}\sigma\neq 0$.
Recall the proof of Lemma \ref {Lem: alternative condition of FS}.
Setting $h':=z^{m-2}w\sigma$, we have $w^{2r-2}h'=w^{rm-1}\sigma\neq 0$.
From the proof of Lemma \ref {Lem: alternative condition of FS},
we have $R^{r-1}((z^{m-1}\sigma)(1))\neq 0$
and $R^r((z^{m-1}\sigma)(1))=0$.
This implies that
$(z^{m-1}\sigma)(1),R((z^{m-1}\sigma)(1)),\ldots,R^{r-1}((z^{m-1}\sigma)(1))$
are linearly independent.
Since
$
 R^k((z^{m-1}\sigma(1)))=z^{m-1}\sigma((a(w)-a_0(z))^k)
$
holds,
$z^{m-1}\sigma\colon {\mathcal A}\rightarrow z^{m-1}\overline{E}$
is surjective. 
So the induced linear map
${\mathcal A}/z{\mathcal A}\ni\overline{a}\mapsto z^{m-1}\sigma(a)
\in z^{m-1}\overline{E}$
is an isomorphism, because
$\dim_{\mathbb{C}}{\mathcal A}/z{\mathcal A}
=\dim_{\mathbb{C}} z^{m-1}\overline{E}=r$.
This is equivalent to the bijectivity of
$\sigma\otimes\mathbb{C}[z]/(z)\colon
{\mathcal A}/z{\mathcal A}\rightarrow \overline{E}|_x$.
Thus $\sigma\colon {\mathcal A}\rightarrow \overline{E}$
is an isomorphism.
\end{proof}

\begin{Lem} \label {Lem: recover of framing structure}
Let $(\{N_i\},\{\overline{l}_i\})$ be a 
$(\nu(w),\boldsymbol{\delta})$-framing structure
on $(\overline{E},\overline{\nabla})$.
Take an isomorphism 
$\sigma\colon {\mathcal A}\xrightarrow{\sim}\overline{E}$
which belongs to $N_0$.
Then we have
\begin{equation}
\label {recover of N_{rm-r+j}}
N_{rm-r+j}
=
\left\{
h\in \Hom_{{\mathcal O}_{mx}}({\mathcal A},\overline{E})
\,\middle|\,
\text{\rm
$h(w^k)\in \overline{l}_k$ for $0\leq k\leq r-1$ and $w^{r-j}h=0$}
\right\}
\end{equation}
for $0\leq j\leq r-1$
and
\begin{align}
\label {recover of N_i}
\begin{split}
N_i
&=
{\mathcal A} w^i\sigma+N_{rm-r}
=
\left\{
h\in\Hom_{{\mathcal O}_{mx}}({\mathcal A},\overline{E})
\,\middle|
\begin{array}{l}
\exists \alpha(w)\in w^i{\mathcal A}\;\text{\rm such that}
\\
(h-\sigma\circ\alpha(w)\mathrm{id}_{\mathcal A})
(w^k{\mathcal A})\subset \overline{l}_k
\\
\text{\rm for $0\leq k\leq r-1$}
\end{array}
\right\}
\end{split}
\end{align}
for $0\leq i\leq rm-r$.
\end{Lem}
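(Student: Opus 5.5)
We argue by containment plus length count throughout, using the structure $N_0\cong\mathbb{C}[w]/(w^{rm})\oplus\mathbb{C}[w]/(w^{r-1})\oplus\cdots\oplus\mathbb{C}[w]/(w)$ from Lemma~\ref{Lem: structure of Hom from (iv)}. Since $\sigma$ is an isomorphism, every $h\in\Hom_{{\mathcal O}_{mx}}({\mathcal A},\overline{E})$ can be written uniquely as $h=\sigma\circ\beta(w)\mathrm{id}_{\mathcal A}$ with $\beta(w)\in{\mathcal A}$. This identifies $\Hom_{{\mathcal O}_{mx}}({\mathcal A},\overline{E})$ with ${\mathcal A}\cong\mathbb{C}[w]/(w^{rm})$ as an ${\mathcal A}$-module. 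In particular ${\mathcal A}\sigma$ is free of length $rm$ and $w^{rm-1}\sigma\neq 0$.

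\textbf{Step 1: formula \eqref{recover of N_{rm-r+j}}.} Let $M_j$ denote the right-hand side. The inclusion $N_{rm-r+j}\subset M_j$ has two parts.
\begin{itemize}
\item The condition $h(w^k)\in\overline{l}_k$ is Definition~\ref{def: framing structure}(ii), since $N_{rm-r+j}\subset N_{rm-r}$.
\item The condition $w^{r-j}h=0$ follows from $w^{r-j}N_{rm-r+j}\subset N_{rm}=0$, again by (ii).
\end{itemize}
For the reverse inclusion we bound $\length M_j$. An element $h$ of $M_j$ is determined by $h(1),\dots,h(w^{r-1})$, with $h(w^k)\in\overline{l}_k$ for each $k$. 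The condition $w^{r-j}h=0$ means $h(w^{k})=0$ for $k\geq j$ once we use $h(w^{k})=(w^{r-j}h)(w^{k-r+j})$. Some care is needed here because $w^r=z$. Since each $h(w^k)$ lies in $\overline{l}_k\subset\overline{l}_0=z^{m-1}\overline{E}$, we get $zh=0$, so $w^{r}h=0$ already. Then $w^{r-j}h=0$ forces $h(w^k)=0$ for $r-j\le k\le r-1$.

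Counting gives $\length M_j\le\sum_{k=0}^{r-j-1}\dim\overline{l}_k=\sum_{k=0}^{r-j-1}(r-k)=r(r+1)/2-j(j+1)/2$. This equals $\length N_{rm-r+j}$, so equality holds.

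\textbf{Step 2: formula \eqref{recover of N_i}, first equality.} Since $\sigma\in N_0$, (ii) gives $w^i\sigma\in N_i$, hence ${\mathcal A}w^i\sigma+N_{rm-r}\subset N_i$ for $i\le rm-r$. For the reverse we compute the length of ${\mathcal A}w^i\sigma+N_{rm-r}$, which is $(rm-i)+r(r+1)/2-\length({\mathcal A}w^i\sigma\cap N_{rm-r})$.

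The key point is to identify ${\mathcal A}w^i\sigma\cap N_{rm-r}$. We expect it to equal ${\mathcal A}w^{rm-r}\sigma=z^{m-1}{\mathcal A}\sigma$, of length $r$. Then the total length is $rm-i+r(r+1)/2-r=rm+r(r-1)/2-i=\length N_i$, as required.
\begin{itemize}
\item The inclusion $\supset$ holds because $w^{rm-r}\sigma\in N_{rm-r}$.
\item For $\subset$, take $\sigma\circ\beta\in N_{rm-r}$. Then $w^r(\sigma\circ\beta)=z\sigma\beta=0$ by Step 1 with $j=0$. Hence $z\beta=0$, i.e.\ $\beta\in z^{m-1}{\mathcal A}=w^{rm-r}{\mathcal A}$.
\end{itemize}
This is the main point to verify carefully. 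I expect it to be fine, because $\sigma$ identifies the $w$-torsion behaviour exactly.

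\textbf{Step 3: the second equality in \eqref{recover of N_i}.} Let $P_i$ denote the set on the right. If $h=\sigma\alpha+n$ with $\alpha\in w^i{\mathcal A}$ and $n\in N_{rm-r}$, then $h-\sigma\alpha=n$. We need $n(w^k{\mathcal A})\subset\overline{l}_k$, and this requires more than the values $n(w^{k})$.
\begin{itemize}
\item The elements $w^k{\mathcal A}$ are spanned by $w^{k'}z^{t}$ with $k'\ge k$.
\item We have $zn=0$, and $n(w^{k'})\in\overline{l}_{k'}\subset\overline{l}_k$ because the filtration is decreasing.
\end{itemize}
Hence $h\in P_i$. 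Conversely, if $h\in P_i$ then $n:=h-\sigma\alpha$ satisfies $n(w^k)\in\overline{l}_k$. In particular $n({\mathcal A})\subset\overline{l}_0$, so $zn=0$ and $w^rn=0$. Step 1 with $j=0$ then puts $n$ in $N_{rm-r}$. Thus $P_i={\mathcal A}w^i\sigma+N_{rm-r}$.
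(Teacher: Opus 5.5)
Your proof is essentially the paper's argument. The inclusion from Definition~\ref{def: framing structure}~(ii) plus a length count gives \eqref{recover of N_{rm-r+j}}. For \eqref{recover of N_i} the key point is that ${\mathcal A}w^i\sigma\cap N_{rm-r}\subset{\mathcal A}z^{m-1}\sigma$ because $w^r$ kills $N_{rm-r}$, and your Step~3 spells out the check the paper leaves implicit.

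One assertion is false and must be removed, although nothing depends on it. It is not true that every $h\in\Hom_{{\mathcal O}_{mx}}({\mathcal A},\overline{E})$ is uniquely of the form $\sigma\circ\beta(w)\mathrm{id}_{\mathcal A}$:
this space is isomorphic to ${\mathcal A}^{\oplus r}$ and has length $r^2m$ (already $\length N_0=rm+r(r-1)/2>rm$). Indeed, $h\mapsto\sigma^{-1}\circ h$ identifies it with all of $\End_{{\mathcal O}_{mx}}({\mathcal A})$, not just with the multiplication maps. What you actually use is only that $\beta\mapsto\sigma\circ\beta(w)\mathrm{id}_{\mathcal A}$ is injective (evaluate at $1$). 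That injectivity gives $\length({\mathcal A}w^i\sigma)=rm-i$ and $z\beta=0$ in Step~2, so the proof is unaffected.

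There are two smaller slips:
\begin{itemize}
\item In Step~1, ``$k\geq j$'' should read $k\geq r-j$.
\item In Step~3, $w^k{\mathcal A}$ is not spanned by the $z^tw^{k'}$ with $k'\geq k$; for $k\geq 1$ it contains $z$, which is not in that span. It does equal $\sum_{k'=k}^{r-1}{\mathcal O}_{mx}w^{k'}+z{\mathcal A}$, and $n$ kills $z{\mathcal A}$, so your conclusion stands.
\end{itemize}
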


\begin{proof}
By Definition \ref {def: framing structure} (ii),
$N_{rm-r+j}$ is contained in the right-hand side
of \eqref {recover of N_{rm-r+j}}.
The length of the right-hand side of \eqref{recover of N_{rm-r+j}} is
\[
\sum_{k=0}^{r-j-1} \dim \overline{l}_k
=
\sum_{k=0}^{r-1}(r-k)-\sum_{k=r-j}^{r-1}(r-k)
=\frac{1}{2}r(r+1)-\frac{1}{2}j(j+1),
\]
which is equal to $\length N_{rm-r+j}$
by Definition \ref {def: framing structure} (i).
Thus the equality \eqref{recover of N_{rm-r+j}} holds.

We have ${\mathcal A} w^i \sigma +N_{rm-r}\subset N_i$,
for $0\leq i\leq rm-r$.
Since $N_{rm-r}\subset \ker(w^r|_{N_0})$, we can see
${\mathcal A}w^i\sigma\cap N_{rm-r} \subset  {\mathcal A} z^{m-1}\sigma$.
By using $\length ( {\mathcal A} z^{m-1}\sigma)=r$,
we have
\begin{align*}
\length ({\mathcal A}w^i\sigma+N_{rm-r})
&=
\length ({\mathcal A} w^i\sigma)
+
\length(N_{rm-r})
-
\length({\mathcal A}w^i\sigma\cap N_{rm-r})
\\
&\geq
rm-i+\frac{1}{2}r(r+1)-r
=
\length N_i.
\end{align*}
Thus we have $N_i={\mathcal A} w^i\sigma+N_{rm-r}$.
The second equality of \eqref{recover of N_i}
follows from \eqref{recover of N_{rm-r+j}}.
\end{proof}

\begin{Prop} \label {natural representation matrix}
Let $(\{N_i\},\{\overline{l}_i\})$ be a 
$(\nu(w),\boldsymbol{\delta})$-framing structure on 
$(\overline{E},\overline{\nabla})$.
Take an isomorphism
$\sigma\colon{\mathcal A}\xrightarrow{\sim}\overline{E}$ belonging to $N_0$,
whose existence is due to Lemma \ref {Lem: existence of isomorphism}.
Then the representation matrix of $\overline{\nabla}$
with respect to the basis
$\sigma(1),\ldots,\sigma(w^{r-1})$ is of the form
\[
\begin{pmatrix}
\nu_0(z)+\delta_0\mathrm{d}z/z & z\nu_{r-1}(z) & \cdots & z\nu_1(z) \\
\nu_1(z)+b_{10}\mathrm{d}z/z & \nu_0(z)+\delta_1\mathrm{d}z/z & \cdots & z\nu_2(z) \\
\vdots & & \ddots & \vdots \\
\nu_{r-1}(z)+b_{r-1,0}\mathrm{d}z/z & \nu_{r-2}(z)+b_{r-1,1}\mathrm{d}z/z 
& \cdots & \nu_0(z)+\delta_{r-1}\mathrm{d}z/z
\end{pmatrix},
\]
where $b_{ij}\in\mathbb{C}$ for $i> j$.
\end{Prop}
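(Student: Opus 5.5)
We compute $\overline{\nabla}(\sigma(w^k))$ for each $0\le k\le r-1$ by applying condition (iii) of Definition~\ref{def: framing structure} to the element $w^k\sigma\in w^kN_0$. Since $\sigma\in N_0$, condition (iii) with $i=k$ gives
\[
\overline{\nabla}\circ (w^k\sigma)-(w^k\sigma\otimes\mathrm{id})\circ \nu(w)^{(k)}_{\boldsymbol{\delta}}\mathrm{id}_{\mathcal A}
\in \Hom_{{\mathcal O}_{mx}}({\mathcal A},\overline{l}_{k+1})\otimes\overline{\Omega}(mx).
\]
Evaluating at $1\in{\mathcal A}$ and using $(w^k\sigma)(a)=\sigma(w^ka)$, we get
\[
\overline{\nabla}(\sigma(w^k))=\sigma\big(w^k\nu(w)\big)+\delta_k\frac{\mathrm{d}z}{z}\,\sigma(w^k)+(\text{element of }\overline{l}_{k+1}\otimes\overline{\Omega}(mx)).
\]
Expanding $w^k\nu(w)=\sum_j \nu_j(z)w^{j+k}$ and reducing $w^{j+k}=zw^{j+k-r}$ when $j+k\ge r$ gives exactly the $k$-th column: the diagonal entry is $\nu_0+\delta_k\mathrm{d}z/z$, the entries below the diagonal in row $l>k$ are $\nu_{l-k}$, and the entries above the diagonal in row $l<k$ are $z\nu_{r-k+l}$. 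What remains is to describe the error term in $\overline{l}_{k+1}\otimes\overline{\Omega}(mx)$.

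The key step is identifying the filtration $\overline{l}_\bullet$ in the basis $\sigma(w^j)$. I claim $\overline{l}_j$ is spanned by $z^{m-1}\sigma(w^j),\dots,z^{m-1}\sigma(w^{r-1})$.

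To prove this, take $h:=z^{m-1}w^{j}\sigma\cdot$ (suitably), or more precisely use Lemma~\ref{Lem: recover of framing structure}. The element $z^{m-1}\sigma=w^{rm-r}\sigma$ lies in $N_{rm-r}$ by condition (ii), since $w^iN_0\subset N_i$. So condition (ii) gives $z^{m-1}\sigma(w^k)\in\overline{l}_k$ for every $k$. Similarly, $z^{m-1}w^{j}\sigma\in N_{rm-r+j}\subset N_{rm-r}$, and therefore $z^{m-1}\sigma(w^{j+k})\in\overline{l}_k$ for $j+k\le r-1$. Hence $\overline{l}_k\supset\langle z^{m-1}\sigma(w^{l}) : l\ge k\rangle$. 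Both sides have dimension $r-k$, because $\sigma$ is an isomorphism (Lemma~\ref{Lem: existence of isomorphism}), so equality holds. This is the step needing the most care: one must check that these vectors are independent and lie in $z^{m-1}\overline{E}$, both of which follow from $\sigma$ being an isomorphism.

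Consequently,
\[
\overline{l}_{k+1}\otimes\overline{\Omega}(mx)=\bigoplus_{l>k}\mathbb{C}\,z^{m-1}\sigma(w^l)\otimes\frac{\mathrm{d}z}{z^m}=\bigoplus_{l>k}\mathbb{C}\,\sigma(w^l)\frac{\mathrm{d}z}{z}.
\]
Here we use that $z^{m-1}\overline{\Omega}(mx)$ is spanned by $\mathrm{d}z/z$ over $\mathbb{C}$, as $z$ annihilates $z^{m-1}\overline{E}$. So the error term contributes exactly $b_{lk}\,\mathrm{d}z/z$ in rows $l>k$ with $b_{lk}\in\mathbb{C}$, and contributes nothing on or above the diagonal. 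This yields the stated matrix.
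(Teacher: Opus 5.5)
Your proposal is correct and follows the paper's proof essentially step for step. You identify $\overline{l}_k$ as the span of $z^{m-1}\sigma(w^k),\ldots,z^{m-1}\sigma(w^{r-1})$ using condition (ii) and a dimension count, then read off each column from condition (iii) applied to $w^k\sigma$ and evaluated at $1$. Your extra step using $z^{m-1}w^j\sigma\in N_{rm-r+j}$ is harmless but unnecessary, since the filtration $\overline{l}_\bullet$ is already decreasing.
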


\begin{proof}
By Definition \ref {def: framing structure} (ii),
$z^{m-1}\sigma\in N_{rm-r}$
and
$z^{m-1}\sigma(w^i)\in \overline{l}_i$ hold for $0\leq i\leq r-1$.
So $\overline{l}_i$ is generated by 
$z^{m-1}\sigma(w^i),\ldots,z^{m-1}\sigma(w^{r-1})$,
because these are linearly independent 
from the proof of Lemma \ref{Lem: existence of isomorphism}.
Since ${\mathcal D}^{(i)}_{\overline{\nabla},\boldsymbol{\delta}}(w^i\sigma)(1)
\in \overline{l}_{i+1}\otimes\overline{\Omega}(mx)$
for $0\leq i\leq r-1$
by Definition \ref {def: framing structure} (iii),
$\overline{\nabla}(\sigma(w^i))
\equiv \sigma\big(w^i\nu(w)^{(i)}_{\boldsymbol{\delta}}\big)
\pmod {\overline{l}_{i+1}\otimes\overline{\Omega}(mx)}$
holds.
So there exist $b_{i+1,i},\ldots,b_{r-1,i}\in\mathbb{C}$ such that
\begin{align}
\label {image of w^i}
\begin{split}
\overline{\nabla}(\sigma(w^i))
&=
\sigma\Big(w^i\Big(\nu_0(z)+\nu_1(z)w+\cdots
+\nu_{r-1}(z)w^{r-1}+\delta_i\frac{\mathrm{d}z}{z}
\Big)\Big)
\\
&\qquad
+b_{i+1,i}\frac{\mathrm{d}z}{z}\sigma(w^{i+1})+\cdots+b_{r-1,i}\frac{\mathrm{d}z}{z}\sigma(w^{r-1})
\\
&=
\Big(\nu_0(z)+\delta_i\frac{\mathrm{d}z}{z}\Big)\sigma(w^i)+\cdots
+\Big(\nu_{r-i-1}(z)+b_{r-1,i}\frac{\mathrm{d}z}{z}\Big)\sigma(w^{r-1})
\\
&\qquad
+\nu_{r-i}(z)z\sigma(1)+\cdots+\nu_{r-1}(z)z\sigma(w^{i-1})
\end{split}
\end{align}
holds.
The equality  \eqref{image of w^i}
gives the desired representation matrix of $\overline{\nabla}$.
\end{proof}

\begin{Prop} \label {Prop: replacement of diagonal part}
Let $(\{N_i\},\{\overline{l}_i\})$ be a 
$(\nu(w),\boldsymbol{\delta})$-framing structure
on $(\overline{E},\overline{\nabla})$.
Take a tuple of complex numbers
$\boldsymbol{\delta'}=(\delta'_0,\ldots,\delta'_{r-1})$
satisfying $\delta'_0+\cdots+\delta'_{r-1}=0$.
Then there exists an isomorphism
$g\colon \overline{E}\xrightarrow{\sim}\overline{E}$
such that 
$g-\mathrm{id}_{\overline{E}}$ maps
$\overline{E}$ to $z^{m-1}\overline{E}$
and 
$(\{ g(N_i) \}, \{\overline{l}_i\})$
becomes a $(\nu(w),\boldsymbol{\delta'})$-framing structure
on $(\overline{E},\overline{\nabla})$.
\end{Prop}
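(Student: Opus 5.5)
The plan is to work in the basis furnished by Proposition \ref{natural representation matrix} and to look for $g$ of the form $g=\mathrm{id}_{\overline{E}}+z^{m-1}\phi$, with $\phi\in\End(\overline{E})$. Choose $\sigma\in N_0$ as in Lemma \ref{Lem: existence of isomorphism}. By Lemma \ref{Lem: recover of framing structure}, the whole structure is determined by $\sigma$ and $\{\overline{l}_i\}$: we have $N_i={\mathcal A}w^i\sigma+N_{rm-r}$, and $N_{rm-r+j}$ is described purely in terms of $\overline{l}_k$. So it is natural to set $\sigma':=g\circ\sigma$ and to define $N'_i:={\mathcal A}w^i\sigma'+N_{rm-r}$.

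The first step is to check that $g(N_i)=N'_i$ and that $\{\overline{l}_i\}$ is preserved. Since $g-\mathrm{id}$ has image in $z^{m-1}\overline{E}$, we get $g|_{z^{m-1}\overline{E}}=\mathrm{id}$ (because $z^{2m-2}=0$ for $m\geq 2$). Hence $g$ fixes each $\overline{l}_i$ and fixes every $h$ with $zh=0$, in particular every $h\in N_{rm-r}$. It follows that $g(N_i)=N'_i$. Conditions (i), (ii) and (iv) transfer immediately because $g$ is an ${\mathcal A}$-linear automorphism of the Hom space that fixes $N_{rm-r}$.

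The real content is condition (iii) with $\boldsymbol{\delta'}$. On $N_{rm-r}$ the twist $\delta_i\,\mathrm{d}z/z$ acts through $z^{m-1}$ and kills $z$-torsion, so the operator ${\mathcal D}^{(i)}$ there is independent of $\boldsymbol{\delta}$ up to terms we already control. Thus it suffices to arrange
\[
\overline{\nabla}(\sigma'(w^i))\equiv\sigma'\bigl(w^i\nu(w)^{(i)}_{\boldsymbol{\delta'}}\bigr)\pmod{\overline{l}_{i+1}\otimes\overline{\Omega}(mx)},
\]
together with the corresponding statement for $w^i\cdot{\mathcal A}\sigma'$. Writing $\overline{\nabla}g=g\overline{\nabla}+[\overline{\nabla},g]$, the correction is
\[
[\overline{\nabla},z^{m-1}\phi]=z^{m-1}[R,\phi]\,\frac{\mathrm{d}z}{z^m}=[R_0,\phi]\,\frac{\mathrm{d}z}{z},
\]
where $R_0=R|_{z^{m-1}\overline{E}}$ is the residue-level part. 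By Proposition \ref{natural representation matrix}, $R_0$ is the companion-type nilpotent $a_{1,0}\cdot(\text{shift})$ plus lower terms, and by Lemma \ref{Lem: alternative condition of FS} it is regular nilpotent.

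So we must solve for $\phi$, in the basis $e_k=z^{m-1}\sigma(w^k)$, the equation
\[
[R_0,\phi]\equiv\mathrm{diag}(\delta'_k-\delta_k)\pmod{\text{strictly lower triangular}}.
\]
This is the main obstacle. To attack it, I would take $\phi$ upper triangular and use the fact that $\mathrm{ad}_N$, for the lower shift $N$, maps upper triangular matrices onto the space of diagonal-plus-lower matrices with zero trace modulo strictly lower terms. Concretely, choosing the superdiagonal $\phi_{k,k+1}=c_k$ gives diagonal entries $a_{1,0}(c_{k-1}-c_k)$, and these can realize any traceless diagonal. This is solvable because $a_{1,0}\neq 0$ and $\sum_k(\delta'_k-\delta_k)=0$, which is exactly where condition \eqref{trace condition} is used. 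The remaining strictly lower terms are absorbed into new constants $b'_{ij}$, and the lower-order parts of $R_0$ only change strictly lower entries, so they cause no trouble. Finally, (iii) for general elements of $w^iN'_0$ follows by ${\mathcal A}$-linearity and from the $N_{rm-r}$ part.
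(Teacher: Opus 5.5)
Your proposal is correct and is essentially the paper's argument. The paper also takes $g$ to be $\mathrm{id}$ plus a map into $z^{m-1}\overline{E}$ supported on the superdiagonal in the basis $\sigma(1),\ldots,\sigma(w^{r-1})$. It builds $g$ as a composite of elementary moves $\mathrm{id}+\gamma_{b_i}$, with $\gamma_{b_i}(\sigma(w^{i+1}))=b_iz^{m-1}\sigma(w^i)$, each transferring $b_ia_1(0)$ from $\delta_i$ to $\delta_{i+1}$; this is exactly your diagonal formula $a_{1,0}(c_{k-1}-c_k)$, with solvability coming from the trace condition. Your recasting of condition (iii) as the single congruence $[R_0,\phi]\equiv\mathrm{diag}(\delta'_k-\delta_k)$ modulo strictly lower triangular terms is a compact form of the paper's case-by-case computation of ${\mathcal D}^{(k)}(\gamma_{b_i}\circ\sigma)(w^k)$. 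One small correction: what keeps $[R_0,\phi]$ lower triangular is that $\phi$ is supported on the diagonal and superdiagonal, not general upper triangularity, as you state in the general claim; your concrete choice of $\phi$ does respect this.
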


\begin{proof}
By Lemma \ref{Lem: existence of isomorphism},
there exists an isomorphism 
$\sigma\colon {\mathcal A}\xrightarrow{\sim}\overline{E}$
which belongs to $N_0$.
Then 
$\sigma(1),\sigma(w),\ldots,\sigma(w^{r-1})$
becomes a basis of $\overline{E}$
over ${\mathcal O}_{mx}$.
Fix an integer $i$ with $0\leq i\leq r-1$.
Take $b_i\in\mathbb{C}$.
Define a homomorphism
$
\gamma_{b_i}\colon \overline{E}\longrightarrow z^{m-1}\overline{E}
$
by
\[
 \gamma_{b_i}\left(\sigma(w^k)\right)=
\begin{cases}
0 & (\text{if $k\neq i+1$}) \\
b_i z^{m-1}\sigma(w^{i}) & (\text{if $k=i+1$}).
\end{cases}
\]
Put $\alpha_i:=b_ia_1(0)$. 
Then we have
\begin{align}
\label {D^i for w^i}
\begin{split}
{\mathcal D}_{\overline{\nabla},\boldsymbol{\delta}}^{(i)}
(\gamma_{b_i}\circ \sigma) (w^i)
&=
\overline{\nabla}(\gamma_{b_i}(\sigma(w^{i})))
-\gamma_{b_i}(\sigma(\nu(w)w^i))
\\
&=
-\gamma_{b_i}\big(
\sigma\big(\nu_0(z)w^i+\cdots+\nu_{r-1}(z)w^{r+i-1}\big)\big)
\\
&=
-b_iz^{m-1}\nu_1(z)\sigma(w^i)
=
-\alpha_i\frac{\mathrm{d}z}{z}\sigma(w^i)
\end{split}
\end{align}
and
\begin{align}
\label {D^{i+1} for w^{i+1}}
\begin{split}
{\mathcal D}_{\overline{\nabla},\boldsymbol{\delta}}^{(i+1)}
(\gamma_{b_i}\circ \sigma)(w^{i+1})
&=
\overline{\nabla}(\gamma_{b_i}(\sigma(w^{i+1})))
-\gamma_{b_i}(\sigma(\nu(w)w^{i+1}))
\\
&=
\overline{\nabla}(b_iz^{m-1}\sigma(w^i))-b_iz^{m-1}\nu_0(z)\sigma(w^i)
\\
&=
b_iz^{m-1}\sigma(\nu(w)w^i)-b_iz^{m-1}\sigma(\nu_0(z)w^i)
\\
&=
b_iz^{m-1}\sigma\big((\nu_1(z)w+\cdots+\nu_{r-1}(z)w^{r-1})w^i\big)
\\
&\equiv
\alpha_i\frac{\mathrm{d}z}{z}\sigma(w^{i+1})
\pmod{\overline{l}_{i+2}\otimes\overline{\Omega}(mx)}.
\end{split}
\end{align}
For $0\leq k\leq i-1$, 
\begin{align}
\label {D^k for w^k with k<i}
\begin{split}
{\mathcal D}_{\overline{\nabla},\boldsymbol{\delta}}^{(k)}
(\gamma_{b_i}\circ \sigma)(w^k)
&=
\overline{\nabla}(\gamma_{b_i}(\sigma(w^k)))
-\gamma_{b_i}(\sigma(\nu(w)w^k))
\\
&=
-\gamma_{b_i}\left(\sigma\left(\nu_0(z)w^k+\cdots
+\nu_{r-1}(z)w^{r+k-1}\right)\right)
\\
&=
-b_iz^{m-1}\nu_{i-k+1}(z)\sigma(w^i)
\in \overline{l}_i\otimes\overline{\Omega}(mx)
\end{split}
\end{align}
holds and for $i+2\leq k\leq r-1$,
\begin{align}
\label {D^k for w^k with k>i+1}
\begin{split}
{\mathcal D}^{(k)}_{\overline{\nabla},\boldsymbol{\delta}}
(\gamma_{b_i}\circ \sigma)(w^k)
&=
\overline{\nabla}(\gamma_{b_i}(\sigma(w^k)))
-\gamma_{b_i}(\sigma(\nu(w)w^k))
\\
&=
-\gamma_{b_i}\left(\sigma\left(\nu_0(z)w^k+\cdots
+\nu_{r-1}(z)w^{r+k-1}\right)\right)
=0
\end{split}
\end{align}
holds.
Set 
\[
\boldsymbol{\tilde{\delta}}=
(\tilde{\delta}_0,\ldots,\tilde{\delta}_{r-1})
:=
(\delta_0,\ldots,\delta_i-\alpha_i,\delta_{i+1}+\alpha_i,\ldots,\delta_{r-1}).
\]
Then we have
\begin{align*}
{\mathcal D}^{(k)}_{\overline{\nabla},\boldsymbol{\tilde{\delta}}}
\left((\mathrm{id}+\gamma_{b_i})\circ \sigma \right)(w^k)
&=
{\mathcal D}^{(k)}_{\overline{\nabla},\boldsymbol{\tilde{\delta}}}
(\sigma)(w^k)
+
{\mathcal D}^{(k)}_{\overline{\nabla},\boldsymbol{\tilde{\delta}}}
(\gamma_{b_i}(\sigma))(w^k)
\\
&=
{\mathcal D}^{(k)}_{\overline{\nabla},\boldsymbol{\delta}}
(\sigma)(w^k)
-(\tilde{\delta}_k-\delta_k)\sigma(w^k)\frac{\mathrm{d}z}{z}
+
{\mathcal D}^{(k)}_{\overline{\nabla},\boldsymbol{\delta}}
(\gamma_{b_i}(\sigma))(w^k)
\end{align*}
for $0\leq k\leq r-1$.
By using \eqref{D^i for w^i}, \eqref{D^{i+1} for w^{i+1}},
\eqref{D^k for w^k with k<i} and \eqref {D^k for w^k with k>i+1},
we have
\[
{\mathcal D}^{(k)}_{\overline{\nabla},\boldsymbol{\tilde{\delta}}}
\left( (\mathrm{id}+\gamma_{b_i})\circ \sigma\right)(w^k)
\in
\overline{l}_{k+1}\otimes\overline{\Omega}(mx)
\]
for $0 \leq k\leq r-1$.
Since $N_0$ is expressed by
Lemma \ref{Lem: recover of framing structure},
we can see
\[
{\mathcal D}^{(k)}_{\overline{\nabla},\boldsymbol{\tilde{\delta}}}
\left( (\mathrm{id}+\gamma_{b_i})w^kN_0\right)
\subset
\Hom_{{\mathcal O}_{mx}}({\mathcal A},\overline{l}_{k+1})
\otimes\overline{\Omega}(mx)
\]
for $0\leq k\leq r-1$,
which means that
$(\{(\mathrm{id}+\gamma_{b_i})(N_k)\},\{\overline{l}_k\})$
is a $(\nu(w),\boldsymbol{\tilde{\delta}})$-framing structure.

Composing 
$\mathrm{id}+\gamma_{b_0},\ldots,\mathrm{id}+\gamma_{b_{r-1}}$
for a suitable choice of $b_0,\ldots,b_{r-1}$,
we get the desired isomorphism.
\end{proof}

The following corollary follows from
Proposition \ref{Prop: replacement of diagonal part}
and
Proposition \ref{natural representation matrix}.

\begin{Cor} \label {cor transformed representation matrix}
Under the same assumption as Proposition \ref {Prop: replacement of diagonal part},
there are an isomorphism 
$g\colon \overline{E}\xrightarrow{\sim}\overline{E}$
of the form $g=\mathrm{id}+z^{m-1}\gamma$
and an isomorphism 
$\sigma\colon{\mathcal A}\xrightarrow{\sim}\overline{E}$
belonging to $N_0$ 
such that
$\overline{\nabla}$ is represented by a matrix of the form
\[
\begin{pmatrix}
\nu_0(z)+\delta'_0\mathrm{d}z/z & z\nu_{r-1}(z) & \cdots & z\nu_1(z) \\
\nu_1(z)+b'_{10}\mathrm{d}z/z & \nu_0(z)+\delta'_1\mathrm{d}z/z & \cdots & z\nu_2(z) \\
\vdots & & \ddots & \vdots \\
\nu_{r-1}(z)+b'_{r-1,0}\mathrm{d}z/z & \nu_{r-2}(z)+b'_{r-1,1}\mathrm{d}z/z 
& \cdots & \nu_0(z)+\delta'_{r-1}\mathrm{d}z/z
\end{pmatrix},
\]
with respect to the basis
$(g\circ \sigma)(1),(g\circ \sigma)(w),\ldots,(g\circ \sigma)(w^{r-1})$
of $\overline{E}$.
\end{Cor}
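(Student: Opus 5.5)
The proof combines the two propositions. First apply Proposition~\ref{Prop: replacement of diagonal part} to the given $(\nu(w),\boldsymbol{\delta})$-framing structure $(\{N_i\},\{\overline{l}_i\})$ and the target tuple $\boldsymbol{\delta'}$. This yields an isomorphism $g\colon\overline{E}\xrightarrow{\sim}\overline{E}$ such that $g-\mathrm{id}_{\overline{E}}$ maps $\overline{E}$ into $z^{m-1}\overline{E}$. We can therefore write $g=\mathrm{id}+z^{m-1}\gamma$ for some endomorphism $\gamma$ of $\overline{E}$. To obtain $\gamma$, note that $z^{m-1}\colon\overline{E}/z\overline{E}\to z^{m-1}\overline{E}$ is an isomorphism, because $\overline{E}$ is free over $\mathcal{O}_{mx}$. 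The map $g-\mathrm{id}$ factors through $\overline{E}/z\overline{E}$, since $z$ times its image lies in $z^m\overline{E}=0$. Hence $g-\mathrm{id}$ lifts to such a $\gamma$.

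Moreover, $(\{g(N_i)\},\{\overline{l}_i\})$ is then a $(\nu(w),\boldsymbol{\delta'})$-framing structure on $(\overline{E},\overline{\nabla})$. Applying Lemma~\ref{Lem: existence of isomorphism} to this new framing structure gives an isomorphism $\sigma'\colon\mathcal{A}\xrightarrow{\sim}\overline{E}$ lying in $g(N_0)$. Proposition~\ref{natural representation matrix} applied to $\sigma'$ then shows that $\overline{\nabla}$ has the displayed matrix form, with $\delta'_i$ on the diagonal and constants $b'_{ij}$ below, in the basis $\sigma'(1),\ldots,\sigma'(w^{r-1})$.

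It remains to express $\sigma'$ in the form required by the statement. Here $g(N_0)$ means $\{g\circ h\mid h\in N_0\}$, which is how $g$ acts on $\Hom_{\mathcal{O}_{mx}}(\mathcal{A},\overline{E})$. So $\sigma:=g^{-1}\circ\sigma'$ belongs to $N_0$, and it is an isomorphism because both $g$ and $\sigma'$ are. Then $(g\circ\sigma)(w^k)=\sigma'(w^k)$ for every $k$, so the two bases coincide and the matrix computed above is the one claimed.

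The only point needing care is that the action of $g$ on $N_i$ is indeed post-composition. This is implicit in the proof of Proposition~\ref{Prop: replacement of diagonal part}, which writes $(\mathrm{id}+\gamma_{b_i})\circ\sigma$. Everything else is a direct citation of the earlier results.
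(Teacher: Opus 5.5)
Your argument is correct and follows the route the paper intends: the paper gives no separate proof, stating only that the corollary follows from Proposition~\ref{Prop: replacement of diagonal part} and Proposition~\ref{natural representation matrix}. You have supplied the implicit intermediate steps, namely writing $g-\mathrm{id}$ as $z^{m-1}\gamma$, invoking Lemma~\ref{Lem: existence of isomorphism} for the new framing structure $(\{g(N_i)\},\{\overline{l}_i\})$, and setting $\sigma:=g^{-1}\circ\sigma'$ so that $\sigma\in N_0$.
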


\begin{Prop} \label {Prop: (nu,c)-framing structure and (nu,n)-framing structure}
Let $\boldsymbol{\delta}=(\delta_0,\ldots,\delta_{r-1})$ and
$\boldsymbol{\delta'}=(\delta'_0,\ldots,\delta'_{r-1})$
be tuples of complex numbers satisfying
$
\delta_0+\cdots+\delta_{r-1}=\delta'_0+\cdots+\delta'_{r-1}=0.
$
Then there is a bijection between the set of
$(\nu(w),\boldsymbol{\delta})$-framing structures on 
$(\overline{E},\overline{\nabla})$
and the set of $(\nu(w),\boldsymbol{\delta'})$-framing structures on
$(\overline{E},\overline{\nabla})$
through the correspondence
\[
 (\{N_i\},\{\overline{l}_i\}) \;\longmapsto \; (\{g(N_i)\},\{\overline{l}_i\})
\]
for an isomorphism 
$g\colon \overline{E}\xrightarrow{\sim}\overline{E}$
satisfying $g(\overline{l}_i)=\overline{l}_i$ for any $i$.
Moreover, $(\{g(N_i)\},\{\overline{l}_i\})$
is independent of the choice of the isomorphism $g$.
\end{Prop}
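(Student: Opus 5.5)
The plan is to define the map using Proposition~\ref{Prop: replacement of diagonal part}, then prove that the result does not depend on the choice of $g$, and finally obtain bijectivity by running the construction backwards.

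\textbf{Definition of the map.} Given a $(\nu(w),\boldsymbol{\delta})$-framing structure $(\{N_i\},\{\overline{l}_i\})$, Proposition~\ref{Prop: replacement of diagonal part} supplies an isomorphism $g$ with $g-\mathrm{id}_{\overline{E}}$ mapping $\overline{E}$ into $z^{m-1}\overline{E}$. For this $g$, the pair $(\{g(N_i)\},\{\overline{l}_i\})$ is a $(\nu(w),\boldsymbol{\delta'})$-framing structure. The $g$ constructed there is a product of maps $\mathrm{id}+\gamma_{b_i}$. Each $\gamma_{b_i}$ sends $\sigma(w^{i+1})$ into $z^{m-1}\sigma(w^i)$ and kills the other basis vectors. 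Since $\overline{l}_k$ is spanned by $z^{m-1}\sigma(w^k),\dots,z^{m-1}\sigma(w^{r-1})$, each factor preserves every $\overline{l}_k$, so $g(\overline{l}_i)=\overline{l}_i$. We define the map as $(\{N_i\},\{\overline{l}_i\})\mapsto(\{g(N_i)\},\{\overline{l}_i\})$.

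\textbf{Independence of $g$.} This is the key step. I would show that the resulting framing structure is determined by $\{N_i\},\{\overline{l}_i\}$ and $\boldsymbol{\delta'}$ alone.

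1. Suppose $(\{N'_i\},\{\overline{l}_i\})$ and $(\{N''_i\},\{\overline{l}_i\})$ are two $(\nu(w),\boldsymbol{\delta'})$-framing structures, obtained as $g'(N_i)$ and $g''(N_i)$ for isomorphisms $g',g''$ preserving the $\overline{l}_i$.
2. By Lemma~\ref{Lem: recover of framing structure}, $N_{rm-r+j}$ depends only on $\{\overline{l}_k\}$, and $N_i=\mathcal{A}w^i\sigma+N_{rm-r}$. So it suffices to compare $\sigma'=g'\circ\sigma$ and $\sigma''=g''\circ\sigma$ modulo $N_{rm-r}$.
3. The differences $\sigma'-\sigma$ and $\sigma''-\sigma$ preserve the filtration up to $z^{m-1}$-level terms. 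So the question reduces to whether $\sigma'-\sigma''$ lies in $\mathcal{A}\sigma'+N_{rm-r}$. Note that $\mathcal{A}\sigma'+N_{rm-r}=N'_0$.
4. Here I would use condition (iii) for both structures with the same $\boldsymbol{\delta'}$. Set $\tau=\sigma''-\sigma'$ after adjusting $\tau$ by an element of $\mathcal{A}\sigma'$. Then $\mathcal{D}^{(k)}_{\overline{\nabla},\boldsymbol{\delta'}}(w^k\tau)$ lies in $\Hom(\mathcal{A},\overline{l}_{k+1})\otimes\overline{\Omega}(mx)$ for all $k$.
5. Compute this condition in the basis of Proposition~\ref{natural representation matrix}, as in \eqref{D^i for w^i}--\eqref{D^k for w^k with k>i+1}. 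The "superdiagonal" perturbations $\gamma$ change the $k$-th diagonal entry by $\mp b\,a_1(0)\,\mathrm{d}z/z$. Because $a_1(0)\neq0$, requiring the diagonal data to equal $\boldsymbol{\delta'}$ forces these perturbations to vanish modulo terms in $N_{rm-r}$ or in $\mathcal{A}\sigma'$.
6. Concretely, write $g''\circ g'^{-1}=\mathrm{id}+\eta$. By Lemma~\ref{Lem: recover of framing structure}, $N''_0$ is characterized by the existence of $\alpha(w)$ with $(h-\sigma''\alpha)(w^k\mathcal{A})\subset\overline{l}_k$. Show that the component of $\eta$ not preserving the flag $\{\overline{l}_k\}$ in the "shifted" way has trivial $\delta$-shift. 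Since $a_1(0)\neq 0$, that component is zero modulo the flag, and hence $N''_i=N'_i$.

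\textbf{Bijectivity.} Performing the same construction from $\boldsymbol{\delta'}$ back to $\boldsymbol{\delta}$, using $g^{-1}$ (which also preserves the $\overline{l}_i$ and is of the same form), gives the inverse map. Uniqueness from the previous step guarantees that the two compositions are identities.

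\textbf{Main obstacle.} I expect the hardest part to be steps 4--6 of the independence argument. The filtration-preserving automorphisms that map one $\boldsymbol{\delta'}$-framing structure to another must turn out to stabilize $\{N'_i\}$. This requires carefully tracking the $\mathrm{d}z/z$ diagonal terms, whose shifts are controlled by $a_1(0)\neq0$.
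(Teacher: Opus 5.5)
Your overall plan matches the paper's: existence comes from Proposition~\ref{Prop: replacement of diagonal part}, independence of $g$ is the crux, and the inverse is obtained from $g^{-1}$. However, the independence step has a genuine gap in two places.

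\textbf{First gap: the reduction to a small perturbation.} Step~3 asserts that the differences are ``$z^{m-1}$-level'', and step~4 adjusts by an unspecified element of $\mathcal{A}\sigma'$. The statement allows any $g$ preserving the $\overline{l}_i$. For instance, when $\boldsymbol{\delta}=\boldsymbol{\delta'}$ one may take $\sigma\circ\beta(w)\mathrm{id}_{\mathcal A}\circ\sigma^{-1}$ with $\beta\in\mathcal{A}^{\times}$. So $\tau:=g''\circ g'^{-1}$ need not be close to the identity, and writing it as $\mathrm{id}+\eta$ with $\eta$ small is unjustified. The missing idea is to restrict condition (iii) with $k=0$ to $z\mathcal{A}$:
\begin{itemize}
\item On $z\mathcal{A}$, both the $\overline{l}_1\otimes\overline{\Omega}(mx)$-valued error and the term $\delta'_0\,\mathrm{d}z/z$ are killed. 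Hence $\overline{\nabla}\circ h=h\circ\nu(w)\mathrm{id}_{\mathcal A}$ on $z\mathcal{A}$ for both $h=\sigma'$ and $h=\tau\circ\sigma'$.
\item Therefore $\sigma'^{-1}\circ\tau\circ\sigma'|_{z\mathcal A}$ commutes with $a(w)-a_0(z)$. It then commutes with $w$, which is a polynomial in $a(w)-a_0(z)$ because $a_1(z)$ is a unit.
\item So this restriction is multiplication by some $\alpha(w)\in\mathcal{A}$.
\end{itemize}
Only after this is $\rho:=\tau-\sigma'\circ\alpha(w)\mathrm{id}_{\mathcal A}\circ\sigma'^{-1}$ an endomorphism killing $z\overline{E}$ with image in $z^{m-1}\overline{E}$. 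Only such a map is amenable to matrix computations like those in the proof of Proposition~\ref{Prop: replacement of diagonal part}. Since $\sigma'\circ\alpha(w)\mathrm{id}_{\mathcal A}\circ\sigma'^{-1}$ preserves each $N'_i$, it then suffices to show $\rho(N'_i)\subset N'_i$.

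\textbf{Second gap: the ``$\delta$-shift'' criterion.} The criterion of steps~5--6 (``trivial $\delta$-shift, hence zero because $a_1(0)\neq0$'') is insufficient. Subtract the conditions (iii) for $\tau\circ\sigma'$, $\alpha(w)\sigma'$ and $\sigma'$. The $\boldsymbol{\delta'}$-terms cancel entirely, so the diagonal data play no role here. One obtains $(R\rho-\rho R)(\sigma'(w^k))\in\overline{l}_{k+1}$ for $0\leq k\leq r-1$, with $R$ as in \eqref{restriction of traceless connection}. In terms of $\rho_{ij}$, the coefficient of $z^{m-1}\sigma'(w^i)$ in $\rho(\sigma'(w^j))$, this says $[R|_x,\rho]$ is strictly lower triangular.

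The diagonal entries of this commutator (your $\delta$-shifts) do not detect higher superdiagonals. Take $r=3$, $a_2(0)=0$, $\rho(\sigma'(w^2))=c\,z^{m-1}\sigma'(1)$ and $\rho(\sigma'(1))=\rho(\sigma'(w))=0$. All diagonal entries of $[R|_x,\rho]$ vanish, yet $\rho$ is not lower triangular. Only the off-diagonal entry $-c\,a_1(0)$ in position $(0,1)$ forces $c=0$.

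The needed argument uses the whole upper part of the commutator:
\begin{itemize}
\item $R|_x$ is lower triangular nilpotent with all subdiagonal entries equal to $a_1(0)\neq0$.
\item Let $d\geq1$ be the highest superdiagonal on which $\rho$ is nonzero. The entries of $[R|_x,\rho]$ on the $(d-1)$-st superdiagonal are $a_1(0)(\rho_{i-1,j}-\rho_{i,j+1})$ for $j-i=d-1$, with $\rho_{-1,j}=\rho_{i,r}=0$.
\item These entries must vanish. This makes $\rho$ constant along its $d$-th superdiagonal, while the entry at $(0,d-1)$ gives $\rho_{0,d}=0$, a contradiction.
\end{itemize}
Hence $\rho(\sigma'(w^k))\in\overline{l}_k$. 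Then $\rho(N'_0)\subset N'_{rm-r}$ by Lemma~\ref{Lem: recover of framing structure}, so $\rho(N'_i)\subset N'_i$ and $\tau(N'_i)=N'_i$.
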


\begin{proof}
Since we know Proposition \ref {Prop: replacement of diagonal part},
we only have to prove that
$g(N_i)$ is independent of $g$.
So let $g_1,g_2\colon \overline{E}\xrightarrow{\sim}\overline{E}$
be two isomorphisms such that
$g_1(\overline{l}_i)=g_2(\overline{l}_i)=\overline{l}_i$ and both
$(\{g_1(N_i)\},\{\overline{l}_i\})$ and $(\{g_2(N_i)\},\{\overline{l}_i\})$
are $(\nu(w),\boldsymbol{\delta'})$-framing structures.
We want to prove $g_1(N_i)=g_2(N_i)$.
Set $N'_i:=g_1(N_i)$ and $\tau:=g_2\circ g_1^{-1}$.
Then it is sufficient to prove
$\tau(N'_i)=N'_i$ for any $i$.
Take an isomorphism 
$\sigma\colon{\mathcal A}\xrightarrow{\sim}\overline{E}$
which belongs to $N'_0$.
Since $(\{\tau(N'_i)\},\{\overline{l}_i\})$ is a 
$(\nu(w),\boldsymbol{\delta'})$-framing structure 
and $\tau\circ \sigma\in\tau(N'_0)$, we have
\begin{align}
\label {condition of framing structure for sigma}
\begin{split}
 \big(\overline{\nabla}\circ \tau\circ \sigma
-\tau\circ \sigma\circ\nu(w)^{(k)}_{\boldsymbol{\delta'}}
\mathrm{id}_{\mathcal A}\big)
(w^k{\mathcal A})
&=
{\mathcal D}^{(k)}_{\overline{\nabla},\boldsymbol{\delta'}}
(\tau\circ \sigma)(w^k{\mathcal A})
\\
&\subset \overline{l}_{k+1}\otimes\overline{\Omega}(mx)
\end{split}
\end{align}
for $0\leq k\leq r-1$.
Then we have
$\overline{\nabla}\circ \tau \circ \sigma|_{z{\mathcal A}}
=\tau\circ \sigma\circ \nu(w)\mathrm{id}_{\mathcal A}|_{z{\mathcal A}}$
by restricting \eqref {condition of framing structure for sigma}
to $z{\mathcal A}$.
Composing with $\sigma^{-1}$, we have
\begin{equation}
\label {equation from compatibility for sigma}
\sigma^{-1}\circ\overline{\nabla}\circ \tau \circ \sigma|_{z{\mathcal A}}
=
\sigma^{-1}\circ\tau\circ \sigma\circ
\nu(w)\mathrm{id}_{\mathcal A}|_{z{\mathcal A}}.
\end{equation}
Since $\sigma\in N'_0$, we have
\begin{equation}
\label {substitute sigma to compatibility equation}
(\overline{\nabla}\circ \sigma
-\sigma\circ\nu(w)^{(k)}_{\boldsymbol{\delta'}}\mathrm{id}_{\mathcal A})
(w^k{\mathcal A})
=
{\mathcal D}^{(k)}_{\overline{\nabla},\boldsymbol{\delta'}}
(\sigma)(w^k{\mathcal A})
\subset \overline{l}_{k+1}\otimes\overline{\Omega}(mx)
\end{equation}
for $0\leq k\leq r-1$.
Restricting to $z{\mathcal A}$ implies
$\overline{\nabla}\circ \sigma|_{z{\mathcal A}}
=\sigma\circ \nu(w)\mathrm{id}_{\mathcal A}|_{z{\mathcal A}}$.
Applying $\sigma^{-1}$ from the left and 
$\sigma^{-1}\circ\tau\circ \sigma$
from the right, we have
\begin{equation}
\label {compose sigma to compatibility equation}
\sigma^{-1}\circ\overline{\nabla}\circ \tau\circ \sigma|_{z{\mathcal A}}
=
\nu(w)\mathrm{id}_{\mathcal A}\circ \sigma^{-1}
\circ\tau\circ \sigma|_{z{\mathcal A}}.
\end{equation}
Combining \eqref{equation from compatibility for sigma} and 
\eqref{compose sigma to compatibility equation}
yields the equality
\[
\sigma^{-1}\circ\tau\circ \sigma\circ
\nu(w)\mathrm{id}_{\mathcal A}|_{z{\mathcal A}}
=
\nu(w)\mathrm{id}_{\mathcal A}\circ \sigma^{-1}
\circ\tau\circ \sigma|_{z{\mathcal A}},
\]
which is equivalent to
\begin{equation}
\label {commutativity with a(w)-a_0(z)}
\sigma^{-1}\circ\tau\circ \sigma\circ
(a(w)-a_0(z))\mathrm{id}_{\mathcal A}|_{z{\mathcal A}}
=
(a(w)-a_0(z))\mathrm{id}_{\mathcal A}\circ \sigma^{-1}
\circ\tau\circ \sigma|_{z{\mathcal A}}.
\end{equation}
Since $a(w)-a_0(z)=a_1(z)w+\cdots+a_{r-1}(z)w^{r-1}$
with $a_1(z)\in{\mathcal O}_{mx}^{\times}$,
we can express $w$ as a polynomial in $a(w)-a_0(z)$.
So \eqref {commutativity with a(w)-a_0(z)}
implies that 
$
\sigma^{-1}\circ\tau\circ \sigma|_{z{\mathcal A}}:
z{\mathcal A}
\longrightarrow
z{\mathcal A}
$
is a homomorphism of ${\mathcal A}$-modules and we can write
$
\sigma^{-1}\circ\tau\circ \sigma|_{z{\mathcal A}}
=
\alpha(w)\mathrm{id}_{z{\mathcal A}}
$
for some $\alpha(w)\in{\mathcal A}$.
Set
\[
\rho:=\tau-\sigma\circ\alpha(w)\mathrm{id}_{\mathcal A}\circ \sigma^{-1}.
\]
Then the image of $\rho$ is contained in $z^{m-1}\overline{E}$.
In order to prove $\tau(N'_i)=N'_i$,
it is sufficient to prove $\rho(N'_i)\subset N'_i$,
because
$(\sigma\circ\alpha(w)\mathrm{id}_{\mathcal A}\circ \sigma^{-1})(N'_i)=N'_i$
by Lemma \ref {Lem: recover of framing structure}.
Since $\sigma\circ\alpha(w)\mathrm{id}_{\mathcal A}\in N'_0$,
we have
\begin{align*}
(\overline{\nabla}\circ \sigma\circ\alpha(w)\mathrm{id}_{\mathcal A}
-\sigma\circ\alpha(w)\nu(w)^{(k)}_{\boldsymbol{\delta'}}
\mathrm{id}_{\mathcal A})
(w^k{\mathcal A})
&=
{\mathcal D}^{(k)}_{\overline{\nabla},\boldsymbol{\delta'}}
(\sigma\circ\alpha(w)\mathrm{id}_{\mathcal A})(w^k{\mathcal A})
\\
&\subset \overline{l}_{k+1}\otimes\overline{\Omega}(mx).
\end{align*}
Combining with
\eqref{condition of framing structure for sigma},
we have
\begin{equation}
\label {substitute rho to compatibility equation}
(\overline{\nabla}\circ \rho\circ \sigma
-\rho\circ \sigma\circ 
\nu(w)^{(k)}_{\boldsymbol{\delta'}}\mathrm{id}_{\mathcal A})
(w^k{\mathcal A})
\subset
\overline{l}_{k+1}\otimes\overline{\Omega}(mx)
\end{equation}
for $0\leq k\leq r-1$.
Composing $\rho$ to \eqref{substitute sigma to compatibility equation},
we have
\[
(\rho\circ\overline{\nabla}\circ  \sigma
-\rho\circ \sigma\circ\nu(w)^{(k)}_{\boldsymbol{\delta'}}
\mathrm{id}_{\mathcal A})
(w^k{\mathcal A})
\subset
\overline{l}_{k+1}\otimes\overline{\Omega}(mx)
\]
for $0\leq k\leq r-1$.
Combining this with \eqref{substitute rho to compatibility equation},
we have
\[
(R\circ\rho-\rho\circ R)(\sigma(w^k))\in \overline{l}_{k+1}
\]
for $0\leq k\leq r-1$,
where $R$ is given by \eqref{restriction of traceless connection}.
By Proposition \ref{natural representation matrix},
the representation matrix of $R|_x$ 
with respect to the basis
$\sigma(1),\ldots,\sigma(w^{r-1})$ is 
\[
\begin{pmatrix}
0 & 0 & \cdots & 0 \\
a_1(0) & \ddots &  & \vdots \\
\vdots & \ddots &  &  \\
a_{r-1}(0) & \cdots
& a_1(0) & 0
\end{pmatrix}.
\]
Since $a_1(0)\neq 0$, we can see that
$\rho$ is represented by a lower triangular matrix
with respect to the same basis.
This means $\rho(\sigma(w^k))\in \overline{l}_k$ for $0\leq k\leq r-1$
and we have $\rho(N'_0)\subset N'_{rm-r}$
by Lemma \ref {Lem: recover of framing structure}.
Thus we have $\rho(N'_i)\subset N'_i$ for any $i$.
\end{proof}

\subsection{Relationship with generic ramified singularities}
\label {Subsect relation with ram conn}

The aim of this subsection is to prove the equivalence 
between framing structures of Definition~\ref{def: framing structure} 
and ramified structures introduced in \cite{Inaba3}.

The notations $X$, $x\in X$,
$z\in\widehat{\mathcal O}_{X,x}=\mathbb{C}[[z]]$,
$\widehat{\Omega}(mx)$, $\overline{\Omega}(mx)$,
${\mathcal A}$,
$a_i(T)$, $\nu_k(z)$ and $\nu(w)$
are the same as in Section \ref{subsect framing str}.
Set 
\[
 \nu'(w):=\nu(w)-\frac{(r-1)\mathrm{d}z}{2rz}.
\]

Let $\widehat{E}$ be a free $\widehat{\mathcal O}_{X,x}$-module
of rank $r$ and let
$\widehat{\nabla}\colon\widehat{E}
\rightarrow\widehat{E}\otimes\widehat{\Omega}(mx)$
be a formal connection, that is, a $\mathbb{C}$-linear map satisfying
$\widehat{\nabla}(fa)=a\otimes \mathrm{d}f+f\widehat{\nabla}(a)$
for $f\in\widehat{\mathcal O}_{X,x}$ and $a\in\widehat{E}$.
As a connection with ramified singularities,
we consider the following.

\begin{Def} \label {Def: generic ramified connection}
We call $(\widehat{E},\widehat{\nabla})$ a \emph{generic 
$\nu'(w)$-ramified connection}
if there is a formal isomorphism
$
(\widehat{E},\widehat{\nabla})
\cong
(\mathbb{C}[[w]],\mathrm{d}+\nu'(w)),
$
where $\mathbb{C}[[w]]$ is regarded as a free $\mathbb{C}[[z]]$-module
of rank $r$.
\end{Def}

The following notion of a ramified structure characterizes
the formal connection $(\widehat{E},\widehat{\nabla})$ satisfying
Definition \ref{Def: generic ramified connection}.

\begin{Def} 
[{\cite[Definitions 1.2 and 2.1]{Inaba3}} and {\cite[Definition 2.4]{Inaba4}}]
\label {Def: ramified structure}
Let $(\overline{E},\overline{\nabla})$ be a pair consisting of
a free ${\mathcal O}_{mx}$-module $\overline{E}$ of rank $r$
and a homomorphism
$\overline{\nabla}\colon\overline{E}\rightarrow\overline{E}\otimes\overline{\Omega}(mx)$
of $\mathcal{O}_{mx}$-modules.
Set $\bar{\mathcal A}:={\mathcal A}/z^{m-1}w{\mathcal A}$.
A \emph{$\nu'(w)$-ramified structure on $(\overline{E},\overline{\nabla})$}
consists of a tuple
$((V_k,L_k,\pi_k)_{0\leq k\leq r-1},(\phi_k)_{0\leq k\leq r})$
which satisfies the following:
\begin{itemize}
\item[\textup{(i)}]
$\overline{E}=V_0\supset V_1\supset\cdots\supset V_r=zV_0$
is a filtration by ${\mathcal O}_{mx}$-submodules
which satisfies $\length(V_k/V_{k+1})=1$
and $\overline{\nabla}(V_k)\subset V_k\otimes\overline{\Omega}(mx)$
for $0\leq k\leq r-1$,
\item[\textup{(ii)}]
$\pi_k\colon V_k\otimes\bar{\mathcal A}
\rightarrow L_k$ is a quotient free $\bar{\mathcal A}$-module
of rank one, such that the restriction
$\pi_k|_{V_k}\colon V_k\rightarrow L_k$ is surjective for $0\leq k\leq r-1$
and the diagram
\[
\begin{CD}
V_k\otimes\bar{\mathcal A}
@>\pi_k>> L_k
\\
@V \overline{\nabla} VV  @VV \nu'(w)+\frac{k \mathrm{d} z}{rz} V
\\
V_k\otimes\overline{\Omega}(mx)\otimes
\bar{\mathcal A}
@>\pi_k>> L_k\otimes\overline{\Omega}(mx)
\end{CD}
\]
is commutative for $0\leq k\leq r-1$,
\item[\textup{(iii)}]
for $0\leq k\leq r-1$,
$\phi_k\colon L_k\rightarrow L_{k-1}$
and
$\phi_r\colon (z)/(z^{m+1})\otimes L_0\rightarrow L_{r-1}$
are $\bar{\mathcal A}$-homomorphisms  such that 
$\mathrm{Im}(\phi_k)=wL_{k-1}$, $\mathrm{Im}(\phi_{r-1})=wL_{r-1}$,
the diagrams
\[
\begin{CD}
V_k\otimes\bar{\mathcal A} 
@> \pi_k >> L_k
\\
@VVV  @VV \phi_k V
\\
V_{k-1}\otimes\bar{\mathcal A} 
@>\pi_{k-1}>> L_{k-1}
\end{CD}
\]
are commutative for $1\leq k\leq r-1$
and the diagram
\[
\begin{CD}
(z)/(z^{m+1})\otimes V_0\otimes\bar{\mathcal A} 
@> 1\otimes \pi_0 >> (z)/(z^{m+1})\otimes L_0
\\
@VVV  @VV \phi_r V
\\
V_{r-1}\otimes\bar{\mathcal A} 
@>\pi_{r-1}>> L_{r-1}
\end{CD}
\]
is commutative,
\item[\textup{(iv)}]
there is an isomorphism
$\psi_k\colon L_k\xrightarrow{\sim} 
\big(w{\mathcal A}/w^{mr-r+2}{\mathcal A}\big)\otimes L_{k-1}$
of $\bar{\mathcal A}$-modules for $1\leq k\leq r-1$
such that the composition
\[
L_k\xrightarrow[\sim]{\psi_k} 
\big(w{\mathcal A}/w^{mr-r+2}{\mathcal A}\big)\otimes L_{k-1}
\rightarrow wL_{k-1}
\]
coincides with $\phi_k$ and the composition
\begin{align*}
& \big(z{\mathcal A}/z^mw{\mathcal A}\big)\otimes L_0
\xrightarrow{\phi_r} L_{r-1}
\xrightarrow[\sim]{\psi_{r-1}} 
\big(w{\mathcal A}/w^{mr-r+2}{\mathcal A}\big)\otimes L_{r-2}
\\
&\xrightarrow[\sim]{\psi_{r-2}}\cdots
\xrightarrow[\sim]{\psi_1}
\big(w{\mathcal A}/w^{mr-r+2}{\mathcal A}\big)^{\otimes r-1}
\otimes L_0
\xrightarrow{\sim}
\big(w^{r-1}{\mathcal A}/w^{mr}{\mathcal A}\big)\otimes L_0
\end{align*}
coincides with the $\bar{\mathcal A}$-homomorphism
obtained by tensoring $L_0$ to the canonical map
$z{\mathcal A}/w^{mr+1}{\mathcal A}\rightarrow 
w^{r-1}{\mathcal A}/w^{mr}{\mathcal A}$
\end{itemize}
\end{Def}

\begin{Rem} 
\leavevmode
\begin{itemize}
\item[\textup{(1)}]
Let $(\widehat{E},\widehat{\nabla})$ be a generic 
$\nu'(w)$-ramified connection.
Then the restriction $(\widehat{E}|_{mx},\widehat{\nabla}|_{mx})$
has a $\nu'(w)$-ramified structure
via the formal isomorphism
$\theta\colon(\widehat{E},\widehat{\nabla})
\xrightarrow{\sim}(\mathbb{C}[[w]],\mathrm{d}+\nu'(w))$
in Definition \ref{Def: generic ramified connection}.
Indeed, the induced $\nu'(w)$-ramified structure 
is formed by the modules
\[
V_k:=\theta^{-1}(w^k\mathbb{C}[[w]])/z^m\widehat{E},
\quad
L_k:=w^k\mathbb{C}[[w]]/w^{k+1}z^{m-1}\mathbb{C}[[w]]
\quad (0\leq k\leq r-1)
\]
together with the canonically induced homomorphisms $\pi_k$ and $\phi_k$ on them.

\item[\textup{(2)}]
Conversely, if we are given a formal connection
$(\widehat{E},\widehat{\nabla})$ whose restriction
$(\widehat{E}|_{mx},\widehat{\nabla}|_{mx})$ 
has a $\nu'(w)$-ramified structure,
then \cite[Proposition 1.3]{Inaba3} ensures that
$(\widehat{E},\widehat{\nabla})$
is a generic $\nu'(w)$-ramified connection.
Note that the condition~\textup{(iv)} of Definition \ref {Def: ramified structure}
is indispensable in this implication; otherwise, 
the correct formal type cannot be recovered
(see \cite[Appendix]{Inaba3}).
\end{itemize}
\end{Rem}

Let $(\overline{E},\overline{\nabla})$ be a pair consisting of
a free ${\mathcal O}_{mx}$-module $\overline{E}$ of rank $r$
and a homomorphism
$\overline{\nabla}\colon\overline{E}\rightarrow\overline{E}\otimes\overline{\Omega}(mx)$
of $\mathcal{O}_{mx}$-modules.
Set
\[
\boldsymbol{\delta^{\rm ram}}:=
\Big(\frac{1-r}{2r}\,,\,\frac{3-r}{2r}\,,\,\ldots\,,\frac{1+2i-r}{2r}
\,,\,\ldots\,,\,\frac{r-1}{2r}\Big).
\]
When $\boldsymbol{\delta} = \boldsymbol{\delta^{\rm ram}}$, 
a $(\nu(w),\boldsymbol{\delta})$-framing structure corresponds directly to a $\nu'(w)$-ramified structure.
To see this, let us first assume that a 
$(\nu(w),\boldsymbol{\delta^{\rm ram}})$-framing structure
$(\{N_i\},\{ \overline{l}_i\})$ on $(\overline{E},\overline{\nabla})$ is given.
By Lemma \ref{Lem: existence of isomorphism},
there is an isomorphism 
\begin{equation} \label {choice of isomorphism in N_0}
\sigma\colon {\mathcal A} \; 
\xrightarrow{\;\sim\;} \; \overline{E}
\end{equation}
which belongs to $N_0$.
Set
\begin{equation} \label {set of V_k,L_k from framing structure}
V_k=\sigma(w^k{\mathcal A}) \;\subset \;\overline{E},
\quad
L_k:=
w^k{\mathcal A}\big/z^{m-1}w^{k+1}{\mathcal A}
\end{equation}
for $0\leq k\leq r-1$.
Then we have
\[
\overline{E}=V_0\supset V_1\supset\cdots \supset V_{r-1}\supset V_r
=z\overline{E}
\]
and $V_k/V_{k+1}\cong w^k{\mathcal A}/w^{k+1}{\mathcal A}$
as an ${\mathcal O}_{mx}$-module whose length is $1$.
Define a surjective homomorphism
$\pi_k\colon V_k\otimes\bar{\mathcal A}\rightarrow L_k$ by
\begin{equation} \label {set of pi_k from framing structure}
 \pi_k(u\otimes \alpha(w))=\alpha(w)\overline{\sigma^{-1}(u)}
\qquad
(u\in V_k,\;\alpha(w)\in\bar{\mathcal A}),
\end{equation}
where $\bar{\mathcal A}={\mathcal A}/z^{m-1}w{\mathcal A}$.
Then the restriction 
$
\pi_k|_{V_k}\colon V_k\xrightarrow[\sim]{\sigma^{-1}}
w^k{\mathcal A}\rightarrow L_k
$
is surjective.
Let 
\begin{align} \label {set of phi_k from framing structure}
\begin{split}
&\phi_k\colon L_k\rightarrow wL_{k-1}\hookrightarrow L_{k-1}
\qquad (0\leq k\leq r-1)
\\
&\phi_r\colon (z)/(z^{m+1})\otimes L_0
=(z)/(z^{m+1})\otimes\bar{\mathcal A}
\rightarrow z\bar{\mathcal A}=wL_{r-1}
\hookrightarrow L_{r-1}
\end{split}
\end{align}
be the $\bar{\mathcal A}$-homomorphisms induced by the inclusion
$w^k{\mathcal A}\hookrightarrow w^{k-1}{\mathcal A}$
for $0\leq k\leq r$.
Then the diagrams in Definition \ref{Def: ramified structure} (iii)
are commutative.
There is a canonical isomorphism
\[
\psi'_k\colon
\big(w{\mathcal A}/z^{m-1}w^2{\mathcal A}\big)\otimes L_{k-1}
\xrightarrow{\;\sim\;} L_k
\]
of $\bar{\mathcal A}$-modules.
Its inverse $\psi_k:=(\psi'_k)^{-1}$ satisfies
the condition (iv) of  Definition \ref{Def: ramified structure}.
Since $(\{N'_i\},\{\overline{l}_i\})$ is a 
$(\nu(w),\boldsymbol{\delta^{\rm ram}})$-framing structure,
we have
\begin{equation}
\label {inclusion from framing structure}
{\mathcal D}_{\overline{\nabla},\boldsymbol{\delta^{\rm ram}}}^{(k)}(w^k\sigma)
\in \Hom_{{\mathcal O}_{mx}}({\mathcal A},\overline{l}_{k+1})
\otimes\overline{\Omega}(mx)
\end{equation}
for $0\leq k\leq r-1$.
Recall that $\nu'(w)=\nu(w)+(r-1)\mathrm{d}z/2rz$.
So
$\nu(w)^{(k)}_{\boldsymbol{\delta^{\mathrm{ram}}}}=
\nu'(w)+k\mathrm{d}z/rz$.
From \eqref{inclusion from framing structure},
$\overline{\nabla}(V_k)\subset V_k\otimes\overline{\Omega}(mx)$
holds and the diagram
\[
\begin{CD}
V_k @>\sigma^{-1}>\sim > w^k{\mathcal A} @>>>  L_k
\\
@V \overline{\nabla} VV & & @V  \nu'(w)+\frac{k\mathrm{d}z}{rz}  VV
\\
V_k\otimes\overline{\Omega}(mx) @> \sigma^{-1}\otimes\mathrm{id} > \sim > 
w^k{\mathcal A}\otimes\overline{\Omega}(mx)
@>>> L_k\otimes\overline{\Omega}(mx)
\end{CD}
\]
is commutative.
Note that the composition
$V_k\xrightarrow{\sigma^{-1}}w^k{\mathcal A}\rightarrow L_k$
is nothing but the restriction $\pi_k|_{V_k}$.
So the above diagram means the
commutativity of the diagram of (ii) in Definition \ref{Def: ramified structure}.
Thus $((V_k,L_k,\pi_k),(\phi_k))$
satisfies all the conditions of $\nu'(w)$-ramified structure
in  Definition \ref {Def: ramified structure}.

\begin{Lem} \label {Lem: ramified structure from framing structure}
For a $(\nu(w),\boldsymbol{\delta^{\rm ram}})$-framing structure
$(\{N_i\},\{\overline{l}_i\})$, the $\nu'(w)$-ramified structure
$((V_k,L_k,\pi_k),(\phi_k))$ defined by
\eqref{set of V_k,L_k from framing structure},
\eqref{set of pi_k from framing structure}
and
\eqref {set of phi_k from framing structure}
is independent of the choice of 
the isomorphism $\sigma$ in \eqref{choice of isomorphism in N_0}.
\end{Lem}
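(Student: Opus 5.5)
Take two isomorphisms $\sigma,\sigma'\colon{\mathcal A}\xrightarrow{\sim}\overline{E}$ in $N_0$. The plan is to show that $\sigma'$ differs from $\sigma$ by the action of a unit of ${\mathcal A}$, up to a correction that vanishes in all the quotients used to build the ramified structure. Then the data $(V_k,L_k,\pi_k,\phi_k)$ built from $\sigma'$ are identified with those built from $\sigma$ by a canonical $\bar{\mathcal A}$-isomorphism, and this isomorphism is compatible with all the structure maps. Here "independent" is understood, as usual, up to such canonical isomorphism of the $L_k$.

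\textbf{Step 1.} Apply Lemma~\ref{Lem: recover of framing structure} with $i=0$. It gives $N_0={\mathcal A}\sigma+N_{rm-r}$, so we may write $\sigma'=\sigma\circ\alpha(w)\mathrm{id}_{\mathcal A}+h$ with $\alpha(w)\in{\mathcal A}$ and $h\in N_{rm-r}$.

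By \eqref{recover of N_{rm-r+j}}, $h(w^k)\in\overline{l}_k\subset z^{m-1}\overline{E}$. Since $h$ is ${\mathcal O}_{mx}$-linear, we also get $h(w^k{\mathcal A})\subset\overline{l}_k$. Hence $\sigma'\equiv\sigma\circ\alpha(w)$ modulo $z^{m-1}\overline{E}$, and since $\sigma'$ is an isomorphism, $\alpha(w)$ must be a unit.

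\textbf{Step 2: $V_k$.} We check that $V'_k:=\sigma'(w^k{\mathcal A})$ equals $V_k=\sigma(w^k{\mathcal A})$. Clearly $\sigma(\alpha w^k{\mathcal A})=V_k$.

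By Proposition~\ref{natural representation matrix} and its proof, $\overline{l}_k$ is spanned by $z^{m-1}\sigma(w^j)$ for $j\ge k$, so $\overline{l}_k\subset\sigma(z^{m-1}w^k{\mathcal A})\subset V_k$. Therefore $\sigma'(w^k{\mathcal A})\subset V_k$, and equality follows by comparing lengths.

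\textbf{Step 3: $L_k$, $\pi_k$, $\phi_k$.} Define the $\bar{\mathcal A}$-isomorphism $L_k\to L_k$ as multiplication by $\alpha(w)$. We must show $\overline{\sigma'^{-1}(u)}=\alpha^{-1}\overline{\sigma^{-1}(u)}$ in $L_k=w^k{\mathcal A}/z^{m-1}w^{k+1}{\mathcal A}$.

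Write $\sigma'=\sigma\circ(\alpha+\sigma^{-1}h)$. The map $\sigma^{-1}\circ h$ sends $w^k{\mathcal A}$ into $\sigma^{-1}(\overline{l}_k)$, which is the span of $z^{m-1}w^j$ for $j\ge k$. This is the main obstacle: that span contains $z^{m-1}w^k$, which is \emph{not} in $z^{m-1}w^{k+1}{\mathcal A}$. So the correction does not obviously vanish in $L_k$.

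To handle it, I will look at the $z^{m-1}w^k$-coefficient more carefully. Condition (ii) with $h\in N_{rm-r}$ only gives $h(w^k)\in\overline{l}_k$ for each $k$. I would try to strengthen this to the block lower-triangular statement
\[
h(w^k{\mathcal A})\subset\overline{l}_{k+1}+{\mathcal A}\text{-multiple of }\sigma,
\]
using the ${\mathcal A}$-module structure of $N_{rm-r}$. The point is that $wh,w^2h,\dots$ also lie in $N_{rm-r}$, so $h(w^{k+j})=(w^jh)(w^k)\in\overline{l}_k$; this constrains the matrix of $\sigma^{-1}h$ to lie in $z^{m-1}$ times lower-triangular matrices.

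The diagonal part of a lower-triangular matrix that commutes with $w$ (up to these constraints) is constant along the diagonal. So it can be absorbed into $\alpha$ by replacing $\alpha$ with $\alpha+z^{m-1}c$. After this, $\sigma'=\sigma\circ\alpha+h'$ with $h'(w^k{\mathcal A})\subset\overline{l}_{k+1}$, and then the correction does vanish in $L_k$.

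If the diagonal of $h$ is not constant, I would instead invoke the $\mathcal D^{(k)}$ conditions (iii) for both $\sigma$ and $\sigma'$, as in Proposition~\ref{Prop: replacement of diagonal part}. There, changing the diagonal by $\gamma_{b_i}$ shifts $\boldsymbol\delta$ by $\alpha_i=b_ia_1(0)\neq0$, so fixing $\boldsymbol\delta^{\rm ram}$ forces equal diagonal entries.

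\textbf{Step 4: compatibility.} With $\pi'_k=\alpha^{-1}\pi_k$, the maps $\phi_k$ and $\psi_k$, being induced by inclusions and canonical isomorphisms, commute with multiplication by $\alpha^{-1}$. So the two ramified structures are isomorphic via $(\alpha^{-1}\cdot)$, which completes the argument.
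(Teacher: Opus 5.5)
Your Steps 1 and 2 are sound. Step 2 even supplies the check $\sigma'(w^k\mathcal{A})=V_k$, which the paper only asserts. The obstacle you isolate in Step 3 is also real. The gap is in how you resolve it.

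Write $h(w^k)\equiv c_k\,z^{m-1}\sigma(w^k)\pmod{\overline{l}_{k+1}}$ with $c_k\in\mathbb{C}$. Nothing forces the $c_k$ to be equal.
\begin{itemize}
\item By Lemma~\ref{Lem: recover of framing structure} with $j=0$, $N_{rm-r}$ consists of \emph{all} $\mathcal{O}_{mx}$-linear $h$ with $h(w^k)\in\overline{l}_k$. Such $h$ need not commute with $w$, and $w^jh\in N_{rm-r}$ is automatic, so that observation constrains nothing.
\item Condition (iii) does not help either: it holds on all of $N_0$ for one and the same $\boldsymbol{\delta}$. The maps $\gamma_{b_i}$ of Proposition~\ref{Prop: replacement of diagonal part} send $\sigma(w^{i+1})$ to $b_iz^{m-1}\sigma(w^i)$. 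They sit above the diagonal, so $(\mathrm{id}+\gamma_{b_i})\circ\sigma\notin N_0$, and that is why they shift $\boldsymbol{\delta}$.
\end{itemize}

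Concretely, set $h\bigl(\sum_{j=0}^{r-1}b_j(z)w^j\bigr):=b_0(0)\,z^{m-1}\sigma(1)$. This $h$ lies in $N_{rm-r}$ and vanishes on $w\mathcal{A}$. Then $\sigma':=\sigma+h$ is an isomorphism in $N_0$, and $\sigma'^{-1}(\sigma(1))=1-z^{m-1}$. Hence $\pi'_0=(1-z^{m-1})\pi_0$, while $\pi'_k=\pi_k$ for $k\geq 1$.

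Any unit acting on $L_0$ as $1-z^{m-1}$ is $\equiv 1-z^{m-1}$ modulo $w^{rm-r+1}\mathcal{A}$. It therefore acts on $L_1$ as $1-z^{m-1}\neq 1$. So no single $\alpha$ gives $\pi'_k=\alpha^{-1}\pi_k$ for all $k$, and Step~4 is false as stated.

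The paper normalizes $\alpha$ by $w^{r-1}(\sigma'-\alpha(w)\sigma)=0$, which only forces $c_{r-1}=0$. The example satisfies this with $\alpha=1$, so the $\alpha(w)^{-1}$-triangle asserted in the paper needs the same correction at $k=0$.

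What is true, and suffices, is a $k$-dependent statement.
\begin{itemize}
\item For $a\in w^k\mathcal{A}$, the class of $\sigma^{-1}h(a)$ in $L_k=w^k\mathcal{A}/z^{m-1}w^{k+1}\mathcal{A}$ depends only on $a$ modulo $w^{k+1}\mathcal{A}$. This is because $h(w^{k+1}\mathcal{A})\subset\overline{l}_{k+1}=\sigma(z^{m-1}w^{k+1}\mathcal{A})$.
\item That class equals $c_k\,\overline{z^{m-1}a}$. Hence $\pi'_k=(\alpha(w)+c_kz^{m-1})^{-1}\pi_k$, so $\pi'_k$ and $\pi_k$ define the same quotient of $V_k\otimes\bar{\mathcal{A}}$.
\item The dependence on $k$ is harmless. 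Ratios of consecutive such units lie in $1+z^{m-1}\mathcal{A}$, which acts as the identity on $\mathrm{Im}\,\phi_k=wL_{k-1}=w^k\mathcal{A}/z^{m-1}w^k\mathcal{A}$ and on $\mathrm{Im}\,\phi_r=z\bar{\mathcal{A}}$.
\item Therefore the unchanged $\phi_k$ still make the diagrams of Definition~\ref{Def: ramified structure}~(iii) commute. Equivalently, each $\phi_k$ is forced by the $\pi$'s, since $\pi_k$ is surjective. Condition (iv) involves only $L_k$ and $\phi_k$.
\end{itemize}
With Steps 3 and 4 replaced by this per-$k$ argument, your proof establishes the lemma.
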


\begin{proof}
Take another isomorphism
$\sigma'\colon {\mathcal A}\xrightarrow{\sim}\overline{E}$
which belongs to $N_0$.
The equalities \eqref{set of V_k,L_k from framing structure}
and
\eqref {set of phi_k from framing structure}
defining $V_k$, $L_k$ and $\phi_k$ with respect to $\sigma'$
are the same as those with respect to $\sigma$.
Recall that we have
\[
 N_0\;\cong\;
\mathbb{C}[w]/(w^{mr})\oplus\mathbb{C}[w]/(w^{r-1})
\oplus\cdots\oplus\mathbb{C}[w]/(w)
\]
by Lemma \ref  {Lem: structure of Hom from (iv)}.
So there exists an invertible element
$\alpha(w)\in{\mathcal A}^{\times}$
such that
\[
\sigma'-\alpha(w)\sigma \: \in \:
\mathbb{C}[w]/(w^{r-1})\oplus\cdots\oplus\mathbb{C}[w]/(w).
\]
Then $w^{r-1}(\sigma'-\alpha(w)\sigma)=0$.
Let $\pi'_k\colon V_k\otimes\bar{\mathcal A}\rightarrow L_k$
be the homomorphism defined by
\[
 \pi'_k(u\otimes \beta(w))=
\beta(w)\overline{(\sigma')^{-1}(u)}
\qquad
(u\in V_k,\;\beta(w)\in\bar{\mathcal A}).
\]
Since $w^{r-1}(\sigma')^{-1}=w^{r-1}\alpha(w)^{-1}\sigma^{-1}$,
the diagram
\[
\xymatrix{
V_k \ar[rr]^{\sigma^{-1}} \ar[d]_{(\sigma')^{-1}}
&  & w^k{\mathcal A} \ar[r] 
& L_k \ar[ld]^{\alpha(w)^{-1}}
\\
w^k{\mathcal A} \ar[rr] & & L_k
}
\]
is commutative for $0\leq k\leq r-1$.
So the diagram
\[
\xymatrix{
V_k\otimes\bar{\mathcal A} \ar[r]^-{\pi_k} \ar[rd]_{\pi'_k} & 
L_k \ar[d]^{\alpha(w)^{-1}}_{\cong}
\\
& L_k
}
\]
is commutative, which means that $\pi'_k$ coincides with $\pi_k$
as a quotient $\bar{\mathcal A}$-module.
Thus $((V_k,L_k,\pi_k),(\phi_k))=((V_k,L_k,\pi'_k),(\phi_k))$.
\end{proof}

For a $\nu'(w)$-ramified structure
$((V_k,L_k,\pi_k),(\phi_k))$ on $(\overline{E},\overline{\nabla})$,
we will define a corresponding
$(\nu(w),\boldsymbol{\delta^{\rm ram}})$-framing structure.
By 
\cite[Proposition 4.1]{Inaba4} 
and its proof,
there is an isomorphism
\begin{equation} \label {isomorphism from ramified structure}
\tau\colon \overline{E}
\:\xrightarrow{\;\sim\;}\:
{\mathcal A} \,=\, \mathbb{C}[w]/(w^{rm})
\end{equation}
such that the diagram
\begin{equation} \label {commutativity from formal isomorphism}
\begin{CD}
\overline{E} @>\tau>\sim > \mathbb{C}[w]/(w^{rm})
\\
@V \overline{\nabla} VV  @VV \mathrm{d}+\nu'(w)\mathrm{id}_{\mathcal A} V
\\
\overline{E}\otimes\overline{\Omega}(mx) 
@>\tau\otimes\mathrm{id}>\sim >
\mathbb{C}[w]/(w^{rm})\otimes\frac{\mathrm{d}z}{z^m} 
\end{CD}
\end{equation}
is commutative,
$V_k=\tau^{-1}(w^k{\mathcal A})$
for $0\leq k\leq r-1$,
the induced quotient
\[
V_k\otimes_{{\mathcal O}_{mx}}\bar{\mathcal A}
\xrightarrow[\sim]{\tau\otimes\mathrm{id}}
\; w^k{\mathcal A}\otimes_{{\mathcal O}_{mx}}
\bar{\mathcal A}
\longrightarrow
\; w^k{\mathcal A}/z^{m-1}w^{k+1}{\mathcal A}
\]
coincides with $\pi_k$
and $\phi_k$ is induced by the inclusion
$w^k{\mathcal A}\hookrightarrow w^{k+1}{\mathcal A}$.
Put
\begin{equation} \label {set of l_k}
\overline{l}_k:=z^{m-1}V_k\subset \overline{E}
\end{equation}
for $0\leq k\leq r-1$.
Set
\begin{equation} \label {set of N_i}
 N_i:=
\left\{
h\in \Hom_{{\mathcal O}_{mx}}({\mathcal A},\overline{E})
\,\middle|\,
\begin{array}{l}
\text{$\exists\alpha(w)\in w^i{\mathcal A}$ such that}
\\
(h-\tau^{-1}\circ\alpha(w)\mathrm{id}_{\mathcal A})(w^k{\mathcal A})
\subset \overline{l}_{k}
\\
\text{for $0\leq k\leq r-1$}
\end{array}
\right\}
\end{equation}
for $0\leq i\leq rm-r$.
Then we have
\[
\length N_i
=
\length (w^i{\mathcal A})
+\frac{r(r+1)}{2}-\length (w^{rm-r}{\mathcal A})
=
rm-i+\frac{r(r-1)}{2}
\]
for $0\leq i\leq rm-r$.
Set
\begin{equation} \label {set of N_{rm-r+j}}
N_{rm-r+j}
:=
\left\{
h\in \Hom_{{\mathcal O}_{mx}}({\mathcal A},\overline{E})
\,\middle|\,
\begin{array}{l}
\text{$w^{r-j}h=0$ and $h(w^k{\mathcal A})\subset \overline{l}_k$}
\\
\text{for $0\leq k\leq r-1$}
\end{array}
\right\}
\end{equation}
for $0\leq j\leq r$.
Then we have
\[
\length N_{rm-r+j}
=
\frac{r(r+1)}{2}-\frac{j(j+1)}{2}
\]
for $0\leq j\leq r$.
So $(\{N_i\},\{\overline{l}_i\})$ satisfies
the conditions (i) and (ii) of Definition~\ref {def: framing structure}.
By the commutativity of \eqref {commutativity from formal isomorphism},
we have
\begin{align*}
{\mathcal D}^{(k)}_{\overline{\nabla},\boldsymbol{\delta^{\rm ram}}}
(w^k\tau^{-1})(f(w))
&=
\overline{\nabla}(\tau^{-1}(w^kf(w)))
-\tau^{-1}(\nu(w)^{(k)}_{\boldsymbol{\delta^{\rm ram}}}(w^kf(w)))
\\
&\equiv 0  \pmod{\overline{l}_{k+1}\otimes\overline{\Omega}(mx)}
\end{align*}
for any $f(w)\in{\mathcal A}$.
Recall that
$N_0={\mathcal A} \tau^{-1} +N_{rm-r}$.
For any $h\in N_{rm-r}$,
there is $c\in\mathbb{C}$ such that
$h':=h-cz^{m-1}\tau^{-1}$ satisfies
$h'(w^k)\subset \overline{l}_{k+1}$.
So 
\[
{\mathcal D}^{(k)}_{\overline{\nabla},\boldsymbol{\delta^{\rm ram}}}
(w^kN_0) \subset
\Hom({\mathcal A},\overline{l}_{k+1})\otimes\overline{\Omega}(mx)
\]
holds for $0\leq k\leq r-1$.
Moreover, we have
$\tau^{-1}\in N_0$ and $w^{rm-1}\tau^{-1}\neq 0$.
Thus $(\{N_i\},\{\overline{l}_i\})$
is a $(\nu(w),\boldsymbol{\delta^{\rm ram}})$-framing structure.

\begin{Lem} \label {Lem: framing structure from ramified structure}
For a $\nu'(w)$-ramified structure $((V_k,L_k,\pi_k),(\phi_k))$,
the $(\nu(w),\boldsymbol{\delta^{\rm ram}})$-framing structure
$(\{N_i\},\{\overline{l}_i\})$ defined by
\eqref {set of l_k}
\eqref {set of N_i} and \eqref{set of N_{rm-r+j}}
is independent of the choice of the isomorphism
$\tau$ in \eqref {isomorphism from ramified structure}.
\end{Lem}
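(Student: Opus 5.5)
Let $\tau,\tau'\colon\overline{E}\xrightarrow{\sim}{\mathcal A}$ be two isomorphisms as in \eqref{isomorphism from ramified structure}, both adapted to the same ramified structure $((V_k,L_k,\pi_k),(\phi_k))$. The filtration $\overline{l}_k=z^{m-1}V_k$ does not involve $\tau$ at all. Likewise $N_{rm-r+j}$ in \eqref{set of N_{rm-r+j}} depends only on $\overline{l}_k$. It therefore suffices to show that the two definitions \eqref{set of N_i} of $N_i$, for $0\le i\le rm-r$, agree.

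Both families of lengths are equal, so one inclusion is enough. Equivalently, it suffices to show that
\[
\tau'^{-1}-\tau^{-1}\circ\beta(w)\mathrm{id}_{\mathcal A}
\]
maps $w^k{\mathcal A}$ into $\overline{l}_k$ for all $k$, for some unit $\beta(w)\in{\mathcal A}^\times$. Indeed, $\tau'^{-1}\circ\alpha(w)\mathrm{id}_{\mathcal A}$ then differs from $\tau^{-1}\circ\alpha(w)\beta(w)\mathrm{id}_{\mathcal A}$ by a map sending $w^k{\mathcal A}$ into $\overline{l}_k$, because $\alpha(w)$ preserves $w^k{\mathcal A}$. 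Since $\alpha\beta\in w^i{\mathcal A}$ whenever $\alpha\in w^i{\mathcal A}$, this gives $N_i'\subset N_i$.

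To produce $\beta$, consider $\theta:=\tau'\circ\tau^{-1}\colon{\mathcal A}\to{\mathcal A}$.

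1. By \eqref{commutativity from formal isomorphism} for both $\tau$ and $\tau'$, $\theta$ commutes with $\mathrm{d}+\nu'(w)\mathrm{id}_{\mathcal A}$ on $\mathbb{C}[w]/(w^{rm})$.
2. We also have $\theta(w^k{\mathcal A})=w^k{\mathcal A}$, since $V_k=\tau^{-1}(w^k{\mathcal A})=\tau'^{-1}(w^k{\mathcal A})$.
3. Finally, $\theta$ induces the identity on each quotient $L_k=w^k{\mathcal A}/z^{m-1}w^{k+1}{\mathcal A}$ modulo the $\bar{\mathcal A}$-module structure. More precisely, since $\pi_k$ is the same quotient for $\tau$ and $\tau'$, $\theta$ induces an $\bar{\mathcal A}$-linear automorphism of $L_k$, hence multiplication by a unit $\bar\beta_k$. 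Compatibility with the $\phi_k$ (inclusions) forces the $\bar\beta_k$ to agree, giving a single $\bar\beta$.

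Now take $\beta(w)\in{\mathcal A}^\times$ lifting $\bar\beta$ and set $\rho:=\theta-\beta(w)\mathrm{id}_{\mathcal A}$. Step 3 gives $\rho(w^k{\mathcal A})\subset z^{m-1}w^{k+1}{\mathcal A}$. In particular $\rho({\mathcal A})\subset z^{m-1}{\mathcal A}$, so $\rho$ factors through ${\mathcal A}/z{\mathcal A}$.

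What we must show is $\tau^{-1}\circ\rho(w^k{\mathcal A})\subset\overline{l}_k=\tau^{-1}(z^{m-1}w^k{\mathcal A})$, i.e.\ $\rho(w^k{\mathcal A})\subset z^{m-1}w^k{\mathcal A}$. This already follows from the inclusion above. Composing gives
\[
\tau'^{-1}-\tau^{-1}\beta=\tau^{-1}\circ(\theta^{-1}-\beta^{-1}\ldots),
\]
and the symmetric computation with the roles of $\tau,\tau'$ exchanged (or simply applying the above to $\theta^{-1}$) yields the required statement for $\tau'^{-1}-\tau^{-1}\circ\beta'(w)$ with $\beta'=\beta^{-1}$.

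Thus the only real content is step 3: extracting from the equality of the quotients $\pi_k$ and the compatibility with $\phi_k$ that $\theta$ acts on all $L_k$ by one common unit of $\bar{\mathcal A}$. This mirrors the commutative triangle in the proof of Lemma \ref{Lem: ramified structure from framing structure}.

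I expect the main obstacle to be verifying carefully that "$\pi_k$ coincides as a quotient" means equality up to an automorphism of $L_k$. I also need to check that the induced automorphisms of the $L_k$ are the same unit across $k$ (via $\phi_k$ being induced by inclusions) and that the lifting ambiguity of $\beta$, which lies in $z^{m-1}w{\mathcal A}$, is harmless. If step 3 turns out to be delicate, a fallback is to use step 1 as in Proposition \ref{Prop: (nu,c)-framing structure and (nu,n)-framing structure}. On $z{\mathcal A}$, $\theta$ commutes with $\nu(w)$, hence is ${\mathcal A}$-linear there, so $\theta|_{z{\mathcal A}}=\beta(w)$. The remainder $\rho$ then lands in $z^{m-1}{\mathcal A}$, and lower triangularity of $\rho$ comes from commuting with the residue matrix of $R$, exactly as at the end of that proof.
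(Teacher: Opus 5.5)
Your strategy is the paper's: find a unit $\beta(w)$ with $(\theta-\beta(w))(w^k\mathcal{A})\subset z^{m-1}w^k\mathcal{A}$ for all $k$ (the paper's inclusion $(\alpha(w)\tau-\tau')(V_k)\subset z^{m-1}w^k\mathcal{A}$), and then deduce that the two definitions of $N_i$ agree. However, Step 3 as you state it is false, and it is exactly the check you flagged.

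Since $\mathrm{Im}(\phi_k)=wL_{k-1}$ and $\bar{\mathcal A}\cong\mathbb{C}[w]/(w^{rm-r+1})$, compatibility with $\phi_k$ only gives $(\bar\beta_k-\bar\beta_{k-1})w=0$ in $\bar{\mathcal A}$. That is, $\bar\beta_k-\bar\beta_{k-1}\in\mathbb{C}z^{m-1}$, and this ambiguity really occurs. Because $\mathrm{d}$ is a derivation, $[\mathrm{d}+\nu'(w),\beta(w)]=\tfrac{1}{r}z^{m-1}w\beta'(w)\tfrac{\mathrm{d}z}{z^m}$. So when $\beta$ has a linear term, the correction $\rho$ is forced to have non-constant diagonal.

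For instance, take $r=m=2$ and $a(w)=w$, and let $\theta$ be the $\mathcal{O}_{mx}$-linear map of $\mathcal{A}=\mathbb{C}[w]/(w^4)$ with $\theta(1)=1+w$ and $\theta(w)=w+w^2+\tfrac12 w^3$. Then $\theta$ commutes with $\mathrm{d}+\nu'(w)$, preserves $w\mathcal{A}$, and induces $\bar{\mathcal A}$-automorphisms of $L_0,L_1$ compatible with the inclusions. So $\theta\circ\tau$ is an admissible $\tau'$. Yet it acts on $L_0$ by $1+w$ and on $L_1$ by $1+w+\tfrac12 w^2$, which are different units of $\bar{\mathcal A}=\mathbb{C}[w]/(w^3)$. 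Hence there is no single $\bar\beta$, and your inclusion $\rho(w^k\mathcal{A})\subset z^{m-1}w^{k+1}\mathcal{A}$ fails for every choice of $\beta$.

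The error does not affect the conclusion, because you only use the weaker inclusion $\rho(w^k\mathcal{A})\subset z^{m-1}w^k\mathcal{A}$. Take $\beta(w)$ lifting $\bar\beta_0$. Then $\bar\beta_k=\bar\beta_0+c_kz^{m-1}$ with $c_k\in\mathbb{C}$, hence $\rho(w^k\mathcal{A})\subset c_kz^{m-1}w^k\mathcal{A}+z^{m-1}w^{k+1}\mathcal{A}\subset z^{m-1}w^k\mathcal{A}$. This is precisely the paper's formulation, in which $\tau'$ and $\tau$ differ on $L_k$ by $\bar\alpha(w)+\beta_kz^{m-1}$ with $k$-dependent constants $\beta_k$.

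Your final bookkeeping should then read $\tau'^{-1}-\tau^{-1}\circ\beta(w)^{-1}=-\tau^{-1}\circ\theta^{-1}\circ\rho\circ\beta(w)^{-1}$. This maps $w^k\mathcal{A}$ into $\tau^{-1}(z^{m-1}w^k\mathcal{A})=\overline{l}_k$, since $\theta$ preserves $z^{m-1}w^k\mathcal{A}$.

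Your fallback via the argument of Proposition~\ref{Prop: (nu,c)-framing structure and (nu,n)-framing structure} would also work. You would need to note that $\rho$ commutes with the residue only up to the strictly lower triangular term $\tfrac1r z^{m-1}w\beta'(w)$, which is all the triangularity argument needs.
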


\begin{proof}
Take another isomorphism
$\tau'\colon E|_{mx}\xrightarrow{\sim}{\mathcal A}$
with the same property as $\tau$.
Then there exist $\overline{\alpha}(w)\in\mathbb{C}[w]/(z^{m-1}w)$
and $\beta_0,\ldots,\beta_{r-1}\in\mathbb{C}$
such that the diagram
\[
\xymatrix{
V_k \ar[rr]^-{\tau|_{V_k}} \ar[rrd]_{\tau'|_{V_k}}
& & w^k{\mathcal A} \ar[r] & w^k{\mathcal A}/w^{k+rm-r+1}{\mathcal A}
\ar[d]^{\overline{\alpha}(w)+\beta_kz^{m-1}}
\\
&  & w^k{\mathcal A} \ar[r] & w^k{\mathcal A}/w^{k+rm-r+1}{\mathcal A}
}
\]
is commutative for $0\leq k\leq r-1$.
Take a lift $\alpha(w)\in{\mathcal A}$ of $\overline{\alpha}(w)$.
For $0\leq k\leq r-1$, we have
$(\alpha(w)\mathrm{id}_{\mathcal A}\circ\tau-\tau')
(V_k)\subset z^{m-1}w^{k}{\mathcal A}$,
which means
$(\tau^{-1}\circ\alpha(w)^{-1}\mathrm{id}_{\mathcal A}-\tau'^{-1})
(w^k)\subset \overline{l}_k$.
So the submodules 
$N_i\subset
\Hom_{{\mathcal O}_{mx}}({\mathcal A},\overline{E})$
defined by \eqref{set of N_i} and \eqref{set of N_{rm-r+j}}
regarding $\tau$ are the same as those regarding $\tau'$.
\end{proof}

\begin{Prop} \label {Prop: (nu,n)-framing structure and ramified structure}
There is a bijective correspondence between the set of
$(\nu(w),\boldsymbol{\delta^{\rm ram}})$-framing structures
on $(\overline{E},\overline{\nabla})$ and the set of 
$\nu'(w)$-ramified structures on $(\overline{E},\overline{\nabla})$.
\end{Prop}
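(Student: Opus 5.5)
We will show that the two constructions given just before the statement are mutually inverse. Write $\Phi$ for the map sending a $(\nu(w),\boldsymbol{\delta^{\rm ram}})$-framing structure $(\{N_i\},\{\overline{l}_i\})$ to the $\nu'(w)$-ramified structure built from \eqref{set of V_k,L_k from framing structure}, \eqref{set of pi_k from framing structure}, \eqref{set of phi_k from framing structure}. By Lemma \ref{Lem: ramified structure from framing structure}, $\Phi$ is well defined. Write $\Psi$ for the map in the reverse direction given by \eqref{set of l_k}, \eqref{set of N_i}, \eqref{set of N_{rm-r+j}}. By Lemma \ref{Lem: framing structure from ramified structure}, $\Psi$ is well defined. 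It then suffices to check $\Psi\circ\Phi=\mathrm{id}$ and $\Phi\circ\Psi=\mathrm{id}$.

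\textbf{Step 1: $\Phi\circ\Psi=\mathrm{id}$.} Start from a ramified structure and choose $\tau$ as in \eqref{isomorphism from ramified structure}. The framing structure $\Psi(\ldots)$ contains $\tau^{-1}\in N_0$, and $\tau^{-1}$ is an isomorphism. Since $\Phi$ does not depend on the choice of isomorphism in $N_0$, we may compute it with $\sigma:=\tau^{-1}$. Then:
\begin{itemize}
\item $V_k^{\rm new}=\sigma(w^k\mathcal{A})=\tau^{-1}(w^k\mathcal{A})=V_k$;
\item the quotients $\pi_k^{\rm new}(u\otimes\alpha)=\alpha\,\overline{\tau(u)}$ agree with $\pi_k$ by the stated property of $\tau$;
\item the maps $\phi_k$ agree, since both are induced by the inclusions $w^k\mathcal{A}\subset w^{k-1}\mathcal{A}$.
\end{itemize}
Hence $\Phi\Psi$ returns the original tuple.

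\textbf{Step 2: $\Psi\circ\Phi=\mathrm{id}$.} Start from a framing structure $(\{N_i\},\{\overline{l}_i\})$, take $\sigma\in N_0$ as in Lemma \ref{Lem: existence of isomorphism}, and let $\Phi$ give $V_k=\sigma(w^k\mathcal{A})$. The key point is that $\tau:=\sigma^{-1}$ is an admissible choice in \eqref{isomorphism from ramified structure}. The conditions $V_k=\tau^{-1}(w^k\mathcal{A})$, the compatibility with $\pi_k$, and the compatibility with $\phi_k$ hold by construction. The remaining requirement is the commutativity of \eqref{commutativity from formal isomorphism}, namely $\overline{\nabla}\circ\sigma=\sigma\circ(\mathrm{d}+\nu'(w))$. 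Here we would use Proposition \ref{natural representation matrix}: in the basis $\sigma(w^i)$, $\overline{\nabla}$ agrees with the matrix of $\mathrm{d}+\nu'(w)$ on $\mathbb{C}[w]/(w^{rm})$, where $\mathrm{d}(w^i)=\tfrac{i}{r}w^i\,\mathrm{d}z/z$. The only discrepancy lies in the constants $b_{ij}$ below the diagonal in the $\mathrm{d}z/z$-part.

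This is the main obstacle. We would not show $b_{ij}=0$ directly. Instead we modify $\sigma$ inside $N_0$ by a map $\mathrm{id}+\gamma$ with $\gamma(\overline{E})\subset z^{m-1}\overline{E}$ and $\gamma$ strictly lower triangular, in the spirit of the proof of Proposition \ref{Prop: replacement of diagonal part}. The commutator of $\gamma$ with the nilpotent residue part, which has $a_1(0)\neq0$ on the subdiagonal, can be solved recursively to absorb all the $b_{ij}$. Alternatively, one may simply invoke that \cite[Proposition 4.1]{Inaba4} applied to $\Phi(\ldots)$ produces some $\tau$, and argue that this $\tau$ can be chosen so that $\tau^{-1}$ differs from $\sigma$ only by an element $\alpha(w)\in\mathcal{A}^\times$ plus terms landing in $\overline{l}_k$. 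The latter route seems cleaner, using the uniqueness from Lemma \ref{Lem: framing structure from ramified structure}.

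In either case, once we have an admissible $\tau$ with $\tau^{-1}\in N_0$, \eqref{set of l_k} gives $\overline{l}_k^{\rm new}=z^{m-1}V_k=z^{m-1}\sigma(w^k\mathcal{A})=\overline{l}_k$, as shown in the proof of Proposition \ref{natural representation matrix}. The descriptions \eqref{set of N_i} and \eqref{set of N_{rm-r+j}} then coincide with the formulas \eqref{recover of N_i} and \eqref{recover of N_{rm-r+j}} of Lemma \ref{Lem: recover of framing structure}, so $N_i^{\rm new}=N_i$. This proves the bijection.
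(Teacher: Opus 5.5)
Your overall strategy matches the paper's. The paper's proof cites Lemmas~\ref{Lem: ramified structure from framing structure} and \ref{Lem: framing structure from ramified structure} and asserts that the two constructions are mutually inverse, and your Step~1 and the last paragraph of Step~2 correctly fill in that assertion.

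One concrete point in Step~2 is wrong. Your first route, with $\gamma$ strictly lower triangular, cannot absorb the constants $b_{i+1,i}$ on the first subdiagonal. Write $\gamma=z^{m-1}C$ in the basis $\sigma(w^k)$. The matrix of $\overline{\nabla}$ then changes by $[N_0,C]\,\mathrm{d}z/z$, where $N_0=a_1(0)S+a_2(0)S^2+\cdots$ ($S$ the lower shift) is the nilpotent part of the leading coefficient. If $C$ is strictly lower triangular, $[N_0,C]$ has entries only two or more steps below the diagonal; for $r=2$ it is identically zero.

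You must let $C$ have a diagonal part:
\begin{itemize}
\item the $(j+1,j)$ entry of $[N_0,\mathrm{diag}(c_0,\dots,c_{r-1})]$ is $a_1(0)(c_j-c_{j+1})$;
\item the centralizer of $N_0$ has dimension $r$, so lower triangular $C$ reach every strictly lower triangular correction while leaving the diagonal (the $\boldsymbol{\delta^{\rm ram}}$ entries) untouched.
\end{itemize}
This is still legitimate, because then $\gamma\circ\sigma\in N_{rm-r}$. It is exactly the paper's normalization ``replacing $\sigma$ with an addition of $N_{rm-r}$'' in the proof of Proposition~\ref{Prop: rephrase to frame}.

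Your second route avoids normalizing the $b_{ij}$ and is the cleaner argument. However, it needs no special choice of $\tau$, and you should say why it works. The comparison in the proof of Lemma~\ref{Lem: framing structure from ramified structure} uses only that the two maps induce the same $V_k$ and the same quotients $\pi_k$ (and $\phi_k$). It never uses compatibility with $\overline{\nabla}$. Since $\sigma^{-1}$ has these properties by construction, that argument applied to $\tau$ and $\sigma^{-1}$ gives $\alpha(w)\in\mathcal{A}^{\times}$ with $(\tau^{-1}\circ\alpha(w)^{-1}\mathrm{id}_{\mathcal A}-\sigma)(w^k\mathcal{A})\subset z^{m-1}V_k=\overline{l}_k$ for $0\leq k\leq r-1$. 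Hence $\tau^{-1}\in\mathcal{A}\sigma+N_{rm-r}\subset N_0$ for every admissible $\tau$, and your final paragraph finishes the proof.
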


\begin{proof}
By Lemma \ref{Lem: ramified structure from framing structure},
we get a correspondence from
the $(\nu(w),\boldsymbol{\delta^{\rm ram}})$-framing structures
$(\{N_i\},\{\overline{l}_i\})$ to the $\nu'(w)$-ramified structures
$((V_k,L_k,\pi_k),(\phi_k))$.
Lemma \ref{Lem: framing structure from ramified structure}
provides the opposite correspondence
and we can see that these are inverse to each other.
\end{proof}

By 
Proposition \ref {Prop: (nu,c)-framing structure and (nu,n)-framing structure}
and
Proposition \ref {Prop: (nu,n)-framing structure and ramified structure},
we get the following corollary.

\begin{Cor} \label {Cor: framing structure and ramified structure}
Let $\boldsymbol{\delta}=(\delta_0,\ldots,\delta_{r-1})$
be a tuple of complex numbers satisfying \eqref {trace condition}.
Then there is a bijective correspondence between the set of
$(\nu(w),\boldsymbol{\delta})$-framing structures on 
$(\overline{E},\overline{\nabla})$
and the set of $\nu'(w)$-ramified structures on 
$(\overline{E},\overline{\nabla})$.
\end{Cor}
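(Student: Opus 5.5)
The bijection is obtained as a composition of two bijections already established. We pass from $(\nu(w),\boldsymbol{\delta})$-framing structures to $(\nu(w),\boldsymbol{\delta^{\rm ram}})$-framing structures, and from those to $\nu'(w)$-ramified structures.

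First we check that $\boldsymbol{\delta^{\rm ram}}$ satisfies the trace condition \eqref{trace condition}. Its entries are $(1+2i-r)/(2r)$ for $0\leq i\leq r-1$. Their sum is
\[
\frac{1}{2r}\sum_{i=0}^{r-1}(1+2i-r)=\frac{1}{2r}\big(r+r(r-1)-r^2\big)=0.
\]
Hence Proposition~\ref{Prop: (nu,c)-framing structure and (nu,n)-framing structure} applies to the pair $\boldsymbol{\delta}$ and $\boldsymbol{\delta'}:=\boldsymbol{\delta^{\rm ram}}$. It gives a bijection
\[
(\{N_i\},\{\overline{l}_i\})\longmapsto(\{g(N_i)\},\{\overline{l}_i\}),
\]
which is well defined because the image does not depend on the choice of the isomorphism $g$ preserving the $\overline{l}_i$. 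Such a $g$ exists by Proposition~\ref{Prop: replacement of diagonal part}. That $g$ has the form $\mathrm{id}+z^{m-1}\gamma$, and by construction each factor $\mathrm{id}+\gamma_{b_i}$ maps $\overline{l}_k$ into itself. Indeed, $\gamma_{b_i}$ kills $z^{m-1}\overline{E}=\overline{l}_0$ because its image lies in $z^{m-1}\overline{E}$ and $z^{m}=0$.

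The map is a bijection because it has an inverse. Running the same construction with the roles of $\boldsymbol{\delta}$ and $\boldsymbol{\delta^{\rm ram}}$ exchanged gives a map back, and $g^{-1}$ is an admissible choice for the reverse direction. The independence statement in Proposition~\ref{Prop: (nu,c)-framing structure and (nu,n)-framing structure} then shows that the two composites are the identity.

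Second, Proposition~\ref{Prop: (nu,n)-framing structure and ramified structure} gives a bijection between $(\nu(w),\boldsymbol{\delta^{\rm ram}})$-framing structures and $\nu'(w)$-ramified structures on $(\overline{E},\overline{\nabla})$. Composing the two bijections yields the corollary. The only point needing care is the well-definedness and invertibility of the first map, namely that $g$ preserves the filtration $\{\overline{l}_i\}$; this is checked above.
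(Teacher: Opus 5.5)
Your proposal is correct and follows the paper's proof: the corollary is the composite of the bijection in Proposition~\ref{Prop: (nu,c)-framing structure and (nu,n)-framing structure}, taken with $\boldsymbol{\delta'}=\boldsymbol{\delta^{\rm ram}}$, and the bijection in Proposition~\ref{Prop: (nu,n)-framing structure and ramified structure}. Your extra checks are valid but already implicit in those propositions: that $\boldsymbol{\delta^{\rm ram}}$ satisfies the trace condition, and that $g=\mathrm{id}+z^{m-1}\gamma$ fixes $z^{m-1}\overline{E}$ pointwise (by $\mathcal{O}_{mx}$-linearity and $z^{2m-2}=0$ for $m\geq 2$).
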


\subsection{$(\nu,{\mathcal A})$-frame} \label {subsect nu A frame}

In view of Corollary \ref{Cor: framing structure and ramified structure},
we may choose $\boldsymbol{\delta}$ to be
$\boldsymbol{0}=(0,0,\ldots,0)$.
Then we can replace the notion of $(\nu(w),\boldsymbol{0})$-framing structure
with a simpler notion.

\begin{Def} \label {def: (nu,A)-frame}
Let $(\overline{E},\overline{\nabla})$ be a pair consisting of
a free ${\mathcal O}_{mx}$-module $\overline{E}$ of rank $r$
and a homomorphism
$\overline{\nabla}\colon\overline{E}\rightarrow\overline{E}\otimes\overline{\Omega}(mx)$
of $\mathcal{O}_{mx}$-modules.
We call an ${\mathcal A}$-submodule
$L\subset\Hom_{{\mathcal O}_{mx}}({\mathcal A},\overline{E})$
a \emph{$(\nu(w),{\mathcal A})$-frame} on $(\overline{E},\overline{\nabla})$ if
\begin{itemize}
\item[\textup{(i)}]
$L$ is a free $\mathcal{A}$-module of rank one,
\item[\textup{(ii)}]
a generator $\sigma$ of $L$ is an isomorphism
$\sigma\colon{\mathcal A}\xrightarrow{\sim}\overline{E}$
of ${\mathcal O}_{mx}$-modules,
and
\item[\textup{(iii)}]
the equality
$(\sigma^{-1}\otimes\mathrm{id}_{\overline{\Omega}(mx)})
\circ\overline{\nabla}\circ\sigma
=\nu(w)\mathrm{id}_{\mathcal A}$
holds.
\end{itemize}
\end{Def}

\begin{Prop} \label {Prop: rephrase to frame}
There is a bijection between the set of
$(\nu(w),\boldsymbol{0})$-framing structures on $(\overline{E},\overline{\nabla})$
and the set of $(\nu(w),{\mathcal A})$-frames on
$(\overline{E},\overline{\nabla})$.
\end{Prop}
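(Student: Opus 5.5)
We construct maps in both directions and check that they are mutually inverse, using the structural results of Section~\ref{subsect framing str} with $\boldsymbol{\delta}=\boldsymbol{0}$.

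\emph{From a framing structure to a frame.} Let $(\{N_i\},\{\overline{l}_i\})$ be a $(\nu(w),\boldsymbol{0})$-framing structure. By Lemma~\ref{Lem: existence of isomorphism} there is an isomorphism $\sigma\in N_0$. We set $L:={\mathcal A}\sigma$, which is free of rank one because $\sigma$ is injective on ${\mathcal A}$. Condition (ii) of Definition~\ref{def: (nu,A)-frame} holds for $\sigma$, and it holds for any other generator $u\sigma$ with $u\in{\mathcal A}^\times$.

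Condition (iii) does not hold for an arbitrary $\sigma\in N_0$. By Proposition~\ref{natural representation matrix}, the matrix of $\overline{\nabla}$ in the basis $\sigma(w^k)$ equals the matrix of $\nu(w)\mathrm{id}_{\mathcal A}$ plus a strictly lower triangular correction $b_{ij}\,\mathrm{d}z/z$, since the diagonal $\delta$'s vanish. So we must modify $\sigma$ inside $N_0$ by an element of $N_{rm-r}$, that is, $\sigma\mapsto\sigma+h$ with $h(w^k)\in\overline{l}_k$. This kills the $b_{ij}$ column by column, in the same spirit as the $\gamma_{b_i}$ in the proof of Proposition~\ref{Prop: replacement of diagonal part}. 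The commutator of $R|_x$ with a strictly lower triangular $z^{m-1}$-perturbation shifts entries down by one with factor $a_1(0)\neq0$, so we can solve recursively on the subdiagonal index $i-j$.

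This yields a $\sigma_0\in N_0$ with $\sigma_0^{-1}\overline{\nabla}\sigma_0=\nu(w)$. We need $L$ to be independent of the choices. If $\sigma_0,\sigma_1$ both satisfy (iii), then $\tau=\sigma_1\sigma_0^{-1}$ commutes with $\sigma_0\nu(w)\sigma_0^{-1}$. Since $w$ is a polynomial in $a(w)-a_0(z)$, as in the proof of Proposition~\ref{Prop: (nu,c)-framing structure and (nu,n)-framing structure}, $\sigma_0^{-1}\tau\sigma_0$ is ${\mathcal A}$-linear, hence $\sigma_1=\alpha(w)\sigma_0$. Thus $L$ is well defined.

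\emph{From a frame to a framing structure.} Given a frame $L={\mathcal A}\sigma$, we set $\overline{l}_k:=z^{m-1}\sigma(w^k{\mathcal A})$. We define $N_i$ by formulas \eqref{recover of N_i} and \eqref{recover of N_{rm-r+j}} with this $\sigma$, exactly as in \eqref{set of N_i} and \eqref{set of N_{rm-r+j}} with $\tau^{-1}=\sigma$. The length computations and conditions (i), (ii), (iv) follow verbatim from the ramified-structure construction preceding Lemma~\ref{Lem: framing structure from ramified structure}. For condition (iii), ${\mathcal D}^{(k)}_{\overline{\nabla},\boldsymbol{0}}(w^k\sigma)=0$ holds by (iii) of the frame. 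For $h\in N_{rm-r}$, we write $h=cz^{m-1}\sigma+h'$ with $h'(w^k)\in\overline{l}_{k+1}$ and use that $\overline{\nabla}$ acts on $z^{m-1}\overline{E}$ via the nilpotent $R$, which is lower triangular with respect to the $\overline{l}_k$. Replacing $\sigma$ by $\alpha(w)\sigma$ leaves everything unchanged, so the framing structure depends only on $L$.

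\emph{Inverse bijections.} Starting from a frame, the framing structure built above contains $\sigma$ in $N_0$, and $\sigma$ already satisfies (iii), so we recover $L$. Conversely, starting from a framing structure, Lemma~\ref{Lem: recover of framing structure} shows that $N_i$ and $\overline{l}_i$ are determined by any isomorphism in $N_0$, in particular by $\sigma_0$. Hence the composite returns the original structure.

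The main obstacle is the normalization step: showing that the lower triangular constants $b_{ij}$ can be simultaneously removed by a perturbation in $N_{rm-r}$ (not merely in $\mathrm{End}$ fixing the $\overline{l}_k$). The key observation is that these perturbations modify $\overline{\nabla}$ at order $z^{m-1}\cdot z^{-m}\mathrm{d}z=\mathrm{d}z/z$ only through $[R|_x,\cdot\,]$, and the invertibility of $a_1(0)$ makes this commutator surjective onto strictly lower triangular matrices from lower triangular ones.
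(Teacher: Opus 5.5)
Your proposal is correct and follows the paper's route. You normalize an isomorphism $\sigma\in N_0$ by an element of $N_{rm-r}$ so that the constants $b_{ij}$ vanish, get uniqueness of the frame because any $\mathcal{O}_{mx}$-endomorphism of ${\mathcal A}$ commuting with $\nu(w)\mathrm{id}_{\mathcal A}$ is multiplication by some $\alpha(w)$, and go back via the explicit formulas for $\overline{l}_k$ and $N_j$. Along the way you supply what the paper only asserts: the surjectivity of $\mathrm{ad}_{R|_x}$ from lower triangular onto strictly lower triangular matrices (a dimension count, since its kernel is $\mathbb{C}[R|_x]$), and the check that the two maps are mutually inverse.

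Two small precisions are needed.

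\begin{itemize}
\item The first subdiagonal entries $b_{k+1,k}$ can only be removed by the \emph{diagonal} part of the perturbation. A strictly lower triangular perturbation, as in your middle paragraph, only affects subdiagonals $\geq 2$; your final paragraph, which uses lower triangular perturbations, is the correct version.
\item In checking condition (iii) for $h\in N_{rm-r}$, a single constant $c$ with $(h-cz^{m-1}\sigma)(w^k)\in\overline{l}_{k+1}$ for all $k$ exists only when the diagonal entries of $h$ with respect to $\{\overline{l}_k\}$ agree. You should instead take $c=c_k$ depending on $k$ and apply the decomposition to $w^kh$. The paper's analogous sentence in the ramified-structure construction has the same looseness.
\end{itemize}
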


\begin{proof}
Given a $\nu(w)$-framing structure
$(\{N_j\},\{\overline{l}_k\})$ on $(\overline{E},\overline{\nabla})$.
By Lemma \ref {Lem: existence of isomorphism},
there is an isomorphism
$\sigma\colon {\mathcal A}\xrightarrow{\sim}\overline{E}$
which belongs to $N_0$.
Then the representation matrix of 
$\overline{\nabla}$ is of the form
given in Proposition \ref{natural representation matrix},
with $\delta_0=\cdots=\delta_{r-1}=0$.
Replacing $\sigma$ with an addition of $N_{rm-r}$,
we may further assume that the representation matrix of $\overline{\nabla}$
in Proposition \ref{natural representation matrix} satisfies
$b_{ij}=0$ for $i>j$.
Then $\sigma$ satisfies the equality of Definition \ref{def: (nu,A)-frame} (iii).
So 
${\mathcal A}\sigma\subset
\Hom_{{\mathcal O}_{mx}}({\mathcal A},\overline{E})$
becomes a $(\nu(w),{\mathcal A})$-frame on 
$(\overline{E},\overline{\nabla})$.
Take another isomorphism
$\sigma'\colon {\mathcal A}\xrightarrow{\sim}\overline{E}$
such that 
${\mathcal A} \sigma'\subset
\Hom_{{\mathcal O}_{mx}}({\mathcal A},\overline{E})$
is a $(\nu(w),{\mathcal A})$-frame of $(\overline{E},\overline{\nabla})$.
Then the composition 
$\sigma^{-1}\circ \sigma'\colon{\mathcal A}\rightarrow{\mathcal A}$
commutes with $\nu(w)\mathrm{id}_{\mathcal A}$, which implies
the existence of $\alpha(w)\in{\mathcal A}^{\times}$ satisfying
$\sigma^{-1}\circ \sigma'=\alpha(w)\mathrm{id}_{\mathcal A}$.
So the corresponding submodule
${\mathcal A}\sigma={\mathcal A}\sigma'$
of $\Hom_{{\mathcal O}_{mx}}({\mathcal A},\overline{E})$
is independent of the choice of the isomorphism $\sigma$.

Conversely assume that a $(\nu(w),{\mathcal A})$-frame 
$L\subset\Hom_{{\mathcal O}_{mx}}({\mathcal A},\overline{E})$
on $(\overline{E},\overline{\nabla})$ is given.
Take a generator $\sigma$ of $L$, which provides
an isomorphism $\sigma\colon {\mathcal A}\xrightarrow{\sim}\overline{E}$
satisfying the equality of Definition \ref{def: (nu,A)-frame} (iii).
Define $\{\overline{l}_k\}$ by
$\overline{l}_k=z^{m-1}\sigma(w^k{\mathcal A})$
and define $\{N_j\}$ by
\[
N_j:=
\left\{
h\in \Hom_{{\mathcal O}_{mx}}
({\mathcal A}, \overline{E} )
\,\middle|\,
\begin{array}{l}
\exists \alpha(w)\in w^j{\mathcal A},
\;
\text{for $0\leq k\leq r-1$,}
\\
(h-\alpha(w)\sigma)(w^k{\mathcal A})
\subset \overline{l}_k
\end{array}
\right\}
\]
for $0\leq j\leq rm-r$
and
\[
N_{rm-r+j}
=
\left\{
h\in \Hom_{{\mathcal O}_{mx}}
({\mathcal A}, \overline{E})
\,\middle|\,
\begin{array}{l}
\text{$w^{r-j}h=0$ and $h(w^k{\mathcal A})
\subset \overline{l}_k$}
\\
\text{for $0\leq k\leq r-1$}
\end{array}
\right\}
\]
for $0\leq j\leq r$.
Then $(\{N_j\},\{\overline{l}_k\})$ becomes a 
$(\nu(w),\boldsymbol{0})$-framing structure.
\end{proof}

\section{Moduli space of $\lambda$-connections} \label{sect: Moduli space}

The purpose of this section is 
to provide the algebraic construction of 
the moduli space of $\lambda$-connections admitting ramified singularities 
(Theorem~\ref{thm: main_moduli_construction}).

Let $X$ be a smooth projective irreducible
algebraic curve over $\mathbb{C}$ of genus $g$.
Take an effective divisor $D$ on $X$ and write
$ D=\sum_{x\in D} m_xx$.
At each point $x\in D$,
choose a generator $z$ of the maximal ideal of ${\mathcal O}_{X,x}$.
Let $r^x_1,\ldots,r^x_{s_x}$ be positive integers
satisfying $r=r^x_1+\cdots+r^x_{s_x}$.
Set
${\mathcal A}^x_i:=
{\mathcal O}_{m_xx}[w]/(w^{r^x_i}-z)$
for $1\leq i\leq s_x$.
As we saw before Definition~\ref{def parabolic frame in Intro},
the space of local exponents $\Lambda$ is defined by
\[
 \Lambda
 :=
 \left\{
 (\lambda,\nu)
 \,\middle|\,
 \begin{array}{l}
 \lambda\in\mathbb{C}, \;
 \nu \text{ is a generic ramified local exponent data for $\lambda$} \\
 \text{defined in Definition
 \ref{def: generic ramified local exponent data intro}}
 \end{array}
 \right\}.
\]
There is a canonical morphism
$
\Lambda \rightarrow \mathbb{A}^1
$
which sends $(\lambda,\nu)$ to $\lambda$.
For $x\in D$ and  $1\leq i\leq s_x$, let
$\boldsymbol{\delta}^x_i=(\delta^x_{i,0},\ldots,\delta^x_{i,r^x_i-1})$
be a tuple of complex numbers satisfying
$\delta^x_{i,0}+\cdots+\delta^x_{i,r^x_i-1}=0$.
Put
$\boldsymbol{\delta}:=(\boldsymbol{\delta}^x_i)^{x\in D}_{1\leq i\leq s_x}$.

\begin{Def} \label {def: parabolic framing structure}
Take $(\nu,\lambda)\in\Lambda$
and write $\nu = (\nu^{x,i}(w))_{x \in D, \, 1 \leq i \leq s_x}$.
Let $E$ be a vector bundle on $X$ of rank $r$ and degree $d$,
and let $\nabla \colon E \rightarrow E \otimes \Omega^1_X(D)$ be a $\lambda$-connection on $E$.
We say that $(\{l^x_i, N^{x,i}_j, \overline{l}^{x,i}_k\})$
is a \emph{parabolic $(\nu,\boldsymbol{\delta})$-framing structure 
on $(E,\nabla)$} if
\begin{itemize}
\item[\textup{(i)}]
$E|_{m_xx}=l^x_1\supset\cdots\supset l^x_{s_x}\supset l^x_{s_x+1}=0$
is a filtration by ${\mathcal O}_{m_xx}$-submodules
such that $l^x_i/l^x_{i+1}\cong{\mathcal O}_{m_xx}^{\oplus r^x_i}$
and $\nabla|_{m_xx}(l^x_i)\subset l^x_i\otimes\Omega^1_X(D)|_{m_xx}$
for $1\leq i\leq s_x$, and
\item[\textup{(ii)}]
for $V^x_i:=l^x_i/l^x_{i+1}$ and for the homomorphism
$\overline{\nabla}^x_i\colon V^x_i\rightarrow
V^x_i\otimes\Omega^1_X(D)|_{m_xx}$
induced by $\nabla|_{m_xx}$,
a tuple $\{N^{x,i}_j, \overline{l}^{x,i}_k\}$
is a $(\nu^{x,i}(w),\boldsymbol{\delta}^x_i)$-framing structure
on $(V^x_i,\overline{\nabla}^x_i)$
in the sense of Definition \ref{def: framing structure}.
\end{itemize}
\end{Def}

\begin{Rem}
Proposition~\ref{Prop: (nu,c)-framing structure and (nu,n)-framing structure} yields that
a parabolic $(\nu,\boldsymbol{\delta})$-framing structure 
is equivalent to a $(\nu,\boldsymbol{0})$-framing structure,
where $\boldsymbol{0}=(\boldsymbol{0}^x_i)^{x\in D}_{1\leq i\leq s_x}$
and $\boldsymbol{0}^x_i=(0,\ldots,0)$.
So one may assume $\boldsymbol{\delta}=\boldsymbol{0}$
for the construction of the moduli space of $\lambda$-connections
with a parabolic $(\nu,\boldsymbol{\delta})$-framing structure.
\end{Rem}

Proposition \ref{Prop: rephrase to frame} yields the following.

\begin{Prop} \label {Prop: framing-frame correspondence}
Let $E$ be a vector bundle on $X$ 
and let $\nabla\colon E\rightarrow E\otimes\Omega^1_X(D)$
be a $\lambda$-connection.
Then there is a bijective correspondence between the set of
parabolic $(\nu,{\mathcal A})$-frames on $(E,\nabla)$
and the set of parabolic $(\nu,\boldsymbol{0})$-framing structures
on $(E,\nabla)$.
\end{Prop}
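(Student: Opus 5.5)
The correspondence will be built pointwise and blockwise, and then the local bijection of Proposition~\ref{Prop: rephrase to frame} will be applied to each graded piece. Both a parabolic $(\nu,\mathcal{A})$-frame (Definition~\ref{def parabolic frame in Intro}) and a parabolic $(\nu,\boldsymbol{0})$-framing structure (Definition~\ref{def: parabolic framing structure}) consist of two kinds of data. The first is the same global-to-local datum: a $\nabla|_{m_xx}$-invariant filtration $E|_{m_xx}=l^x_1\supset\cdots\supset l^x_{s_x+1}=0$ with $l^x_i/l^x_{i+1}\cong\mathcal{O}_{m_xx}^{\oplus r^x_i}$, subject to identical conditions (i) in both definitions. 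The second is, for each $x\in D$ and $1\le i\le s_x$, local data on the pair $(V^x_i,\overline{\nabla}^x_i)$ with $V^x_i=l^x_i/l^x_{i+1}$. So I fix the filtration $\{l^x_i\}$, and it suffices to produce, for each $(x,i)$, a bijection between $(\nu^{x,i}(w),\mathcal{A}^x_i)$-frames on $(V^x_i,\overline{\nabla}^x_i)$ and $(\nu^{x,i}(w),\boldsymbol{0}^x_i)$-framing structures on the same pair. Taking the product over all $(x,i)$ and then the union over all admissible filtrations gives the claimed bijection.

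For each block I check that $(V^x_i,\overline{\nabla}^x_i)$ satisfies the hypotheses of Section~\ref{sect: Framing str} with $m=m_x$, $r=r^x_i$ and $\mathcal{A}=\mathcal{A}^x_i$.
\begin{itemize}
\item $V^x_i$ is a free $\mathcal{O}_{m_xx}$-module of rank $r^x_i$.
\item $\overline{\nabla}^x_i$ is $\mathcal{O}_{m_xx}$-linear, because the $\lambda$-term $\lambda a\otimes \mathrm{d}f$ vanishes after restriction to $m_xx$ when the poles are of order $m_x$: $\mathrm{d}f$ is holomorphic and lies in $\mathfrak{m}_x^{m_x}\Omega^1_X(D)$ for $f\in\mathfrak{m}_x^{m_x}$. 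The paper already uses this implicitly in treating $\nabla|_{m_xx}$ as $\mathcal{O}_{m_xx}$-linear.
\item Condition (ii) of Definition~\ref{def: generic ramified local exponent data intro} gives $a^{x,i}_1(0)\neq0$ whenever $r^x_i\ge2$, which is what the results of Section~\ref{subsect framing str} need.
\end{itemize}

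The degenerate cases need separate treatment. When $r^x_i=1$, I have $\mathcal{A}^x_i=\mathcal{O}_{m_xx}$ and $\nu^{x,i}=\nu_0$. I would argue directly that both notions reduce to the requirement $\overline{\nabla}^x_i=\nu_0\,\mathrm{id}$ together with the set of all generators. For the frame this is immediate. For the framing structure, $N_0$ is forced to be the whole module, of length $m$, with $N_j=z^jN_0$ and $\overline{l}_0=z^{m-1}V$. When $m_x=1$ the standing assumption forces $r^x_i=1$ anyway, so the case $m=1$, which Section~\ref{sect: Framing str} excludes since it assumes $m\ge2$, is covered by this remark.

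In the remaining case $r^x_i\ge2$, $m_x\ge2$, Proposition~\ref{Prop: rephrase to frame} applies verbatim and gives the bijection. The main point to verify is that condition (ii-3) of Definition~\ref{def parabolic frame in Intro} matches condition (iii) of Definition~\ref{def: (nu,A)-frame}, which is simply a transcription. I expect no real obstacle. The only care needed is the bookkeeping above: that the correspondence is compatible with fixing the filtration, and that the degenerate case $r^x_i=1$ is handled explicitly.
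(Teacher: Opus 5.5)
Your proposal is correct and follows the paper's argument: the paper deduces this statement directly from Proposition~\ref{Prop: rephrase to frame}, applied to each graded piece $(l^x_i/l^x_{i+1},\overline{\nabla}^x_i)$ once the common filtration $\{l^x_i\}$ is fixed. Your separate treatment of the degenerate cases $r^x_i=1$ and $m_x=1$, which the paper leaves implicit, is accurate, and it is a worthwhile check because Section~\ref{sect: Framing str} assumes $m\geq 2$ and $a_1(0)\neq 0$.
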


For a parabolic $(\nu,\boldsymbol{0})$-framing structure
$\{l^x_i,N^{x,i}_j,\overline{l}^{x,i}_k\}$
on a $\lambda$-connection 
$(E,\nabla)$, define
\begin{equation}\label{definition of extended filtration}
\tilde{l}^{x,i}_0:=l^x_i, \qquad
\tilde{l}^{x,i}_k:=
\ker\big(l^x_i\rightarrow (l^x_i/l^x_{i+1})/\overline{l}^{x,i}_k\big)
\quad (1\leq k\leq r^x_i)
\end{equation}
for $x\in D$ and $1\leq i\leq s_x$.

\begin{Def} \label {def: stability of lambda connection}
Let $(E,\nabla,\{l^x_i,N^{x,i}_j,\overline{l}^{x,i}_k\})$
be a  $\lambda$-connection 
with a parabolic $(\nu,\boldsymbol{0})$-framing structure.
Take the filtration $\{\tilde{l}^{x,i}_k\}$ on $E|_{m_xx}$
defined by \eqref{definition of extended filtration}.
Let 
$\alpha=(\alpha^{x,i}_k)^{x\in D}_{1\leq i\leq s_x,0\leq k\leq r^x_i-1}$
be a tuple of positive rational numbers such that
\[
0<\alpha^{x,1}_0<\cdots<\alpha^{x,1}_{r^x_1-1}<
\alpha^{x,2}_0<\alpha^{x,2}_1<\cdots<\alpha^{x,2}_{r^x_2-1}<\cdots
<\alpha^{x,s_x}_0
<\cdots<\alpha^{x,s_x}_{r^x_{s_x}-1}<1
\]
for $x\in D$ and
$\alpha^{x,i}_k\neq \alpha^{x',i'}_{k'}$
for $(x,i,k)\neq(x',i',k')$.
We say that 
$(E,\nabla,\{l^x_i,N^{x,i}_j,\overline{l}^{x,i}_k\})$
is \emph{$\alpha$-stable} if
for any nontrivial subbundle $0\neq F\subsetneq E$
satisfying $\nabla(F)\subset F\otimes\Omega^1_X(D)$,
the inequality
\begin{align*}
 &\frac{\deg F+\sum_{x\in D}\sum_{i=1}^{s_x}\sum_{k=0}^{r^x_i-1}
 \alpha^{x,i}_k
 \length\big((F|_{m_xx}\cap \tilde{l}^{x,i}_k)/(F|_{m_xx}\cap \tilde{l}^{x,i}_{k+1})\big)}
 {\rank F}
\\
&<
\frac{\deg E+\sum_{x\in D}\sum_{i=1}^{s_x}\sum_{k=0}^{r^x_i-1}
 \alpha^{x,i}_k
 \length\big(\tilde{l}^{x,i}_k/\tilde{l}^{x,i}_{k+1}\big)} {\rank E}
\end{align*}
holds.
We say that $(E,\nabla,\{l^x_i,N^{x,i}_j,\overline{l}^{x,i}_k\})$
is \emph{$\alpha$-semistable} if
the weaker inequality ``$\,\leq$'' instead of ``$\,<$'' holds.
\end{Def}

\begin{Def} \label {def: moduli functor: parabolic framing structure}
Denote the category of sets by $(\mathrm{Sets})$
and the category of noetherian schemes over $\Lambda$ by
$(\mathrm{Sch}/\Lambda)$.
Define a functor
$
 {\mathcal M}^{\alpha}_{\mathrm{Hod}}
 \colon (\mathrm{Sch}/\Lambda) \longrightarrow(\mathrm{Sets})
$
by
\[
{\mathcal M}^{\alpha}_{\mathrm{Hod}}(T)
=
\left\{
(E,\nabla,\{l^x_i,N^{x,i}_j,\overline{l}^{x,i}_k\})
\,\middle|\,
\begin{array}{l}
\text{flat family of $\alpha$-stable $\lambda$-connections}
\\
\text{with a parabolic $(\nu,\boldsymbol{0})$-framing structure}
\\
\text{on $X\times T$ over $T$}
\end{array}
\right\}\Big/\!\sim,
\]
for $T\in(\mathrm{Sch}/\Lambda)$ and
$(\nu,\lambda)\in\Lambda(T)$ is determined by
the morphism $T\rightarrow\Lambda$.
Here we define
$(E,\nabla,\{l^x_i,N^{x,i}_j,\overline{l}^{x,i}_k\})\sim
(E',\nabla',\{(l')^x_i,(N')^{x,i}_j,\overline{(l')}^{x,i}_k\})$
if there is a line bundle ${\mathcal L}$ on $T$ satisfying
\[
(E,\nabla,\{l^x_i,N^{x,i}_j,\overline{l}^{x,i}_k\})
\cong
(E',\nabla',\{(l')^x_i, (N')^{x,i}_j, \overline{(l')}^{x,i}_k\})
\otimes{\mathcal L}.
\]
\end{Def}

According to Proposition \ref{Prop: framing-frame correspondence},
Theorem~\ref{thm: main_moduli_construction} follows from the following theorem.

\begin{Thm}  \label {Thm: existence of moduli space}
There exists a coarse moduli scheme
$M^{\alpha}_{\mathrm{Hod}}$ of the moduli functor
${\mathcal M}^{\alpha}_{\mathrm{Hod}}$
defined in Definition~\ref{def: moduli functor: parabolic framing structure}.
Furthermore, $M^{\alpha}_{\mathrm{Hod}}$
is a quasi-projective scheme over $\Lambda$
and it represents the \'etale sheafification
of the moduli functor
${\mathcal M}^{\alpha}_{\mathrm{Hod}}$.
\end{Thm}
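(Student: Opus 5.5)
We will construct $M^{\alpha}_{\mathrm{Hod}}$ by the standard GIT method (Simpson, Inaba–Iwasaki–Saito, Inaba–Saito, Inaba), adapted to $\lambda$-connections over the base $\Lambda$. By Proposition~\ref{Prop: framing-frame correspondence} and the Remark after Definition~\ref{def: parabolic framing structure}, we may parametrize parabolic $(\nu,\boldsymbol{0})$-framing structures directly as filtrations and submodules. These are closed conditions in products of Grassmannians/Quot schemes.

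\textbf{Step 1: Boundedness.} Show that the family of $\alpha$-semistable $\lambda$-connections with parabolic framing structure, for $(\lambda,\nu)$ ranging over $\Lambda$ with fixed $r,d$, is bounded. For a subbundle $F$, the usual argument applies: either $F$ is $\nabla$-invariant, or the second fundamental form $F\to (E/F)\otimes\Omega^1_X(D)$ is $\mathcal{O}_X$-linear. This is true for every $\lambda$, since the $\lambda\,\mathrm{d}f$ term cancels. It bounds the Harder–Narasimhan slopes uniformly in $\lambda$ (as in Simpson's $\Lambda$-module framework). Hence there is $n_0$ with $E(n)$ globally generated and $H^1(E(n))=0$ for all such objects and all $n\ge n_0$.

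\textbf{Step 2: Parameter scheme.} Fix $N=\chi(E(n))$ and take the Quot scheme $Q$ of quotients $\mathcal{O}_X(-n)^{\oplus N}\to E$, restricted to the open locally free locus with $H^0$-isomorphism. Over $Q\times\Lambda$, the $\lambda$-connections form a closed subscheme of a linear fibration. It is cut out by the Leibniz rule with parameter $\lambda$; following Simpson one encodes $\lambda$-connections as modules over the sheaf of rings $\Lambda^{\mathrm{dR}}_{\lambda}$ (a split almost polynomial ring of differential operators with a $\lambda$-twist), which is representable over $\mathbb{A}^1$. Then we add the framing data. The filtrations $l^x_i$, $\overline{l}^{x,i}_k$ and submodules $N^{x,i}_j\subset\Hom(\mathcal{A}^x_i,l^x_i/l^x_{i+1})$ live in relative flag/Quot schemes, and conditions (i)–(iv) of Definition~\ref{def: framing structure} are closed. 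The one exception is (iv), which is open. Together with the compatibility with $\nu^{x,i}$ pulled back from $\Lambda$, this gives a quasi-projective $\Lambda$-scheme $R$ with a $PGL_N$-action.

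\textbf{Step 3: GIT quotient.} Embed $R$ equivariantly into a product of Grassmannians using the usual Gieseker-type morphism, with weights determined by $\alpha$ on the parabolic part. The key step is to show that $\alpha$-(semi)stability coincides with GIT (semi)stability for the linearization determined by $\alpha$. I expect this to be the main obstacle, since the weights must account for the length contributions of the extended filtration $\tilde{l}^{x,i}_k$ of (\ref{definition of extended filtration}) over the non-reduced scheme $m_xx$. One also needs uniformity in $\lambda$ near $\lambda=0$. I would follow Inaba's argument for parabolic connections, using Simpson's relative GIT over $\mathbb{A}^1$ to handle $\lambda$ uniformly. Quotienting the stable locus gives $M^{\alpha}_{\mathrm{Hod}}=R^{s}/PGL_N$, which is quasi-projective over $\Lambda$.

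\textbf{Step 4: Representability of the étale sheafification.} Stability implies simple automorphisms: the endomorphisms preserving the structure are scalars. Hence $PGL_N$ acts freely on $R^s$ and $R^s\to M^{\alpha}_{\mathrm{Hod}}$ is a principal $PGL_N$-bundle by Luna's étale slice theorem. The universal family descends étale locally, up to the line-bundle twist in Definition~\ref{def: moduli functor: parabolic framing structure}. Therefore $M^{\alpha}_{\mathrm{Hod}}$ is a coarse moduli scheme representing the étale sheafification.
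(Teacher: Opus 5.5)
Your outline works, but it is organized differently from the paper's, and the difference falls exactly at the step you flag as the main obstacle. The paper never runs GIT on a parameter scheme containing the framing data.

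Instead, following \cite[Sections 2.4 and 5]{Inaba-Iwasaki-Saito1}, it sends $(E,\nabla,\{l^x_i,N^{x,i}_j,\overline{l}^{x,i}_k\})$ to the parabolic $\tilde{\Lambda}^1_{D/\Lambda}$-tuple $(E,E,\Phi,F_*(E))$:
\begin{itemize}
\item $F_*(E)$ is built from the extended filtration $\tilde{l}^{x,i}_k$.
\item The $t$-twisted bimodule structure on $\tilde{\Lambda}^1_{D/S}$ treats all $\lambda$, including $\lambda=0$, at once.
\item With $\alpha'_i=\frac{\beta_1+\beta_2}{\beta_1}\tilde{\alpha}_i$ and $\gamma\gg 0$, $\alpha$-stability is equivalent to $(\alpha',\beta,\gamma)$-stability.
\end{itemize}
So the GIT construction and the stability comparison are imported wholesale from \cite[Theorem 5.1]{Inaba-Iwasaki-Saito1}, with $\Lambda^1_D$ replaced by $\tilde{\Lambda}^1_{D/S}$. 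This yields a moduli space of tuples that is projective over $\Lambda$.

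Since stability only sees $(E,\nabla,\{\tilde{l}^{x,i}_k\})$, the submodules $N^{x,i}_j$ are added only after the quotient, as a flag bundle $\mathbf{Flag}$ over that projective moduli space. The framing conditions are closed conditions plus open ones such as (iv) and freeness of $l^x_i/l^x_{i+1}$, so the moduli functor embeds as a locally closed subfunctor. Quasi-projectivity and representability of the \'etale sheafification are then inherited directly from the ambient spaces.

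Carried out fully, your route would amount to reproving \cite[Theorem 5.1]{Inaba-Iwasaki-Saito1} in this setting. It would also force you to handle the $N$-flags inside the GIT problem, for instance via a relatively ample bundle on the flag part twisted by a high power of the base linearization. Its advantage is self-containment. The paper's route buys brevity and a projective ambient space in which $M^{\alpha}_{\mathrm{Hod}}$ sits as a locally closed subscheme. That ambient space consists of the triples $(E_1,E_2,\Phi)$ with $\Phi|_{E_1}$ not required to be an isomorphism.
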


\begin{Rem} \label {rem: formal data of parabolic framing structure} 
For 
$(E,\nabla,\{l^x_i,N^{x,i}_j,\overline{l}^{x,i}_k\})
\in M^{\alpha}_{\mathrm{Hod}}$
and a point $x \in D$ with $m_x \geq 2$,
its formal completion at $x$ has a decomposition
\[
 (\widehat{E}_x,\widehat{\nabla}_x)
 \cong
 \bigoplus_{i=1}^{s_x} (\widehat{E}^x_i,\widehat{\nabla}^x_i)
\]
such that $(\widehat{E}^x_i,\widehat{\nabla}^x_i)|_{m_x x}\cong
(l^x_i/l^x_{i+1},\overline{\nabla}^x_i)$.
If $\lambda=1$, we have
\[
(\widehat{E}^x_i,\widehat{\nabla}^x_i)
\cong
(\mathbb{C}[[w]],\mathrm{d}+\nu'^{x,i}(w)), 
\]
where $\nu'^{x,i}(w):=\nu^{x,i}(w)-\frac{(r^x_i-1){\rm d} z}{2r^x_iz}$.
\end{Rem}

\begin{Rem} 
When $D \neq \emptyset$, if $m_x \geq 2$ for some $x \in D$, we set
\[
E' := \ker \!\big( E \rightarrow \widehat{E}^x_i|_{m_x x}/\overline{l}^{x,i}_k \big)
\]
for a suitable choice of indices $i$ and $k$, using the formal decomposition in 
Remark~\ref{rem: formal data of parabolic framing structure}.
Then we obtain a $\lambda$-connection $(E',\nabla')$ equipped 
with a canonical parabolic framing structure such that $\operatorname{rank} E'$ 
and $\operatorname{deg} E'$ are coprime.
Thus, an argument similar to the proof of \cite[Theorem 2.1]{Inaba1} 
yields that $M^{\alpha}_{\mathrm{Hod}}$ is a fine moduli scheme, when $D \neq \emptyset$.
\end{Rem}

\subsection{Construction of the moduli space}\label{subsect: Moduli space 2}

The aim of this section is to prove 
Theorem \ref{Thm: existence of moduli space}.
We use the method of \cite[Section 5]{Inaba-Iwasaki-Saito1}
with a slight modification.

Let $X\rightarrow S$ be a smooth projective morphism
of noetherian schemes such that
each geometric fiber is an irreducible curve of genus $g$. 
Assume that a morphism $S\rightarrow\mathbb{A}^1$ is given.
Let $D\subset X$ be a relative effective Cartier divisor over $S$.
Fix a relatively ample line bundle ${\mathcal O}_X(1)$ on $X$ over $S$.
Set
\[
\tilde{\Lambda}^1_{D/S}:=
{\mathcal O}_{X}
\oplus
\big(\Omega^1_{X/S}
(D_{S})\big)^{\vee}
\]
and define a structure of ${\mathcal O}_{X}$-bimodule by
\[
 (a,v)f=(fa+t\langle \mathrm{d}f,v\rangle,fv), \quad
 f(a,v)=(fa,fv)
\]
for $f,a\in{\mathcal O}_{X}$,
$v\in\big(\Omega^1_{X/S}(D)\big)^{\vee}$,
where $t$ is the canonical coordinate of $\mathbb{A}^1$.
Fix a geometric point $\lambda$ of $\mathbb{A}^1$
and a geometric point $s$ of $S$ over $\lambda$.
Denote the fiber of $X,D$ over $s$ by $X_s,D_s$, respectively.
Denote the restriction
$\tilde{\Lambda}^1_{D/S}|_{X_s\times\{\lambda\}}$ by
$\Lambda^1_{D_s,\lambda}$.
Then giving a $\lambda$-connection
$\nabla\colon E\rightarrow E\otimes\Omega^1_{X_s}(D_s)$
on a vector bundle $E$ on $X_s$ is equivalent to giving a homomorphism
$\Phi\colon \Lambda^1_{D_s,\lambda}\otimes E\rightarrow E$
of left ${\mathcal O}_{X_s}$-modules
such that the composition
$
E={\mathcal O}_{X_s}\otimes E\hookrightarrow
\Lambda^1_{D_s,\lambda}\otimes E\rightarrow E
$
coincides with the identity $\mathrm{id}_E$.

\begin{Def}
We say that
$(E_1,E_2,\Phi,F_*(E_1))$ is a \emph{parabolic $\Lambda^1_{D_s,\lambda}$-tuple} if
\begin{itemize}
\item[{\rm (i)}] $E_1,E_2$ are vector bundles on $X_s$,
\item[{\rm (ii)}]
$\Phi\colon \Lambda^1_{D_s,\lambda}\otimes E_1\rightarrow E_2$
is a homomorphism of left ${\mathcal O}_{X_s}$-modules,
\item[{\rm (iii)}]
$E_1=F_1(E_1)\supset F_2(E_1)\supset\cdots\supset
F_l(E_1)\supset F_{l+1}(E_1)=E_1(-D_s)$
is a filtration by coherent subsheaves.
\end{itemize}
\end{Def}

\begin{Rem} 
Let $(E_1,E_2,\Phi,F_*(E_1))$ be a $\Lambda^1_{D_s,\lambda}$-tuple
such that the composition
\[
\phi\colon E_1={\mathcal O}_{X_s}\otimes E_1
\hookrightarrow \Lambda^1_{D_s,\lambda}\otimes E_1
\xrightarrow{\;\Phi\;} E_2
\]
is an isomorphism. 
Then 
$\phi^{-1}\circ\Phi\colon
\Lambda^1_{D_s,\lambda}\otimes E_1\rightarrow E_1$
corresponds to a $\lambda$-connection.
Since $(E_1,E_2,\Phi,F_*(E_1))\cong(E_1,E_1,\phi^{-1}\circ\Phi,F_*(E_1))$
holds, $(E_1,E_2,\Phi,F_*(E_1))$ can be regarded as a $\lambda$-connection.
\end{Rem}

\begin{Def}
Let $\alpha'=(\alpha_i)_{1\leq i\leq l+1}$ be a tuple of rational numbers satisfying
$0\leq \alpha'_1<\alpha'_2<\cdots<\alpha'_l<\alpha'_{l+1}=1$,
$\beta=(\beta_1,\beta_2)$ a pair of positive integers
and $\gamma> 0$ a large positive rational number.
\begin{itemize}
\item[{\rm (1)}]
For a parabolic $\Lambda^1_{D_s,\lambda}$-tuple
$(E_1,E_2,\Phi,F_*(E_1))$,
set
\begin{align*}
\mathrm{pardeg}(E_1,E_2,\Phi,F_*(E_1))
&:=
\beta_1\deg E_1(-D_s)+\beta_2(\deg E_2
-\gamma\deg{\mathcal O}_{X_s}(1)\rank E_2)
\\
&\qquad
+\sum_{i=1}^l\beta_1\alpha'_i\length (F_i(E_1)/F_{i+1}(E_1)).
\end{align*}
\item[{\rm (2)}]
We say that a parabolic $\Lambda^1_{D_s,\lambda}$-tuple
$(E_1,E_2,\Phi,F_*(E_1))$ is
\emph{$(\alpha',\beta,\gamma)$-stable} if
\[
\frac{\mathrm{pardeg}(E'_1,E'_2,\Phi',F_*(E'_1)) }
{\rank E'_1+\rank E'_2}
<\frac{\mathrm{pardeg}(E_1,E_2,\Phi,F_*(E_1))}
{\rank E_1+\rank E_2}
\]
holds for any sub-tuple
$(E'_1,E'_2,\Phi',F_*(E'_1))\subset(E_1,E_2,\Phi,F_*(E_1))$,
which means that
$E'_1\subset E_1$, $E'_2\subset E_2$ are coherent subsheaves,
$\Phi'\colon \Lambda^1_{D_s,\lambda}\otimes E'_1\rightarrow E'_2$
is a homomorphism of ${\mathcal O}_{X_s}$-modules satisfying
$\Phi|_{\Lambda^1_{D_s,\lambda}\otimes E'_1}=\Phi'$
and $F_*(E'_1)$ is a filtration of
$E'_1$ satisfying $F_i(E'_1)\subset F_i(E_1)$ for any $i$.
We say $(E_1,E_2,\Phi,F_*(E_1))$ is
\emph{$(\alpha',\beta,\gamma)$-semistable} if the weaker inequality
`$\leq$' holds instead of `$<$'.
\end{itemize}
\end{Def}

\begin{Def}
Denote the category of sets by $(\mathrm{Sets})$
and the category of noetherian schemes over $S$ by
$(\mathrm{Sch}/S)$.
Define a functor
$
 \overline{\mathcal M}^{\alpha',\beta,\gamma}_{\tilde{\Lambda}^1_{D/S}}(r,(d_i))
 \colon (\mathrm{Sch}/S) \longrightarrow(\mathrm{Sets})
$
by
\[
\overline{\mathcal M}^{\alpha',\beta,\gamma}_{\tilde{\Lambda}^1_{D/S}}(r,(d_i))
(T)
=
\left\{
(E_1,E_2,\Phi,F_*(E_1))\,\middle|\,
\begin{array}{l}
\text{\rm 
$(E_1,E_2,\Phi,F_*(E_1))$ satisfies}
\\
\text{\rm the following (i),(ii),(iii)}
\end{array}
\right\}\Big/\!\sim,
\]
for $T\in(\mathrm{Sch}/S)$, where
\begin{itemize}
\item[\textup{(i)}]
$(E_1,E_2,\Phi,F_*(E_1))$ is a flat family of
parabolic $\tilde{\Lambda}^1_{D_T/T}$-tuples on $X_T$ over $T$,
\item[\textup{(ii)}]
for any $t\in T$,
$\rank E_1|_{X_t}=\rank E_2|_{X_t}=r$,
$\deg E_1|_{X_t}=\deg E_2|_{X_t}=d_1$ and
$\deg F_i(E_1)=d_i$ for $1\leq i\leq l$,
\item[\textup{(iii)}]
$(E_1,E_2,\Phi,F_*(E_1))|_{X_t}$ is
$(\alpha',\beta,\gamma)$-stable
for any geometric point $t$ of $T$.
\end{itemize}
Here $(E_1,E_2,\Phi,F_*(E_1))\sim(E'_1,E'_2,\Phi,F_*(E'_1))$
if there are a line bundle ${\mathcal L}$ on $T$ and
isomorphisms
$\theta_1\colon E_1\xrightarrow{\sim}E'_1\otimes{\mathcal L}$,
$\theta_2\colon E_2\xrightarrow{\sim}E'_2\otimes{\mathcal L}$
such that
\[
\begin{CD}
\tilde{\Lambda}^1_{D_T/T}\otimes E_1 @>\theta_1>\sim > 
\tilde{\Lambda}^1_{D_T/T}\otimes E'_1\otimes{\mathcal L}
\\
@V \Phi VV @VV \Phi' \otimes \mathrm{id}_{\mathcal{L}} V
\\
E_2 @>\theta_2>\sim>  E'_2\otimes{\mathcal L}
\end{CD}
\]
is commutative and
$\theta_1(F_i(E_1))=F_i(E'_1)\otimes{\mathcal L}$
for $1\leq i\leq l$.
\end{Def}

\begin{Thm}[{\cite[Theorem 5.1]{Inaba-Iwasaki-Saito1}}]
 \label {Thm: moduli of Lambda-tuples}
There exists a relative coarse moduli scheme 
\[
\overline{M}^{\alpha',\beta,\gamma}_{\tilde{\Lambda}^1_{D/S}}(r,(d_i))
\longrightarrow S
\]
of  $\overline{\mathcal M}^{\alpha',\beta,\gamma}_{\tilde{\Lambda}^1_{D/S}}(r,(d_i))$.
In fact, 
$\overline{M}^{\alpha',\beta,\gamma}_{\tilde{\Lambda}^1_{D/S}}(r,(d_i))$
represents the \'etale sheafification of the moduli functor
$\overline{\mathcal M}^{\alpha',\beta,\gamma}_{\tilde{\Lambda}^1_{D/S}}(r,(d_i))$.
If $\alpha'$ is generic so that every
$(\alpha',\beta,\gamma)$-semistable parabolic tuple is
$(\alpha',\beta,\gamma)$-stable,
then $\overline{M}^{\alpha',\beta,\gamma}_{\tilde{\Lambda}^1_{D/S}}(r,(d_i))$
is projective over $S$.
\end{Thm}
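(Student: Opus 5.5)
This is the result of \cite[Theorem 5.1]{Inaba-Iwasaki-Saito1}, and I would reproduce its proof by the standard GIT construction for decorated sheaves, in the style of Simpson and Maruyama. The $\mathcal{O}_{X}$-bimodule structure of $\tilde{\Lambda}^1_{D/S}$, which depends on $t$, plays no role in the construction. A parabolic $\tilde{\Lambda}^1_{D/S}$-tuple only involves a homomorphism of \emph{left} $\mathcal{O}_X$-modules $\Phi\colon\tilde{\Lambda}^1_{D/S}\otimes E_1\to E_2$, and $\tilde{\Lambda}^1_{D/S}$ is a locally free left module. So the relevant data are coherent sheaves, a linear map and a flag, all parametrized by a scheme over $S$.

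\textbf{Step 1: boundedness.} I would show that the family of $(\alpha',\beta,\gamma)$-semistable parabolic tuples with fixed invariants $r,(d_i)$ on the geometric fibers is bounded. Comparing with the sub-tuples $(E_1',0,0,\ast)$ and $(E_1,E_2',\Phi,\ast)$, and with the sub-tuple of the form $(0,E_2',0,\ast)$, bounds the slopes of subsheaves of $E_1$ and $E_2$ from above. The large constant $\gamma$ enters only through a fixed twist of $\deg E_2$. Grothendieck's lemma then gives boundedness, so there is an $N$ such that every $E_j(N)$ is globally generated with $H^1=0$.

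\textbf{Step 2: parameter scheme and GIT quotient.} Set $V_j:=\mathcal{O}_S^{\oplus h_j}$ with $h_j=\chi(E_j(N))$. I would build a parameter scheme $R$ over $S$ as a locally closed subscheme of a product of relative Quot schemes $\mathrm{Quot}(V_1\otimes\mathcal{O}(-N))\times\mathrm{Quot}(V_2\otimes\mathcal{O}(-N))$. Over this product sit the relative linear space $\mathbb{V}(\mathcal{H}om(\tilde{\Lambda}^1\otimes E_1,E_2))$ parametrizing $\Phi$ and the flag Quot schemes $E_1\to E_1/F_i(E_1)$ parametrizing the filtration. On $R$ the group $G=(GL(h_1)\times GL(h_2))/\mathbb{G}_m$ acts, and I would embed $R$ $G$-equivariantly into a product of Grassmannians and projective spaces via $H^0$ of high twists. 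The linearization is chosen with weights determined by $\beta_1,\beta_2$ and $\alpha'$. The central step is the Hilbert--Mumford computation identifying GIT (semi)stable points with $(\alpha',\beta,\gamma)$-(semi)stable tuples. One-parameter subgroups correspond to filtrations of $V_1,V_2$, hence to sub-tuples, and the numerical function becomes the difference of $\mathrm{pardeg}$-slopes.

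\textbf{Step 3: the quotient and its properties.} The quotient $\overline{M}:=R^{ss}/\!\!/G$ exists and is quasi-projective over $S$, and it is a relative coarse moduli scheme. On the stable locus, stabilizers are the scalars, so $R^{s}\to R^{s}/G$ is a principal $G$-bundle in the étale topology. Hence the quotient represents the étale sheafification of the moduli functor. For projectivity when $\alpha'$ is generic, I would verify the valuative criterion by a Langton-type argument: a stable tuple over the generic point of a DVR extends to a semistable one over the closed point by repeated elementary modifications. Genericity makes semistable equal to stable, so $R^{ss}/\!\!/G=R^s/G$ is projective over $S$.

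\textbf{Main obstacle.} I expect the hardest step to be the Hilbert--Mumford comparison in Step 2. There, sub-tuples must be matched precisely with the weight filtrations, and one has to check that for $N$ sufficiently large (uniformly, using Step 1 and the largeness of $\gamma$) the GIT inequality and the $\mathrm{pardeg}$ inequality agree. This includes the degenerate sub-tuples with $E_1'=0$ or $E_2'=0$. I would first handle it via the Le Potier/Simpson inequalities relating $h^0(E'(N))$ to $\deg E'$ for subsheaves of a bounded family.
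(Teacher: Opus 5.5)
Your plan is essentially the paper's: the paper proves this by running the GIT construction of \cite[Theorem 5.1 and Proposition 5.6]{Inaba-Iwasaki-Saito1} verbatim with $\Lambda^1_D$ replaced by $\tilde{\Lambda}^1_{D/S}$, and that is what you sketch. Two small corrections are needed: first, $(E_1',0,0,\ast)$ and $(E_1,E_2',\Phi,\ast)$ are not sub-tuples in general, because a sub-tuple requires $\Phi(\tilde{\Lambda}^1_{D/S}\otimes E_1')\subset E_2'$, so you should bound subsheaves $E_1'\subset E_1$ with the always admissible sub-tuple $(E_1',E_2,\Phi|_{\tilde{\Lambda}^1_{D/S}\otimes E_1'},\ast)$; second, the right $\mathcal{O}_X$-structure of $\tilde{\Lambda}^1_{D/S}$ is not irrelevant, since $\tilde{\Lambda}^1_{D/S}\otimes E_1$ is formed with it (this is how $\Phi$ encodes the $t$-twisted Leibniz rule), although the construction only uses that $\tilde{\Lambda}^1_{D/S}$ is locally free on both sides, so that $\tilde{\Lambda}^1_{D/S}\otimes(-)$ is exact and preserves flatness in families.
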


\begin{proof}
The proof is exactly the same as that of 
\cite[Theorem 5.1 and Proposition 5.6]{Inaba-Iwasaki-Saito1}
by  replacing $\Lambda^1_D$ with
$\tilde{\Lambda}^1_{D/S}$.
\end{proof}

\noindent
{\it Proof of Theorem \ref{Thm: existence of moduli space}.}
We use the same argument as \cite[Section 2.4]{Inaba-Iwasaki-Saito1}.
Let $\tilde{\alpha}_1,\ldots,\tilde{\alpha}_l$ be the reordering of
$\{\alpha^{x,i}_k\}$ such that
$0<\tilde{\alpha}_1<\cdots<\tilde{\alpha}_l<1$.
Choose positive integers $\beta_1,\beta_2$ 
so that $\alpha'_i:=\frac{\beta_1+\beta_2}{\beta_1}\tilde{\alpha}_i$
satisfies $0<\alpha'_i<1$ for any $i$.
Moreover we take a sufficiently large rational number $\gamma$.
For  a $\lambda$-connection with a parabolic
$(\nu,\boldsymbol{0})$-framing structure
$(E,\nabla,\{l^x_i,N^{x,i}_j,\overline{l}^{x,i}_k\})$
on a geometric fiber $X_s$,
let  $\{\tilde{l}^{x,i}_k\}$ be the filtration on $E|_{m_xx}$ 
defined by \eqref{definition of extended filtration}.
Let $\Phi\colon \Lambda^1_{D_s,\lambda}\otimes E\rightarrow E$
be the homomorphism of left ${\mathcal O}_{X_s}$-modules
determined by $(\lambda,\nabla)$.
Moreover, set $F_1(E):=E$ and 
$F_j(E):=\ker (F_{j-1}(E)\hookrightarrow E\rightarrow E|_{m_xx}/\tilde{l}^{x,i}_k)$
where $j$ is determined by
$\tilde{\alpha}_j=\alpha^{x,i}_k$.
Put $d_j:=\deg F_j(E)$.
Then $(E,\nabla,\{l^x_i,N^{x,i}_j,\overline{l}^{x,i}_k\})$
is $\alpha$-stable
if and only if the associated parabolic $\Lambda^1_{D_s,\lambda}$-tuple
$(E,E,\Phi,F_*(E))$ is $(\alpha',\beta,\gamma)$-stable.
The correspondence
$(E,\nabla,\{l^x_i,N^{x,i}_j,\overline{l}^{x,i}_k\})
\mapsto (E,E,\Phi,F_*(E))$
can be extended to a morphism of functors
\begin{equation} \label {morphism to Lambda tuple functor}
 {\mathcal M}^{\alpha}_{\mathrm{Hod}}
 \;\longrightarrow\;
 \overline{\mathcal M}^{\alpha',\beta,\gamma}
 _{\tilde{\Lambda}^1_{D/\Lambda}}(r,(d_i)).
\end{equation}
Recall that the moduli space
$\overline{M}^{\alpha',\beta,\gamma}_{\tilde{\Lambda}^1_{D/\Lambda}}(r,(d_i))$
in Theorem \ref{Thm: moduli of Lambda-tuples}
represents the \'etale sheafification of
$\overline{\mathcal M}^{\alpha',\beta,\gamma}_{\tilde{\Lambda}^1_{D/\Lambda}}
(r,(d_i))$.
For $(E_1,E_2,\Phi,F_*(E_1))\in
\overline{\mathcal M}^{\alpha',\beta,\gamma}
_{\tilde{\Lambda}^1_{D/\Lambda}}(r,(d_i))(T)$
and for $x,i$, take $p$ satisfying
$\alpha^{x,i}_0=\tilde{\alpha}_{p}$
and set
$l^x_i:=\ker(E_1|_{m_xx\times T}
\rightarrow E_1/F_p(E_1))$.
Let
\begin{equation*} \label {flag bundle functor}
 {\mathcal Flag}
 \longrightarrow 
 \overline{\mathcal M}^{\alpha',\beta,\gamma}
 _{\tilde{\Lambda}^1_{D/\Lambda}}(r,(d_i))
\end{equation*}
be the flag bundle functor
whose fiber over
$(E_1,E_2,\Phi,F_*(E_1))\in
\overline{\mathcal M}^{\alpha',\beta,\gamma}
_{\tilde{\Lambda}^1_{D/\Lambda}}(r,(d_i))(T)$
is the set of filtrations
\[
\Hom({\mathcal A}^x_i\otimes{\mathcal O}_T,l^x_i/l^x_{i+1})
\supset N^{x,i}_0\supset\cdots\supset N^{x,i}_{r^x_i m_x-1}
\qquad
(x\in D,\: 1\leq i\leq s_x)
\]
by ${\mathcal A}^x_i\otimes{\mathcal O}_T$-submodules.
Then a flag bundle
$\mathbf{Flag}\rightarrow
\overline{M}^{\alpha',\beta,\gamma}_{\tilde{\Lambda}^1_{D/\Lambda}}(r,(d_i))$
represents the \'etale sheafification of the morphism of functors
${\mathcal Flag}\rightarrow
\overline{\mathcal M}^{\alpha',\beta,\gamma}
_{\tilde{\Lambda}^1_{D/\Lambda}}(r,(d_i))$.
Note that $\mathbf{Flag}$ is a projective scheme over $\Lambda$,
since it is projective over
$\overline{M}^{\alpha',\beta,\gamma}
_{\tilde{\Lambda}^1_{D/\Lambda}}(r,(d_i))$
and
$\overline{M}^{\alpha',\beta,\gamma}_{\tilde{\Lambda}^1_{D/\Lambda}}(r,(d_i))$
is a projective scheme over $\Lambda$
by Theorem \ref{Thm: moduli of Lambda-tuples}.
The morphism \eqref{morphism to Lambda tuple functor}
is canonically extended to
a morphism of functors
\begin{equation}
\label {embedding to compactrification}
 {\mathcal M}^{\alpha}_{\mathrm{Hod}} \;\longrightarrow \;
 {\mathcal Flag}
\end{equation}
which is a locally closed immersion of functors.
So there is a locally closed subscheme
$M^{\alpha}_{\mathrm{Hod}}\subset\mathbf{Flag}$,
which represents the \'etale sheafification of \eqref {embedding to compactrification}.
So $M^{\alpha}_{\mathrm{Hod}}$ is the desired moduli space.
\hfill $\square$

\section{Deformation theory of $\lambda$-connections}
\label {sect: Defor lambda conn}

In this section, we will prove Theorem \ref{Thm: main theorem}.
Let $M^{\alpha}_{\mathrm{Hod}}(\lambda,\nu)$ be the fiber
of $M^{\alpha}_{\mathrm{Hod}}$ over a point $(\lambda,\nu)$ of $\Lambda$.
To describe infinitesimal deformations, 
it is technically more convenient to work 
with parabolic frames rather than parabolic framing structures; 
we thus adopt parabolic frames throughout this section.
Accordingly, we take an element 
$(E,\nabla,\{l^x_i,L^x_i\}) \in M^{\alpha}_{\mathrm{Hod}}(\lambda,\nu)$.

Now we consider the factorization of a linear endomorphism
associated with a $(\nu^{x,i}(w),{\mathcal A}^x_i)$-frame $ L^x_i$ for 
$(l^x_i/l^x_{i+1} , \overline{\nabla}^x_i)$.
Take a generator $\sigma^{x,i}$ of $L^x_i$.
So $\sigma^{x,i}\colon{\mathcal A}^x_i\xrightarrow{\sim} l^x_i/l^x_{i+1}$
is an isomorphism.
Consider the composition
\[
 w^x_i:=\sigma^{x,i}\circ ( w\cdot\mathrm{id}_{{\mathcal A}^x_i})
 \circ (\sigma^{x,i})^{-1}
 \colon l^x_i/l^x_{i+1} \longrightarrow l^x_i/l^x_{i+1}. 
\]
The transpose
\[
{}^tw^x_i\colon  (l^x_i/l^x_{i+1} )^{\vee}\longrightarrow (l^x_i/l^x_{i+1})^{\vee}
\]
induces an ${\mathcal O}_{m_xx}[T]$-module structure
on $(l^x_i/l^x_{i+1} )^{\vee}$ by
\[
{\mathcal O}_{m_xx}[T]\times (l^x_i/l^x_{i+1})^{\vee}
\,\ni\, (P(T),v) \; \mapsto \; P(\,{}^tw^x_i)v \,\in\, (l^x_i/l^x_{i+1})^{\vee},
\]
making $(l^x_i/l^x_{i+1})^{\vee}$ isomorphic to ${\mathcal A}^x_i$ 
as an ${\mathcal O}_{m_xx}[T]$-module.
So there is an isomorphism
\[
 \check{\sigma}^{x,i}\colon {\mathcal A}^x_i \: \xrightarrow{\;\sim\;} \:
 (l^x_i/l^x_{i+1})^{\vee}
\]
of ${\mathcal O}_{m_xx}[T]$-modules.
Consider the composition
\[
 \theta^x_i:= \sigma^{x,i}\circ(\check{\sigma}^{x,i})^{-1}
 \colon \: (l^x_i/l^x_{i+1})^{\vee}
 \xrightarrow[\sim]{(\check{\sigma}^{x,i})^{-1}} {\mathcal A}^x_i
 \xrightarrow[\sim]{\;\sigma^{x,i}\:} l^x_i/l^x_{i+1}.
\]
Then $w^x_i$ factors through a homomorphism
$\kappa^x_i\colon l^x_i/l^x_{i+1}\rightarrow (l^x_i/l^x_{i+1})^{\vee}$:
\[
 w^x_i
 =
 \theta^x_i\circ\kappa^x_i\colon\:
  l^x_i/l^x_{i+1} \xrightarrow{\;\kappa^x_i\:}
  (l^x_i/l^x_{i+1})^{\vee}
 \xrightarrow[\sim]{\;\theta^x_i\:}
 l^x_i/l^x_{i+1}.
\]
We can see $^t\theta^x_i=\theta^x_i$
and $^t\kappa^x_i=\kappa^x_i$ 
by the same argument as in \cite[Proposition 1.1]{Inaba2}.

A similar argument as in \cite[Proposition 2.14]{Inaba2} yields the following proposition.

\begin{Prop}
There is a bijection between the set of
$(\nu^{x,i}(w),{\mathcal A}^x_i)$-frames
$L^x_i$
on $(l^x_i/l^x_{i+1},\overline{\nabla}^x_i)$
and the set of pairs $(\theta^x_i,\kappa^x_i)$
of an isomorphism 
$\theta^x_i\colon (l^x_i/l^x_{i+1})^{\vee}\xrightarrow{\sim} l^x_i/l^x_{i+1}$
and a homomorphism
$\kappa^x_i\colon  l^x_i/l^x_{i+1}\rightarrow ( l^x_i/l^x_{i+1})^{\vee}$
which satisfy
$^t\theta^x_i=\theta^x_i$,
$^t\kappa^x_i=\kappa^x_i$,
$(\theta^x_i\circ\kappa^x_i)^{r^x_i}=z\cdot\mathrm{id}_{l^x_i/l^x_{i+1}}$
and $\overline{\nabla}^x_i=\nu^{x,i}(\theta^x_i\circ\kappa^x_i)$.
\end{Prop}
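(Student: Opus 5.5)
We construct the two maps explicitly and check that they are mutually inverse. Throughout we write $V:=l^x_i/l^x_{i+1}$, $\mathcal{A}:=\mathcal{A}^x_i$ and $r':=r^x_i$.

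\emph{From a frame to a pair.} Given a $(\nu^{x,i}(w),\mathcal{A})$-frame $L$ with generator $\sigma$, we form $w_V=\sigma\circ w\circ\sigma^{-1}$, $\check\sigma$, $\theta=\sigma\circ\check\sigma^{-1}$ and $\kappa=\theta^{-1}\circ w_V$, as in the construction preceding the statement.
\begin{itemize}
\item The relation $(\theta\kappa)^{r'}=w_V^{r'}=z$ is immediate.
\item Definition~\ref{def: (nu,A)-frame}(iii) gives $\overline{\nabla}=\sigma\circ\nu(w)\circ\sigma^{-1}=\nu(w_V)=\nu(\theta\kappa)$, where the polynomial $\nu$ is evaluated in the commuting $\mathcal{O}_{m_xx}$-linear operator $w_V$.
\item Symmetry: $\mathcal{A}=\mathcal{O}_{m_xx}[w]/(w^{r'}-z)$ is a Frobenius (Gorenstein) $\mathcal{O}_{m_xx}$-algebra. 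Fixing the trace form $\mathrm{res}\colon\mathcal{A}\to\mathcal{O}_{m_xx}$ (coefficient of $w^{r'-1}$), we may normalise $\check\sigma$ to be the isomorphism $\mathcal{A}\to V^\vee$, $a\mapsto \mathrm{res}(a\cdot\sigma^{-1}(-))$. This is $\mathcal{O}[T]$-linear because multiplication by $w$ is self-adjoint for the pairing $(a,b)\mapsto\mathrm{res}(ab)$. Then $\theta$ corresponds to the symmetric bilinear form $(a,b)\mapsto\mathrm{res}(ab)$ transported by $\sigma$, hence ${}^t\theta=\theta$. The form $\kappa$ corresponds to $(a,b)\mapsto\mathrm{res}(wab)$, which is also symmetric. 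This is the argument of \cite[Proposition 1.1]{Inaba2}.
\item Independence of the generator: another generator is $\sigma'=\sigma\alpha$ with $\alpha\in\mathcal{A}^\times$. Then $w_V$ is unchanged, while $\theta$ and $\kappa$ change by the same unit. The pair is therefore canonical only after the normalisation of $\check\sigma$ above. Since $\check\sigma$ is defined up to $\mathcal{A}^\times$, we must fix the normalisation compatibly, or equivalently read the bijection as one with pairs modulo the obvious scaling, as in \cite[Proposition 2.14]{Inaba2}. I will follow that reference's convention.

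\end{itemize}

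\emph{From a pair to a frame.} Given $(\theta,\kappa)$ with the stated properties, put $W:=\theta\kappa$. Since $W^{r'}=z$, $V$ becomes an $\mathcal{A}$-module. We must show that it is free of rank one; this is the main step. Reducing modulo $z$, $W|_x$ is nilpotent with $W|_x^{r'}=0$ on an $r'$-dimensional space. The condition $\overline{\nabla}=\nu(W)$ together with $a_1(0)\neq 0$ does not by itself force a single Jordan block. So I would instead use that $\kappa=\theta^{-1}W$ is symmetric and that $\det W=\pm z$ (obtained by comparing $\det(W)^{r'}=z^{r'}$ over $\mathbb{C}[z]/(z^{m})$ for $m\ge2$). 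From $\det W=\pm z$, $\dim\ker(W|_x)=1$, so $W|_x$ is a single nilpotent Jordan block. Nakayama's lemma then gives a cyclic vector $v$, and $\sigma\colon a\mapsto a\cdot v$ is an isomorphism $\mathcal{A}\xrightarrow{\sim}V$ by a length count. If the determinant argument is delicate (the $r'$-th root ambiguity), I would fall back on the fact that $(\theta\kappa)^{r'}=z$ with $\mathcal{O}_{m_xx}$ of length $m\ge2$ already forces the characteristic polynomial to be $T^{r'}-z$. Indeed, the Cayley--Hamilton polynomial has constant term divisible by $z$ but not by $z^2$, as follows by comparing $z$-adic valuations.

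Given the frame isomorphism $\sigma$, the relation $\sigma^{-1}\overline{\nabla}\sigma=\nu(w)$ follows from $\overline{\nabla}=\nu(W)$, and $L:=\mathcal{A}\sigma$ is a $(\nu(w),\mathcal{A})$-frame. Finally, since $\check\sigma$ is now recovered from $\theta$, the two constructions are mutually inverse, which completes the proof.
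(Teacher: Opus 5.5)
Your construction of a pair from a frame, including the trace-form proof of ${}^t\theta=\theta$ and ${}^t\kappa=\kappa$, is fine. The gap is in the step you single out as the main one: that $W=\theta\kappa$ makes $V$ a free $\mathcal{A}$-module of rank one. The identity $\det(W)^{r'}=\det(W^{r'})=z^{r'}$ pins down the $z$-adic valuation of $\det W$ only if $z^{r'}\neq 0$ in $\mathcal{O}_{m_xx}=\mathbb{C}[z]/(z^m)$, i.e.\ only if $r'<m$; in that case it gives $\det W\in z\,\mathcal{O}_{m_xx}^{\times}$, which is enough. If $r'\geq m$ (already the basic case $m=r'=2$), the identity reads $\det(W)^{r'}=0$. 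It then yields only $\det W\in(z)$, which says nothing about $\dim\ker(W|_x)$. Your fallback via the constant term of the characteristic polynomial is the same valuation comparison and fails for the same reason.

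The hypothesis $m\geq 2$ should be used through kernels instead. Regard $W$ as a nilpotent $\mathbb{C}$-linear operator on the $r'm$-dimensional space $V$, with Jordan blocks of sizes $s_1,\dots,s_n$. Since $W^{jr'}=z^j$ and $V$ is free over $\mathcal{O}_{m_xx}$, you have $\dim\ker W^{r'}=\dim z^{m-1}V=r'$ and $\dim\ker W^{2r'}=\dim z^{m-2}V=2r'$. That is, $\sum_i\min(s_i,r')=r'$ and $\sum_i\min(s_i,2r')=2r'$. If $n\geq 2$, the first equality forces $s_i<r'$ for all $i$, and then the second sum equals the first, a contradiction. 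Hence $n=1$, $W$ has a cyclic vector $v$, and $a\mapsto a(W)v$ is an isomorphism $\mathcal{A}\xrightarrow{\sim}V$ intertwining $w$ and $W$. For $m=1$ the paper assumes $r'=1$, so there is nothing to prove.

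The bookkeeping of the bijection also needs fixing. By the argument in the proof of Proposition~\ref{Prop: rephrase to frame}, an $\mathcal{O}_{m_xx}$-linear automorphism of $\mathcal{A}$ commuting with $a(w)$ is multiplication by a unit, because $a_1(0)\neq 0$. So a $(\nu(w),\mathcal{A})$-frame on $(V,\overline{\nabla})$ is unique if it exists. The admissible pairs, by contrast, form (when nonempty) a free orbit of $\mathcal{A}^{\times}$ acting by $(\theta,\kappa)\mapsto(\theta\circ\gamma({}^tW),\,\gamma({}^tW)^{-1}\circ\kappa)$. This is the action whose infinitesimal version is the summand $A^0$ of $\mathcal{F}^0$.

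So the statement must be read with pairs taken modulo this action, which is your second option. Your first option does not give a canonical pair: even with $\check\sigma$ normalised by the trace form, replacing $\sigma$ by $\sigma\circ\alpha$ replaces $\theta$ by $\alpha(w_V)^2\circ\theta$.

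With the quotient reading, the claim that the two constructions are mutually inverse rests on two observations you do not make. First, for $\kappa=\theta^{-1}W$, the condition ${}^t\kappa=\kappa$ is equivalent to $\theta\circ{}^tW=W\circ\theta$, so $\theta$ is $\mathcal{A}$-linear. Second, $W$ is determined by $\overline{\nabla}$. Indeed, $a(w)-a_0(z)$ is $w$ times a unit of $\mathcal{A}$, so $w$ is a polynomial in it. Accordingly $W$ is the same polynomial in the endomorphism $R$ of $V$ defined by $R\otimes\frac{\mathrm{d}z}{z^m}=\overline{\nabla}-\nu_0(z)\,\mathrm{id}_V$.

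Hence any two admissible pairs have the same $W$. Then $\theta_2\circ\theta_1^{-1}$ is an $\mathcal{A}$-linear automorphism of $V$, i.e.\ multiplication by $\gamma(W)$ for some $\gamma\in\mathcal{A}^{\times}$. This is where the symmetry of $\kappa$ is actually used, not in the freeness step.
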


Set
\begin{align*}
\mathrm{Sym}^2\left(l^x_i/l^x_{i+1}\right)
&:=
\left\{
\zeta\in \Hom \big( (l^x_i/l^x_{i+1})^{\vee},l^x_i/l^x_{i+1}\big)
\,\middle|\,
^t\zeta=\zeta
\right\},
\\
\mathrm{Sym}^2\left((l^x_i/l^x_{i+1})^{\vee}\right)
&:=
\left\{
\xi\in \Hom \big( l^x_i/l^x_{i+1},(l^x_i/l^x_{i+1})^{\vee}\big)
\,\middle|\,
^t\xi=\xi
\right\}.
\end{align*}
Define sheaves ${\mathcal G}^0$, ${\mathcal G}^1$,
$\overline{S\,}^2(E|_D)$, $\overline{S\,}^2(E|_D^{\vee})$,
$A^0$, $A^2$ and $\overline{\mathcal G\,}^1$  on $X$ by
\begin{align*}
&{\mathcal G}^0
:=
\left\{ u\in{\mathcal End}(E)\,\middle|\,
\begin{array}{l}
\text{$u|_{m_xx}(l^x_i)\subset l^x_i$ 
for $x\in D$ and $1\leq i\leq s_x$}
\end{array}
\right\},
\\
&{\mathcal G}^1
:=
\left\{ v\in {\mathcal End}(E)\otimes\Omega^1_X(D)
\,\middle|\,
\begin{array}{l}
\text{$v|_{m_xx}(l^x_i)\subset l^x_i\otimes\Omega^1_X(D)|_{m_xx}$}
\\
\text{for $x\in D$ and $1\leq i\leq s_x$}
\end{array}
\right\},
\\
&\overline{S\,}^2(E|_D)
:=
\bigoplus_{x\in D} \bigoplus_{i=1}^{s_x}\mathrm{Sym}^2\left(l^x_i/l^x_{i+1}\right),
\quad
\overline{S\,}^2(E|_D^{\vee})
:=
\bigoplus_{x\in D} \bigoplus_{i=1}^{s_x}
\mathrm{Sym}^2\left((l^x_i/l^x_{i+1})^{\vee}\right),
\\
&A^0:=\bigoplus_{x\in D}\bigoplus_{i=1}^{s_x} 
{\mathcal A}^x_i,
\qquad
A^2:=
\Hom(A^0,\Omega^1_X(D)|_D), \ \ \text{and}
\\
&\overline{\mathcal G\,}^1:=
 \bigoplus_{x\in D}\bigoplus_{i=1}^{s_x}
 \End(l^x_i/l^x_{i+1})\otimes\Omega^1_X(D)|_{m_xx},
\end{align*}
respectively.  
Then there is a surjection
\[
 {\mathcal G}^1\ni  v  \longmapsto 
 \big(\,(v|_{m_xx})^x_i\,\big) \in \overline{\mathcal G\,}^1,
\]
where 
$(v|_{m_xx})^x_i\in \End(l^x_i/l^x_{i+1})\otimes\Omega^1_X(D)|_{m_xx}$
is the homomorphism induced by
$v|_{m_xx}\in \End(E|_{m_xx})\otimes\Omega^1_X(D)|_{m_xx}$.
Define homomorphisms
\[
\tau_{1}\colon 
{\mathcal G}^0\oplus A^0 \longrightarrow \overline{S\,}^2(E|_D),
\qquad
\tau_{2}\colon 
{\mathcal G}^0\oplus A^0 \longrightarrow \overline{S\,}^2(E|_D^{\vee})
\]
by setting
\begin{align*}
\tau_{1}(u,(P^x_i(w))) 
&:=
\big( \, -(u|_{m_xx})^x_i\circ\theta^x_i
-\theta^x_i\circ\!\,^t(u|_{m_xx})^x_i+\theta^x_i\circ P^x_i(\!\,^tw^x_i) \, \big),
\\
\tau_{2}(u,(P^x_i(w))) 
&:=
\big( \, \kappa^x_i\circ (u|_{m_xx})^x_i+\!\,^t(u|_{m_xx})^x_i\circ\kappa^x_i
-P^x_i(\!\,^tw^x_i)\circ\kappa^x_i \, \big)
\end{align*}
for $(u,(P^x_i(w)))\in {\mathcal G}^0\oplus A^0$.
Define a homomorphism
\[
\eta\colon \overline{S\,}^2(E|_D)\oplus \overline{S\,}^2(E|_D^{\vee})
\longrightarrow \overline{\mathcal G\,}^1
\]
by setting
\begin{align*}
\eta((\zeta^x_i),(\xi^x_i))
&:=
\bigg(\,
\sum_{k=1}^{r^x_i-1}\sum_{l=1}^k \nu^{x,i}_k(z)\: 
(w^x_i)^{k-l}\circ(\theta^x_i\circ\xi^x_i+\zeta^x_i\circ\kappa^x_i)\circ(w^x_i)^{l-1}
\bigg)
\end{align*}
for $((\zeta^x_i),(\xi^x_i))
\in \overline{S\,}^2(E|_D)\oplus \overline{S\,}^2(E|_D^{\vee})$.
Let
\[
\Theta\colon \overline{S\,}^2(E|_D)\oplus \overline{S\,}^2(E|_D^{\vee})
\longrightarrow A^2
\]
be the homomorphism defined by
\[
\Theta((\zeta^x_i),(\xi^x_i)) \colon
A^0 \, \ni \, (P^x_i(w)) \; \mapsto \;
\big( \Tr(P^x_i(w^x_i)\circ (\theta^x_i\circ\xi^x_i+\zeta^x_i\circ\kappa^x_i))\big)
\frac{\mathrm{d}z}{z^{m_x}}
\,\in\, \Omega^1_X(D)|_D
\]
for $((\zeta^x_i),(\xi^x_i))\in 
\overline{S\,}^2(E|_D)\oplus \overline{S\,}^2(E|_D^{\vee})$.

Define a complex 
\begin{equation} \label {complex defining tangent space}
{\mathcal F}^{\bullet}\colon \;
{\mathcal F}^0 \xrightarrow{\;d^0_{\mathcal F^{\bullet}}\:}
{\mathcal F}^1 \xrightarrow{\;d^1_{\mathcal F^{\bullet}}\:}
{\mathcal F}^2
\end{equation}
of sheaves on $X$ by setting
\[
{\mathcal F}^0:=
{\mathcal G}^0\oplus A^0,
\qquad
{\mathcal F}^1:=
{\mathcal G}^1\oplus \overline{S\,}^2(E|_D)\oplus \overline{S\,}^2(E|_D^{\vee}),
\qquad
{\mathcal F}^2:=
\overline{\mathcal G\,}^1 \oplus A^2,
\]
\[
d^0_{\mathcal F^{\bullet}}(u,(P^x_i(w)))
=
\big(\nabla u-u \nabla , \, \tau_1(u,(P^x_i(w))),
\, \tau_2(u,(P^x_i(w)))\big)
\]
for $(u,(P^x_i(w))) \in {\mathcal G}^0\oplus A^0$ and
\[
d^1_{\mathcal F^{\bullet}}(v,(\zeta^x_i),(\xi^x_i))
=
\big( \big(\,(v|_{m_xx})^x_i\,\big)-\eta \big((\zeta^x_i),(\xi^x_i)\big),\,
\Theta((\zeta^x_i),(\xi^x_i)) \big)
\]
for $(v,(\zeta^x_i),(\xi^x_i))\in
{\mathcal G}^1\oplus \overline{S\,}^2(E|_D)\oplus\overline{S\,}^2(E|_D^{\vee})$.

\subsection{Relative tangent spaces of $M^{\alpha}_{\mathrm{Hod}}$
over $\Lambda$} \label{subsect relative tangent sp}

\begin{Prop} \label {Prop: tangent space}
Let $M^{\alpha}_{\mathrm{Hod}}(\lambda,\nu)$
be the fiber of
$M^{\alpha}_{\mathrm{Hod}}
\rightarrow \Lambda$
over $(\lambda,\nu)\in\Lambda$.
Then the  tangent space of $M^{\alpha}_{\mathrm{Hod}}(\lambda,\nu)$
at $\big(E,\nabla,\{l^x_i,L^x_i\}\big)$
is isomorphic to
$\mathbb{H}^1({\mathcal F}^{\bullet})$.
\end{Prop}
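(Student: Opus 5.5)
Following the strategy of \cite[Proposition 2.?]{Inaba2} and \cite[Section 5]{Inaba-Iwasaki-Saito1}, I would compute the tangent space as the set of first-order deformations: flat families $(E_\epsilon,\nabla_\epsilon,\{l^x_{i,\epsilon},L^x_{i,\epsilon}\})$ over $\Spec\mathbb{C}[\epsilon]/(\epsilon^2)$ with fixed $(\lambda,\nu)$, restricting to the given object modulo $\epsilon$. Since $\lambda$ is fixed, $\nabla_\epsilon$ is a $\lambda$-connection, so locally differences of lifts of $\nabla$ are $\mathcal{O}$-linear and lie in ${\mathcal End}(E)\otimes\Omega^1_X(D)$.

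\textbf{Step 1: Čech description of a deformation.} Take an affine open cover $\{U_\alpha\}$ of $X$ such that each $x\in D$ lies in exactly one $U_\alpha$. On each $U_\alpha$ trivialize $E_\epsilon|_{U_\alpha}\cong E|_{U_\alpha}\otimes\mathbb{C}[\epsilon]$ compatibly with the filtrations $l^x_{i,\epsilon}$; this is possible since the filtrations have locally free graded pieces over $\mathcal{O}_{m_xx}$. Then $\nabla_\epsilon|_{U_\alpha}=\nabla+\epsilon v_\alpha$ with $v_\alpha\in{\mathcal G}^1(U_\alpha)$, and the transition data give $u_{\alpha\beta}\in{\mathcal G}^0(U_{\alpha\beta})$ with $v_\beta-v_\alpha=\nabla u_{\alpha\beta}-u_{\alpha\beta}\nabla$. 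For the frame I would use the bijection of the last Proposition: the frame $L^x_{i,\epsilon}$ corresponds to a pair $(\theta^x_{i,\epsilon},\kappa^x_{i,\epsilon})=(\theta^x_i+\epsilon\zeta^x_i,\kappa^x_i+\epsilon\xi^x_i)$ of symmetric maps, so $(\zeta^x_i,\xi^x_i)\in\mathrm{Sym}^2(l^x_i/l^x_{i+1})\oplus\mathrm{Sym}^2((l^x_i/l^x_{i+1})^\vee)$. The pair is determined by $L$ only up to the choice of generator $\sigma$ and of $\check\sigma$, i.e.\ up to $\mathcal{A}^x_i$-units; infinitesimally this ambiguity is $P^x_i(w)\in\mathcal{A}^x_i$, which acts as $\theta\mapsto\theta\circ P(^tw)$, $\kappa\mapsto -P(^tw)\kappa$ at first order. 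Together with a change of trivialization $u$ near $x$, this is precisely $\tau_1,\tau_2$. So the set of deformations with choices is a Čech $1$-cochain of the cone-type complex, and the gauge changes $(u_\alpha, P)$ act via $d^0_{\mathcal F^\bullet}$.

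\textbf{Step 2: the constraints give $d^1=0$.} Linearizing $(\theta\kappa)^{r}=z$ and $\overline\nabla^x_i=\nu^{x,i}(\theta\kappa)$: the derivative of $w=\theta\kappa$ is $\dot w=\theta\xi+\zeta\kappa$, and the derivative of $\nu(w)=\sum_k\nu_k w^k$ is $\sum_k\sum_{l}\nu_k w^{k-l}\dot w w^{l-1}$, which is $\eta(\zeta,\xi)$. Hence the condition on $\overline\nabla$ becomes $(v|_{m_xx})^x_i=\eta(\zeta,\xi)$. The condition $(\theta\kappa)^r=z$ linearizes to $\sum_l w^{r-l}\dot w w^{l-1}=0$; I would show this is equivalent (using that $\dot w$ commutes suitably with $\mathcal A$ after trace pairing, and $r$ invertible) to the vanishing of $\Tr(P(w)\dot w)$ for all $P$, i.e.\ $\Theta(\zeta,\xi)=0$. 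This identification of the constraint $(\theta\kappa)^r=z$ with $\Theta=0$ is where I expect the main obstacle: one must check that the $\mathcal{A}$-linear (commuting) part of $\dot w$ is exactly what is killed, using that $\mathcal{A}^x_i$ is Gorenstein with trace form nondegenerate (the derivative of $w^r$ in the commuting direction is $rw^{r-1}\dot w$, and $w^{r-1}$ is not a zero divisor issue must be handled via the trace pairing $\Tr(w^{r-1}\cdot)$). Non-commuting parts contribute to $\sum_l w^{r-l}\dot w w^{l-1}$ trivially after conjugation arguments as in \cite{Inaba2}.

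\textbf{Step 3: conclude.} With these identifications, equivalence classes of first-order deformations correspond to Čech hypercocycles $((u_{\alpha\beta}),(v_\alpha,\zeta,\xi))$ modulo coboundaries, i.e.\ $\mathbb{H}^1({\mathcal F}^\bullet)$; the twisting by line bundles in the functor only removes scalars, which are absorbed since scalar $u$ with $P$ constant act trivially. Since $M^{\alpha}_{\mathrm{Hod}}$ represents the étale sheafification, its Zariski tangent space equals this deformation space.
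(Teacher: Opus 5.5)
Your architecture is the paper's: Čech cochains with $u_{\alpha\beta}\in\mathcal G^0$ and $v_\alpha\in\mathcal G^1$, the frame encoded by symmetric lifts $\theta^x_i+\overline{t}\zeta^x_i$ and $\kappa^x_i+\overline{t}\xi^x_i$, the $\mathcal A^x_i$-ambiguity producing the $A^0$-part of $\tau_1,\tau_2$, and the linearization of $\overline\nabla^x_i=\nu^{x,i}(\theta\kappa)$ producing $\eta$. The gap is Step 2, which is the heart of the proof, and the mechanism you propose there fails.

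Write $w=w^x_i$, $r=r^x_i$, $\dot w=\theta^x_i\xi^x_i+\zeta^x_i\kappa^x_i$ and $L(\dot w)=\sum_{l=1}^{r}w^{r-l}\dot w\,w^{l-1}$. The trace form $\Tr_{\mathcal A/\mathcal O}$ is \emph{degenerate} here. In the basis $1,w,\dots,w^{r-1}$ its only nonzero entries are $r$ at $(0,0)$ and $rz$ at $(j,r-j)$, so every $z^{m_x-1}w^j$ with $1\le j\le r-1$ lies in its kernel. These same elements lie both in the centralizer $\mathcal A$ of $w$ and in $[\End(l^x_i/l^x_{i+1}),w]$, so $\dot w$ has no well-defined splitting into a ``commuting'' and a ``non-commuting'' part. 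Pairing with powers of $w$ only gives $\Tr(w^kL(\dot w))=r\Tr(w^{k+r-1}\dot w)$. That yields $\Tr(w^{r-1}\dot w)=0$ and $z\Tr(w^j\dot w)=0$ for $j\le r-2$, which does not give $\Tr(w^j\dot w)=0$, because $z$ is a zero divisor in $\mathcal O_{m_xx}$. Conversely, nothing in your sketch shows that $\Theta=0$ forces $L(\dot w)=0$.

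The missing ingredient is the commutator characterization \cite[Lemma 1.4]{Inaba2}: if $\Tr(w^k\dot w)=0$ for $0\le k\le r-1$, then $\dot w=\psi w-w\psi$ for some endomorphism $\psi$ of $l^x_i/l^x_{i+1}$. With it, telescoping gives $(w+\overline{t}\dot w)^{r}=z+\overline{t}\sum_{l=0}^{r-1}w^l[\psi,w]w^{r-l-1}=z+\overline{t}(\psi z-z\psi)=z$. This is exactly how the paper builds a deformation from a class in $\mathbb H^1(\mathcal F^\bullet)$. The deformed generator is then $\sum_k b_k(z)w^k\mapsto\sum_k b_k(z)(w_v)^k\tilde u$, for a lift $\tilde u$ of $\sigma^{x,i}(1)$.

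For the opposite direction, do not try to deduce $\Theta=0$ from $L(\dot w)=0$. A deformed frame has a generator $\sigma_v$ lifting $\sigma^{x,i}$, so locally $\sigma_v=(1+\overline{t}\psi)\sigma^{x,i}$. Then $w_v=\sigma_v w\sigma_v^{-1}=w+\overline{t}[\psi,w]$, so $\dot w$ is a commutator and every $\Tr(w^k\dot w)$ vanishes automatically. This is the converse the paper takes from \cite[Proposition 2.17]{Inaba2}.

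Your asserted equivalence $L(\dot w)=0\iff\Theta=0$ is in fact true, since the kernel of $L$ equals the image of $\mathrm{ad}_w$. But proving it needs this commutator description or an equivalent argument, not the trace-form splitting you outline.
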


\begin{proof}
Set $L^x_i= {\mathcal A}^x_i \sigma^{x,i}$.
Assume that  an element
$v\in\mathbb{H}^1({\mathcal F}^{\bullet})$
is given.
Take an affine open covering
$\{U_{\alpha}\}$ of $X$
such that
$E|_{U_{\alpha}}\cong{\mathcal O}_{U_{\alpha}}^{\oplus r}$.
We may assume that
each point $x \in D$ is contained in a unique open set $U_\alpha$.
Then $v$ is expressed as
a \v{C}ech cohomology class
\[
 v=\big[
 \{u_{\alpha\beta}\},\{v_{\alpha}\},
 \{(\zeta^{x}_{i,\alpha}),(\xi^{x}_{i,\alpha})\}
 \big],
\]
where
$
\{u_{\alpha\beta}\}\in C^1\big(\{U_{\alpha}\},{\mathcal G}^0\big)
$,
$\{v_{\alpha}\}\in C^0\big(\{U_{\alpha}\},{\mathcal G}^1\big)$,
$(\zeta^{x}_{i,\alpha})\in C^0\big(\{U_{\alpha}\},\overline{S\,}^2(E|_D)\big)$
and
$(\xi^{x}_{i,\alpha})\in C^0\big(\{U_{\alpha}\},\overline{S\,}^2(E|_D^{\vee})\big)$
satisfy
\begin{align*}
&u_{\beta\gamma}-u_{\alpha\gamma}+u_{\alpha\beta}=0,
\qquad
v_{\beta}-v_{\alpha}=\nabla u_{\alpha\beta}-u_{\alpha\beta}\nabla,
\\
&((v_{\alpha}|_{m_xx})^x_i)
=\eta((\zeta^x_{i,\alpha}),(\xi^x_{i,\alpha}))|_{U_{\alpha}},
\quad
\Theta( (\zeta^x_{i,\alpha}), (\xi^x_{i,\alpha}) )=0.
\end{align*}
For a scheme $Y$ over $\mathbb{C}$,
write $Y[\overline{t}]:=Y\times\mathbb{C}[t]/(t^2)$.
Take a free ${\mathcal O}_{U_{\alpha}[\overline{t}]}$-module
$E_{\alpha}$ with an isomorphism
$E_{\alpha}\xrightarrow[\sim]{\phi_{\alpha}} 
E|_{U_{\alpha}}\otimes\mathbb{C}[t]/(t^2)$
and filtrations
$(l_{\alpha})^x_i$ of $E_{\alpha}|_{m_xx[\overline{t}]}$
corresponding to $l^x_i|_{U_{\alpha}}\otimes\mathbb{C}[t]/(t^2)$
through $\phi_{\alpha}$.
Put
\begin{align*}
 &\theta_{\alpha\beta}:=\phi_{\alpha}^{-1} \circ
 (\mathrm{id}+\overline{t}u_{\alpha\beta})\circ\phi_{\beta}
 \colon
 E_{\beta}|_{U_{\alpha\beta}[\overline{t}]}
 \xrightarrow{\;\sim\;}
 E_{\alpha}|_{U_{\alpha\beta}[\overline{t}]} \ \ \text{and}
\\
&\nabla_{\alpha}:=
\phi_{\alpha}^{-1}\circ
 (\nabla+\overline{t}v_{\alpha})\circ\phi_{\alpha}
 \colon
 E_{\alpha} \longrightarrow
 E_{\alpha}\otimes\Omega^1_{U_{\alpha}[\overline{t}]/\Spec\mathbb{C}[t]/(t^2)}
 ((D\cap U_{\alpha})[\overline{t}]).
\end{align*}
Then we have
\begin{align*}
 \theta_{\gamma\alpha}^{-1}\circ\theta_{\gamma\beta}\circ\theta_{\beta\alpha}
 &=
 \phi_{\alpha}^{-1}(\mathrm{id}-\overline{t}u_{\gamma\alpha})\phi_{\gamma} 
 \circ \phi_{\gamma}^{-1}(\mathrm{id}+\overline{t}u_{\gamma\beta})\phi_{\beta}
 \circ
 \phi_{\beta}^{-1}(\mathrm{id}+\overline{t}u_{\beta\alpha})\phi_{\alpha}
 =\mathrm{id}\ \ \text{and}
\\
\theta_{\alpha\beta}\nabla_{\beta}-\nabla_{\alpha}\theta_{\alpha\beta}
&=
\phi_{\alpha}^{-1}(\mathrm{id}+\overline{t}u_{\alpha\beta})\phi_{\beta}
\phi_{\beta}^{-1}(\nabla+\overline{t}v_{\beta})\phi_{\beta}
-\phi_{\alpha}^{-1}(\nabla+\overline{t}v_{\alpha})\phi_{\alpha}
\phi_{\alpha}^{-1}(\mathrm{id}+\overline{t}u_{\alpha\beta})\phi_{\beta}
\\
&=
\phi_{\alpha}^{-1}(u_{\alpha\beta}\nabla+v_{\beta})\overline{t}\phi_{\beta}
-\phi_{\alpha}^{-1}(\nabla u_{\alpha\beta}+v_{\alpha})\overline{t}\phi_{\beta}
=0.
\end{align*}
So the patching data $\{E_{\alpha},\theta_{\alpha\beta},\nabla_{\alpha}\}$
gives rise to a tuple
$
(E_v,\nabla_v,\{(l_v)^x_i\})
$
of a vector bundle $E_v$ on $X[\overline{t}]$, a connection
$\nabla_v\colon E_v\rightarrow E_v\otimes
\Omega^1_{X[\overline{t}]/\Spec\mathbb{C}[t]/(t^2)}(D[\overline{t}])$
and
filtrations $(l_v)^x_i$ on $E_v|_{m_xx[\overline{t}]}$ 
preserved by $\nabla|_{m_xx[\overline{t}]}$. 
Consider the lifts
\begin{align*}
&(\theta_v)^x_i:=\theta^x_i+\overline{t}\zeta^x_i\colon
\big((l_v)^x_i/(l_v)^x_{i+1}\big)^{\vee} \longrightarrow (l_v)^x_i/(l_v)^x_{i+1}
\;\text{and}
\\
&(\kappa_v)^x_i:=\kappa^x_i+\overline{t}\xi^x_i\colon
(l_v)^x_i/(l_v)^x_{i+1} \longrightarrow \big((l_v)^x_i/(l_v)^x_{i+1}\big)^{\vee}
\end{align*}
of 
$\theta^x_i$ and $\kappa^x_i$,
respectively,
where we set
$\xi^x_i:=\xi^x_{i,\alpha}$, $\zeta^x_i:=\zeta^x_{i,\alpha}$
for $U_{\alpha}$ containing $x$.
Then
\[
(w_v)^x_i:=
(\theta_v)^x_i\circ(\kappa_v)^x_i
=
\theta^x_i\circ\kappa^x_i+\overline{t}(\theta^x_i\circ\xi^x_i+\zeta^x_i\circ\kappa^x_i)
\colon (l_v)^x_i/(l_v)^x_{i+1} \longrightarrow (l_v)^x_i/(l_v)^x_{i+1}
\]
is a lift of $w^x_i=\theta^x_i\circ\kappa^x_i$.
Since $\Theta( (\zeta^x_i), (\xi^x_i) )=0$,
we have $\Tr((w^x_i)^k(\theta^x_i\circ\xi^x_i+\zeta^x_i\circ\kappa^x_i))=0$
for $0\leq k\leq r^x_i-1$.
By \cite[Lemma 1.4]{Inaba2}, there is an endomorphism $\psi$ of 
$l^x_i/l^x_{i+1}$
such that
$\theta^x_i\circ\xi^x_i+\zeta^x_i\circ\kappa^x_i
=\psi\circ w^x_i-w^x_i\circ\psi$.
Since $(w^x_i)^{r^x_i}=z$, we have
\begin{align*}
\big((w_v)^x_i\big)^{r^x_i}
&=
(w^x_i)^{r^x_i}+\overline{t}\sum_{l=0}^{r^x_i-1} (w^x_i)^l
(\psi\circ w^x_i-w^x_i\circ\psi)(w^x_i)^{r^x_i-l-1}
\\
&=z
+\overline{t}(\psi\circ(w^x_i)^{r^x_i}-(w^x_i)^{r^x_i}\circ\psi)
=z.
\end{align*}
Since
$((v_{\alpha}|_{m_xx})^x_i)=\eta((\zeta^x_i),(\xi^x_i))|_{U_{\alpha}}$,
we have
\begin{align} \label {local deformation of connection}
\begin{split}
\sum_{k=0}^{r^x_i-1}
\nu^{x,i}_k(z)((w_v)^x_i)^k
&=
\sum_{k=0}^{r^x_i-1}\nu^{x,i}_k(z)\big(w^x_i+\overline{t}
(\theta^x_i\circ\xi^x_i+\kappa^x_i\circ\zeta^x_i)\big)^k
\\
&=
\sum_{k=0}^{r^x_i-1}\nu^{x,i}_k(z)(w^x_i)^k
\\
&\qquad
+
\overline{t}\sum_{k=1}^{r^x_i-1}\sum_{l=1}^k
\nu^{x,i}_k(z)(w^x_i)^{k-l}\circ(\theta^x_i\circ\xi^x_i+\zeta^x_i\circ\kappa^x_i)
\circ(w^x_i)^{l-1}
\\
&=\big(\overline{\nabla}_v\big)^x_i
\end{split}
\end{align}
for $U_{\alpha}$ containing $x$.
Set 
${\mathcal A}^x_i[\overline{t}]:=
{\mathcal A}^x_i\otimes\mathbb{C}[t]/(t^2)$
and take a lift 
$\tilde{u}\in (l_v)^x_i/(l_v)^x_{i+1}$ of $\sigma^{x,i}(1)\in l^x_i/l^x_{i+1}$.
Then the homomorphism
$
\sigma^{x,i}[v]\colon {\mathcal A}^x_i[\overline{t}]
\rightarrow (l_v)^x_i/(l_v)^x_{i+1}
$
defined by
\[
 \sigma^{x,i}[v]\Big(
 \sum_{0\leq k\leq r^x_i-1}b_k(z)w^k\Big)=
 \sum_{0\leq k\leq r^x_i-1} b_k(z)\big((w_v)^x_i\big)^k\tilde{u}
 \qquad
 (b_k(z)\in{\mathcal O}_{m_xx[\overline{t}]})
\]
satisfies
$\sigma^{x,i}[v]\circ w\cdot\mathrm{id}_{{\mathcal A}^x_i[\overline{t}]}
=(w_v)^x_i\circ \sigma^{x,i}[v]$.
Combining with the equality \eqref{local deformation of connection},
we have
$(\sigma^{x,i}[v]\otimes\mathrm{id})\circ
\nu^{x,i}(w)\mathrm{id}_{{\mathcal A}^x_i[\overline{t}]}
=(\overline{\nabla}_v)^x_i\circ \sigma^{x,i}[v]$.
Thus 
$(E_v,\nabla_v,\{(l_v)^x_i, 
{ \mathcal A}^x_i[\overline{t}] \sigma^{x,i}[v]  \})$
becomes a lift of
$(E,\nabla,\{l^x_i, {\mathcal A}^x_i \sigma^{x,i} \})$
and determines an element of 
${\mathcal M}^{\alpha}_{\mathrm{Hod}}(\Spec\mathbb{C}[t]/(t^2))$.
So we get a tangent vector of $M^{\alpha}_{\mathrm{Hod}}(\lambda,\nu)$.
The converse direction is proved in the same way as in 
\cite[Proposition 2.17]{Inaba2}.
\end{proof}

In order to calculate $\mathbb{H}^1({\mathcal F}^{\bullet})$
for the complex ${\mathcal F}^{\bullet}$ in \eqref{complex defining tangent space},
consider the complexes
\[
{\mathcal F}_0^{\bullet}:=
[{\mathcal G}^0\oplus A^0 \rightarrow \overline{S\,}^2(E|_D)],
\qquad
{\mathcal F}_1^{\bullet}:=
[{\mathcal G}^1\oplus \overline{S\,}^2(E|_D^{\vee})
\rightarrow \overline{\mathcal G\,}^1\oplus A^2].
\]
Then there is an exact sequence of complexes
\begin{equation*} \label {decomposition of the complex}
0\longrightarrow {\mathcal F}_1^{\bullet}[-1] \longrightarrow
{\mathcal F}^{\bullet} \longrightarrow {\mathcal F}_0^{\bullet}
\longrightarrow 0,
\end{equation*}
which induces the long exact sequence
\begin{align} \label {long exact sequence for tangent}
\begin{split}
&0 \longrightarrow \mathbb{H}^0({\mathcal F}^{\bullet})
\longrightarrow \mathbb{H}^0({\mathcal F}_0^{\bullet})
\longrightarrow \mathbb{H}^0({\mathcal F}_1^{\bullet})
\longrightarrow \mathbb{H}^1({\mathcal F}^{\bullet})
\\
& \hspace{100pt}
\longrightarrow \mathbb{H}^1({\mathcal F}_0^{\bullet})
\longrightarrow \mathbb{H}^1({\mathcal F}_1^{\bullet})
\longrightarrow \mathbb{H}^2({\mathcal F}^{\bullet})
\longrightarrow 0.
\end{split}
\end{align}
An element $v\in\mathbb{H}^1({\mathcal F}^{\bullet})$
is expressed as
\[
v=[\{u_{\alpha\beta}\},\{v_{\alpha},\rho_{\alpha}\}],
\]
where 
$\{U_{\alpha}\}$ is an affine open covering of $X$,
$\{u_{\alpha\beta}\}\in C^1(\{U_{\alpha}\},{\mathcal G}^0)$,
$\{v_{\alpha}\}\in C^0(\{U_{\alpha}\},{\mathcal G}^1)$,
$\{\rho_{\alpha}\}\in 
C^0\big(\{U_{\alpha}\},\overline{S\,}^2(E|_D)
\oplus\overline{S\,}^2(E|_D^{\vee})\big)$
and $\rho_{\alpha}=(\zeta^x_{i,\alpha},\xi^x_{i,\alpha})$
such that
\begin{align*}
&\nabla u_{\alpha\beta}-u_{\alpha\beta}\nabla=v_{\beta}-v_{\alpha},
\quad
(v_{\alpha})^x_i=
\sum_{k=1}^{r^x_i-1}\sum_{l=1}^k \nu^{x,i}_k(z)
(w^x_i)^{k-l}(\theta^x_{i}\xi^x_{i,\alpha}
+\zeta^x_{i,\alpha}\kappa^x_{i})(w^x_i)^{l-1},
\\
& \Tr\big( (w^x_i)^k\circ(\theta^x_i\circ\xi^x_{i,\alpha}+\zeta^x_{i,\alpha}\circ\kappa^x_i)\big)=0
\qquad (0\leq k\leq r^x_i-1).
\end{align*}
For 
$\rho=((\zeta^x_i),(\xi^x_i)),
\rho'=((\zeta'^x_i),(\xi'^x_i))
\in \overline{S\,}^2(E|_D)\oplus\overline{S\,}^2(E|_D^{\vee})$,
put
\[
\Xi(\rho,\rho'):=
\frac{1}{2} \Big(  \sum_{i=1}^{s_x}\sum_{k=1}^{r^x_i-1}
\sum_{l=1}^k
\nu^{x,i}_k(z)
\Tr\big(
\zeta'^x_i\,^t(w^x_i)^{k-l}\xi^x_i(w^x_i)^{l-1}
-(w^x_i)^{k-l}\zeta^x_i(\,^tw^x_i)^{l-1}\xi'^x_i
\big)\Big),
\]
which is in $\Omega^1_X(D)|_D$.
Define a bilinear pairing
\begin{equation}
\label {nondegenerate pairing}
 \omega_{\mathbb{H}^1({\mathcal F}^{\bullet})}
 \colon \mathbb{H}^1({\mathcal F}^{\bullet}) \times\mathbb{H}^1({\mathcal F}^{\bullet})
 \:\longrightarrow\:
 \mathbb{H}^2\big({\mathcal O}_X\!\xrightarrow{d}\!\Omega^1_X(D)\!
 \rightarrow\!\Omega^1_X(D)|_D\big)
 \!=\!H^2(X,\mathbb{C})=\mathbb{C}
\end{equation}
by setting
\begin{align*}
&\omega_{\mathbb{H}^1({\mathcal F}^{\bullet})}
\big([\{u_{\alpha\beta}\},\{v_{\alpha},\rho_{\alpha}\}],
[\{u'_{\alpha\beta}\},\{v'_{\alpha},\rho'_{\alpha}\}]\big)
\\
&\qquad
=
\big[
\{\Tr(u_{\alpha\beta}u'_{\beta\gamma})\},
\{-\Tr(u_{\alpha\beta}v'_{\beta}-v_{\alpha}u'_{\alpha\beta})\},
\{\Xi(\rho_{\alpha},\rho'_{\alpha})\}
\big]
\end{align*}
for 
$[\{u_{\alpha\beta}\},\{v_{\alpha},\rho_{\alpha}\}],
[\{u'_{\alpha\beta}\},\{v'_{\alpha},\rho'_{\alpha}\}]
\in\mathbb{H}^1({\mathcal F}^{\bullet})$.
We can see by the same argument as in 
\cite[Section 2.5]{Inaba2} that
the pairing \eqref{nondegenerate pairing}
is well-defined.

\begin{Lem} \label {lemma: nondegenerate}
The pairing $\omega_{\mathbb{H}^1({\mathcal F}^{\bullet})}$
in \eqref {nondegenerate pairing} is nondegenerate.
\end{Lem}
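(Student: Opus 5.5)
We follow the strategy of \cite[Proposition 2.18]{Inaba2}. Nondegeneracy will come from comparing the long exact sequence \eqref{long exact sequence for tangent} with the one for the dual complex. The goal is to show that the pairing $\omega_{\mathbb{H}^1({\mathcal F}^{\bullet})}$ induces a morphism of long exact sequences.

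\textbf{Step 1: duality of the two-term complexes.} We first check that ${\mathcal F}_1^{\bullet}$ is (Serre–Grothendieck) dual to ${\mathcal F}_0^{\bullet}$ up to shift, i.e.\ ${\mathcal F}_1^{\bullet}\cong ({\mathcal F}_0^{\bullet})^{\vee}\otimes\Omega^1_X[-1]$ termwise. Concretely we use three pairings.
\begin{itemize}
\item The trace pairing ${\mathcal G}^0\times{\mathcal G}^1\to\Omega^1_X(D)$ identifies ${\mathcal G}^1$ with $({\mathcal G}^0)^{\vee}\otimes\Omega^1_X$. This holds because the flag condition at $l^x_i$ is self-annihilating modulo the graded pieces, and the cokernel of ${\mathcal G}^1\hookrightarrow({\mathcal G}^0)^\vee\otimes\Omega^1_X(D)$ is matched by $\overline{\mathcal G\,}^1$.
\item $A^2=\Hom(A^0,\Omega^1_X(D)|_D)$ is dual to $A^0$ by definition.
\item $\overline{S\,}^2(E|_D^{\vee})$ is dual to $\overline{S\,}^2(E|_D)$ via $(\zeta,\xi)\mapsto\Tr(\zeta\xi)\,dz/z^{m_x}$, using the residue pairing on $\mathcal O_{m_xx}$.
\end{itemize}
Under these identifications we verify, by a direct trace computation of the same kind as the definition of $\Xi$, that $d_{{\mathcal F}_1^\bullet}$ is (minus) the transpose of $d_{{\mathcal F}_0^\bullet}$. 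The computation uses ${}^t\theta=\theta$, ${}^t\kappa=\kappa$, $w=\theta\kappa$, and the explicit formulas for $\tau_1,\tau_2,\eta,\Theta$.

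\textbf{Step 2: morphism of long exact sequences.} By Serre duality for hypercohomology on the curve, $\mathbb{H}^j({\mathcal F}_1^\bullet)\cong\mathbb{H}^{1-j}({\mathcal F}_0^\bullet)^\vee$. We then check that the pairing $\omega$ is compatible with \eqref{long exact sequence for tangent} and its dual sequence. On classes coming from $\mathbb{H}^0({\mathcal F}_1^\bullet)$, $\omega$ restricts to the duality pairing with $\mathbb{H}^1({\mathcal F}_0^\bullet)$. This gives a commutative ladder from
\[
\mathbb{H}^0({\mathcal F}_0^\bullet)\to\mathbb{H}^0({\mathcal F}_1^\bullet)\to\mathbb{H}^1({\mathcal F}^\bullet)\to\mathbb{H}^1({\mathcal F}_0^\bullet)\to\mathbb{H}^1({\mathcal F}_1^\bullet)
\]
to the dual sequence
\[
\mathbb{H}^1({\mathcal F}_1^\bullet)^\vee\to\mathbb{H}^1({\mathcal F}_0^\bullet)^\vee\to\mathbb{H}^1({\mathcal F}^\bullet)^\vee\to\mathbb{H}^0({\mathcal F}_1^\bullet)^\vee\to\mathbb{H}^0({\mathcal F}_0^\bullet)^\vee,
\]
possibly up to signs. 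The outer four vertical maps are the Serre duality isomorphisms of Step 1, so the five lemma gives that $\mathbb{H}^1({\mathcal F}^\bullet)\to\mathbb{H}^1({\mathcal F}^\bullet)^\vee$ is an isomorphism. Hence $\omega_{\mathbb{H}^1({\mathcal F}^{\bullet})}$ is nondegenerate.

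\textbf{Main obstacle.} We expect the main difficulty to be Step 1 at the points of $D$, in two respects.
\begin{itemize}
\item We must show that the local duality is perfect: the symmetric-form spaces with the traced residue pairing, combined with ${\mathcal G}^0,{\mathcal G}^1$ and $A^0,A^2$, must assemble into an exact duality. We would verify this in the basis $\sigma^{x,i}(w^k)$, where $\theta,\kappa$ are explicit (Hankel-type) matrices.
\item We must show that the term $\Xi$ is exactly what makes the Čech representative of $\omega$ compatible with the connecting maps. For this we would redo the computation of \cite[Section 2.5]{Inaba2}, replacing the unramified diagonal structure by the relation $(w^x_i)^{r^x_i}=z$. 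We would use \cite[Lemma 1.4]{Inaba2} to handle the trace-zero constraint imposed by $\Theta$.
\end{itemize}
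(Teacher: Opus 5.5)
Your architecture matches the paper's: the ladder from $0\to\mathcal{F}_1^\bullet[-1]\to\mathcal{F}^\bullet\to\mathcal{F}_0^\bullet\to 0$ together with the five lemma. The gap is in the outer vertical maps, and it is not a sign issue.

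\textbf{The induced pairing is twisted.} You pair $\overline{S\,}^2(E|_D)$ with $\overline{S\,}^2(E|_D^{\vee})$ by $\Tr(\zeta\xi)\,\mathrm{d}z/z^{m_x}$ and $A^0$ with $A^2$ by evaluation. You then claim that $\omega$ restricts to the resulting Serre pairing on classes coming from $\mathbb{H}^0(\mathcal{F}_1^\bullet)$. It does not. For $[\{0\},\{v_\alpha,(0,\xi_\alpha)\}]$ the definition of $\omega$ gives $[\{0\},\{\Tr(v_\alpha u'_{\alpha\beta})\},\{\Xi((0,\xi_\alpha),(\zeta'_\alpha,0))\}]$. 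Writing $\zeta'=Z\theta^x_i$ and using $\theta^x_i({}^tw^x_i)^j=(w^x_i)^j\theta^x_i$, one finds $\Xi((0,\xi),(\zeta',0))=\frac12\Tr(M(Z)\,\theta^x_i\xi)$. Here $M(Z)=\sum_k\nu^{x,i}_k(z)\sum_{l=1}^k(w^x_i)^{l-1}Z(w^x_i)^{k-l}$ is the divided difference $(\nu^{x,i}(L)-\nu^{x,i}(R))/(L-R)$, with $L,R$ left and right multiplication by $w^x_i$. Correspondingly, the pairing on $A^0\times A^2$ compatible with the connecting maps is, up to sign, $(P,b)\mapsto\frac12\, b(a^{x,i}(w)'P(w))$. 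This is exactly the extra term in the paper's $b_1$.

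\textbf{Step 1 fails for the same reason.} With untwisted pairings, and the trace pairing of $\mathcal{G}^0$ with $\overline{\mathcal{G}\,}^1$, the transpose of $u\mapsto -u\theta^x_i-\theta^x_i\,{}^tu$ is $\xi\mapsto -2\theta^x_i\xi\,\mathrm{d}z/z^{m_x}$. The actual component of $d_{\mathcal{F}_1^\bullet}$ is $\xi\mapsto-\eta(0,\xi)=-M(\theta^x_i\xi)$. The two differ by the operator $\frac12 M$, so $d_{\mathcal{F}_1^\bullet}$ is not $\pm$ the transpose of $d_{\mathcal{F}_0^\bullet}$.

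\textbf{What is missing.} You need to use these twisted local pairings, which are the ones making the pairing of complexes well defined and the ladder commute, and then prove they are perfect. This is exactly where the genericity condition $a^{x,i}_1(0)\neq 0$ enters, and your proposal never invokes it.
\begin{itemize}
\item It makes $a^{x,i}(w)'$ a unit of $\mathcal{A}^x_i$.
\item It gives $M=a^{x,i}_1(z)\frac{\mathrm{d}z}{z^{m_x}}(\mathrm{id}+N)$ with $N$ nilpotent, since $L$ and $R$ are commuting nilpotents. Hence $M$ is invertible.
\item Since $M(P(w^x_i))=a^{x,i}(w^x_i)'P(w^x_i)\frac{\mathrm{d}z}{z^{m_x}}$, the induced operator $\zeta\mapsto M(\zeta(\theta^x_i)^{-1})\theta^x_i$ preserves the symmetric forms and the image $\{P(w^x_i)\theta^x_i\}$ of $A^0$.
\item Consequently $\Xi$ stays perfect between $\coker(A^0\to\overline{S\,}^2(E|_D))$ and $\ker(\overline{S\,}^2(E|_D^{\vee})\to A^2)$.
\end{itemize}
With this in hand, the paper's two auxiliary ladders show that $b_1,b_2$ are isomorphisms. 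They separate the global duality $\mathbb{H}^j(\mathcal{G}^1\to\overline{\mathcal{G}\,}^1)\cong H^{1-j}(\mathcal{G}^0)^\vee$ from the local pieces, and also use $\ker(A^0\to\overline{S\,}^2(E|_D))=0$ and $\coker(\overline{S\,}^2(E|_D^{\vee})\to A^2)=0$.

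\textbf{A smaller inaccuracy.} Your first bullet is wrong as stated: it is $\ker(\mathcal{G}^1\to\overline{\mathcal{G}\,}^1)$, not $\mathcal{G}^1$, that equals $(\mathcal{G}^0)^\vee\otimes\Omega^1_X$. So the two complexes are dual only up to quasi-isomorphism, not termwise.
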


\begin{proof}
The bilinear pairing \eqref{nondegenerate pairing}
induces a homomorphism
$b\colon \mathbb{H}^1({\mathcal F}^{\bullet})
\rightarrow \mathbb{H}^1({\mathcal F}^{\bullet})^{\vee}$.
We have to prove that $b$ is an isomorphism.
There is an exact commutative diagram
\begin{equation} \label {diagram for five lemma}
\begin{CD}
\mathbb{H}^0({\mathcal F}_0^{\bullet}) @>>> \mathbb{H}^0({\mathcal F}_1^{\bullet})
@>>> \mathbb{H}^1({\mathcal F}^{\bullet}) @>>>
\mathbb{H}^1({\mathcal F}_0^{\bullet}) @>>>
\mathbb{H}^1({\mathcal F}_1^{\bullet}) 
\\
@V b_1 VV @V b_2 VV @V b VV @V ^tb_2 VV @V ^tb_1 VV
\\
\mathbb{H}^1({\mathcal F}_1^{\bullet})^{\vee} @>>> 
\mathbb{H}^1({\mathcal F}_0^{\bullet})^{\vee}
@>>> \mathbb{H}^1({\mathcal F}^{\bullet})^{\vee} @>>>
\mathbb{H}^0({\mathcal F}_1^{\bullet})^{\vee} @>>>
\mathbb{H}^0({\mathcal F}_0^{\bullet})^{\vee}\rlap{,} 
\end{CD}
\end{equation}
where $b_2$, $^tb_2$ are given by the pairing
\begin{equation} \label {pairing for tau_2}
 \mathbb{H}^0({\mathcal F}_1^{\bullet})\times \mathbb{H}^1({\mathcal F}_0^{\bullet})
 \:\longrightarrow\: 
 \mathbb{H}^2\big({\mathcal O}_X\!\xrightarrow{d}\!\Omega^1_X(D)\!
 \rightarrow\!\Omega^1_X(D)|_D\big)
 \!=\!H^2(X,\mathbb{C})=\mathbb{C}
\end{equation}
defined by
\[
([\{v_{\alpha}\},\{(\xi^x_{i,\alpha})\}],\,
[\{u'_{\alpha\beta}\},\{(\zeta'^x_{i,\alpha})\}])
\mapsto
[\{\Tr(v_{\alpha}u'_{\alpha\beta})\},
\{\Xi((0,\xi^x_{i,\alpha}),(\zeta'^x_{i,\alpha},0))\}],
\]
and $b_1$, $^tb_1$ are given by the pairing
\begin{equation} \label {pairing for tau_1}
 \mathbb{H}^0({\mathcal F}_0^{\bullet})\times \mathbb{H}^1({\mathcal F}_1^{\bullet})
 \:\longrightarrow\: 
 \mathbb{H}^2\big({\mathcal O}_X\!\xrightarrow{d}\!\Omega^1_X(D)\!
 \rightarrow\!\Omega^1_X(D)|_D\big)
 \!=\!H^2(X,\mathbb{C})=\mathbb{C}
\end{equation}
defined by
\begin{align*}
&\big([\{u_{\alpha}\},\{(P^x_{i,\alpha}(w))\}],\,
[\{v'_{\alpha\beta}\},\{(v'^x_{i,\alpha}),b^x_{i,\alpha})\}]\big)
\\
&\qquad
\mapsto \;
-\big[ \{\Tr(u_{\alpha}v'_{\alpha\beta})\},
\big\{\Tr(u_{\alpha}v'^x_{i,\alpha})+
\frac{1}{2}b^x_{i,\alpha}\big(a^{x,i}(w)'P^x_{i,\alpha}(w)\big)\big\}\big]
\end{align*}
with $a^{x,i}(w)':=\sum_{k=1}^{r^x_i-1}ka^{x,i}_k(z)w^{k-1}$.
Here $a^{x,i}_k(z)$ are the polynomials in
Definition \ref{def: generic ramified local exponent data intro}.
The pairing \eqref{pairing for tau_2} is nondegenerate,
since $b_2$ fits into the exact commutative diagram
\[
\xymatrix@=1.5pc{
0\rightarrow \mathbb{H}^0({\mathcal G}^1\rightarrow\overline{\mathcal G^1}) 
\ar[r] \ar[d]^{\cong} & 
\mathbb{H}^0({\mathcal F}_1^{\bullet}) \ar[r] \ar[d]^{b_2} 
& \ker(\overline{S\,}^2(E|_D^{\vee})\rightarrow A^2) \ar[r] \ar[d]^{\cong}
& \mathbb{H}^1({\mathcal G}^1\rightarrow\overline{{\mathcal G}^1}) \ar[d]^{\cong}
\\
0\longrightarrow H^1({\mathcal G}^0)^{\vee} \ar[r] &
\mathbb{H}^1({\mathcal F}_0^{\bullet})^{\vee}  \ar[r] &
\coker(A^0\rightarrow\overline{S\,}^2(E|_D))^{\vee}  \ar[r]
& H^0({\mathcal G}^0)^{\vee}\rlap{.}
}
\]
The pairing \eqref{pairing for tau_1} is also nondegenerate,
since $b_1$ fits into the exact commutative diagram
\[
\xymatrix@=1.5pc{
\ker(A^0\rightarrow \overline{S\,}^2(E|_D))=0 \ar[r] \ar[d]^{\cong} &
\mathbb{H}^0({\mathcal F}_0^{\bullet}) \ar[r] \ar[d]^{b_1} &
H^0({\mathcal G}^0) \ar[r] \ar[d]^{\cong} &
\coker(A^0\rightarrow \overline{S\,}^2(E|_D)) \ar[d]^{\cong}
\\
\coker(\overline{S\,}^2(E|_D^{\vee})\rightarrow A^2)^{\vee}=0 \ar[r] &
\mathbb{H}^1({\mathcal F}_1^{\bullet})^{\vee} \ar[r] &
\mathbb{H}^1({\mathcal G}^1\rightarrow\overline{\mathcal G\,}^1)^{\vee} \ar[r] &
\ker(\overline{S\,}^2(E|_D^{\vee})\rightarrow A^2)^{\vee}\rlap{.}
}
\]
The diagram \eqref {diagram for five lemma} and the five lemma yield that 
$b\colon \mathbb{H}^1({\mathcal F}^{\bullet})\rightarrow
\mathbb{H}^1({\mathcal F}^{\bullet})^{\vee}$
is an isomorphism.
\end{proof}

\begin{Prop} \label {Prop: smoothness of moduli}
The moduli space
$M^{\alpha}_{\mathrm{Hod}}$
is smooth over $\Lambda$.
\end{Prop}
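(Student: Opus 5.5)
We will prove smoothness over $\Lambda$ through the infinitesimal lifting criterion, with the obstruction space given by $\mathbb{H}^2(\mathcal{F}^{\bullet})$. Take a small extension $0\to I\to A'\to A\to 0$ of local Artinian $\mathcal{O}_\Lambda$-algebras with $\mathfrak{m}_{A'}I=0$, and an $A$-valued point $(E_A,\nabla_A,\{l^x_{i,A},L^x_{i,A}\})$ of $M^{\alpha}_{\mathrm{Hod}}$ over $\Spec A\to\Lambda$. The morphism $\Spec A\to\Lambda$ already lifts to $\Spec A'\to\Lambda$, so the parameters $(\lambda,\nu)$ are fixed in $A'$. Locally we can lift all the data, following the \v{C}ech description in the proof of Proposition~\ref{Prop: tangent space}. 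On an affine cover with $E$ trivial, we lift $E_A$, the filtrations, the $\lambda$-connection, and the factorization $(\theta^x_i,\kappa^x_i)$ with $^t\theta=\theta$ and $^t\kappa=\kappa$. The local lift of $\kappa$ must satisfy $(\theta\kappa)^{r^x_i}=z$ and $\overline{\nabla}^x_i=\nu^{x,i}(\theta\kappa)$; equivalently, we lift a generator $\sigma^{x,i}$ of the frame together with $\nabla$ near $x$ in normal form, which is possible because the condition is explicit and $\nu$ lifts. The discrepancies of the local lifts form a $2$-cocycle of $\mathcal{F}^{\bullet}\otimes I$. Its class $\mathrm{ob}\in\mathbb{H}^2(\mathcal{F}^{\bullet})\otimes I$ vanishes if and only if a global lift exists. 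This is the standard argument of \cite[Section 2]{Inaba2}, adapted by the same computations as in Proposition~\ref{Prop: tangent space}.

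The key step is to show that the obstruction vanishes, and this is done by computing $\mathbb{H}^2(\mathcal{F}^{\bullet})$. By the long exact sequence \eqref{long exact sequence for tangent}, $\mathbb{H}^2(\mathcal{F}^{\bullet})$ is the cokernel of $\mathbb{H}^1(\mathcal{F}_0^{\bullet})\to\mathbb{H}^1(\mathcal{F}_1^{\bullet})$. Using the pairings in the proof of Lemma~\ref{lemma: nondegenerate}, $\mathbb{H}^1(\mathcal{F}_1^{\bullet})\cong\mathbb{H}^0(\mathcal{F}_0^{\bullet})^{\vee}$ and $\mathbb{H}^1(\mathcal{F}_0^{\bullet})\cong\mathbb{H}^0(\mathcal{F}_1^{\bullet})^{\vee}$, compatibly with the maps. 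Hence $\mathbb{H}^2(\mathcal{F}^{\bullet})\cong\mathbb{H}^0(\mathcal{F}^{\bullet})^{\vee}$. Next we compute $\mathbb{H}^0(\mathcal{F}^{\bullet})$. An element is a pair $(u,(P^x_i))$ with $\nabla u=u\nabla$, $u\in\mathcal{G}^0$, and $\tau_1=\tau_2=0$. These conditions say that $u$ preserves the frames up to the action of $P^x_i(w)$, so $u$ is an endomorphism of the parabolic $\lambda$-connection. By $\alpha$-stability, $u=c\cdot\mathrm{id}$, and then $P^x_i=2c$ is forced. Therefore $\mathbb{H}^0(\mathcal{F}^{\bullet})\cong\mathbb{C}$ and $\mathbb{H}^2(\mathcal{F}^{\bullet})\cong\mathbb{C}$.

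The main obstacle is that $\mathbb{H}^2(\mathcal{F}^{\bullet})$ is one-dimensional rather than zero, so we must show that the obstruction lies in the kernel of the trace map $\mathbb{H}^2(\mathcal{F}^{\bullet})\to\mathbb{H}^2(\mathcal{O}_X\to\Omega^1_X(D)\to\Omega^1_X(D)|_D)=H^2(X,\mathbb{C})$. This trace map is dual to $\mathbb{C}\cdot\mathrm{id}\hookrightarrow\mathbb{H}^0(\mathcal{F}^{\bullet})$, so it is an isomorphism. The plan is to show that $\Tr(\mathrm{ob})$ is the obstruction to lifting the determinant data $(\det E_A,\Tr\nabla_A)$, a rank one $\lambda$-connection whose residues are prescribed by $\sum_i r^x_i\nu^{x,i}_0$. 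These determinant data always lift, because rank one $\lambda$-connections with fixed polar parts form a torsor that is unobstructed once the Fuchs relation (iii) holds over $A'$, and the Fuchs relation does hold since $(\lambda,\nu)\in\Lambda(A')$. We also expect to check that the $\Xi$- and $\Theta$-components of the trace of the cocycle vanish, using that the normalized traces $\Tr((w^x_i)^k\cdot)$ are constrained by $\Theta$. It follows that $\mathrm{ob}=0$, which gives smoothness. As a consistency check, $\dim\mathbb{H}^1(\mathcal{F}^{\bullet})=-\chi(\mathcal{F}^{\bullet})+\dim\mathbb{H}^0+\dim\mathbb{H}^2$ is then constant on fibers.
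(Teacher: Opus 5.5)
Your proposal is correct and follows essentially the paper's route: the same duality diagram gives $\mathbb{H}^2(\mathcal{F}^{\bullet})\cong\mathbb{H}^0(\mathcal{F}^{\bullet})^{\vee}\cong\mathbb{C}$ by stability, and the remaining one-dimensional piece is disposed of through the trace/determinant, which is unobstructed because the Fuchs relation holds over $A'$. The paper phrases this last step as smoothness of $\det\colon M^{\alpha}_{\mathrm{Hod}}\to N\times_{\Lambda_1}\Lambda$, with obstructions in $\mathbb{H}^2(\mathcal{F}^{\bullet}_{\mathfrak{sl}})=0$ via the splitting $\mathcal{F}^{\bullet}\simeq\mathcal{F}^{\bullet}_{\mathfrak{sl}}\oplus\Omega^{\bullet}_X$, which is equivalent to your claim that $\Tr(\mathrm{ob})$ is the determinant obstruction (note that the trace on $\mathcal{F}^2$ making $\Tr$ a chain map is $(\bar v,\phi)\mapsto\sum_i\Tr\bar v^x_i+\phi\big((a^{x,i}(w)')_i\big)$, so $\Xi$ plays no role there).
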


\begin{proof}
Set
$
\Lambda_1:=
\big\{
(\lambda,(\eta_x)_{x\in D})
\,\big|\,
\lambda\in\mathbb{C},\;
\eta_x\in\Omega^1_X(D)|_{m_xx}, \;
\lambda d+\sum_{x\in D}\mathrm{res}_x(\eta_x)=0
\big\}
$
and consider the map
\[
\Lambda \, \ni \, (\lambda,(\nu^{x,i}(w))) 
\; \longmapsto \; 
\Big( \lambda, \Big(\sum_{i=1}^{s_x} r^x_i\nu^{x,i}_0(z)\Big)\Big)
\, \in \, \Lambda_1,
\]
which is smooth and surjective.
Let
$N\rightarrow \Lambda_1$
be the relative moduli space of
pairs $(L,\nabla_L)$
of a line bundle $L$ on $X$ and a $\lambda$-connection
$\nabla_L\colon L\rightarrow L\otimes\Omega^1_X(D)$
satisfying $\nabla_L|_{m_xx}=\eta_x\mathrm{id}_{L|_{m_xx}}$,
where  $(\lambda,(\eta_x))\in \Lambda_1$.
Then $N$ is smooth over $\Lambda_1$, since it is an affine space bundle
over a family of Jacobian varieties over $\Lambda_1$.
Let
\[
\det\colon M^{\alpha}_{\mathrm{Hod}} 
\longrightarrow N\times_{\Lambda_1}\Lambda
\]
be the morphism defined by $\det(E,\nabla,\{l^x_i,L^x_i\})=\det (E,\nabla)$.
In order to prove the smoothness of
$M^{\alpha}_{\mathrm{Hod}}$ over $\Lambda$,
it is sufficient to prove that
$\det$ is a smooth morphism.
For $(E,\nabla,\{l^x_i, {\mathcal A}^x_i \sigma^{x,i}   \})\in M^{\alpha}_{\mathrm{Hod}}$,
set
\begin{align*}
&{\mathcal G}_{\mathfrak{sl}}^0:=
\big\{ u\in{\mathcal G}^0\,\big|\, \Tr(u)=0 \big\},
\quad
{\mathcal G}_{\mathfrak{sl}}^1:=
\big\{ v\in{\mathcal G}^1\,\big|\, \Tr(v)=0 \big\},
\\
&\overline{\mathcal G}_{\mathfrak{sl}}^1:=
\big\{ (v^x_i)\in \overline{\mathcal G\,}^1\,\big|\,
\sum_{i=1}^{s_x}\Tr(v^x_i)=0 \; (\forall x\in D) \big\},
\end{align*}
and define the complex ${\mathcal F}_{\mathfrak{sl}}^{\bullet}$ by
\[
{\mathcal F}_{\mathfrak{sl}}^0
:=
{\mathcal G}_{\mathfrak{sl}}^0\oplus A^0,
\quad
{\mathcal F}_{\mathfrak{sl}}^1
:=
{\mathcal G}_{\mathfrak{sl}}^1\oplus
\overline{S\,}^2(E|_D)\oplus\overline{S\,}^2(E|_D^{\vee}),
\quad
{\mathcal F}_{\mathfrak{sl}}^2
:=
\overline{\mathcal G}^1_{\mathfrak{sl}}\oplus A^2,
\]
and
\[
d^0_{{\mathcal F}^{\bullet}_{\mathfrak{sl}}}
:=d^0_{{\mathcal F}^{\bullet}}|_{{\mathcal F}^0_{\mathfrak{sl}}}\colon
{\mathcal F}_{\mathfrak{sl}}^0\longrightarrow{\mathcal F}_{\mathfrak{sl}}^1,
\qquad
d^1_{{\mathcal F}^{\bullet}_{\mathfrak{sl}}}
:=d^1_{{\mathcal F}^{\bullet}}|_{{\mathcal F}^1_{\mathfrak{sl}}}\colon
{\mathcal F}_{\mathfrak{sl}}^1\longrightarrow{\mathcal F}_{\mathfrak{sl}}^2.
\]
Then the deformation obstruction for the smoothness of
$M^{\alpha}_{\mathrm{Hod}}\rightarrow N$ lies in 
$\mathbb{H}^2({\mathcal F}_{\mathfrak{sl}}^{\bullet})$.
So we only have to prove $\mathbb{H}^2({\mathcal F}_{\mathfrak{sl}}^{\bullet})=0$.
There is a canonical map
${\mathcal F}_{\mathfrak{sl}}^{\bullet}\rightarrow{\mathcal F}^{\bullet}$
and ${\mathcal F}^{\bullet}$
is quasi-isomorphic to 
${\mathcal F}_{\mathfrak{sl}}^{\bullet}\oplus\Omega_X^{\bullet}$.
So we have
\begin{equation} \label {trace decomposition of H^2}
\mathbb{H}^2({\mathcal F}^{\bullet})
=\mathbb{H}^2({\mathcal F}_{\mathfrak{sl}}^{\bullet})
\oplus \mathbb{H}^2(\Omega_X^{\bullet}).
\end{equation}
The exact sequence \eqref{long exact sequence for tangent}
and the commutative diagram \eqref{diagram for five lemma}
induce the commutative diagram
\[
\begin{CD}
\mathbb{H}^1({\mathcal F}_0^{\bullet})
@>>> \mathbb{H}^1({\mathcal F}_1^{\bullet})
@>>> \mathbb{H}^2({\mathcal F}^{\bullet})
@>>> 0
\\
@V ^t\tau_2 V \cong V  @V ^t\tau_1 V\cong V  @VVV
\\
\mathbb{H}^0({\mathcal F}_1^{\bullet})^{\vee}
@>>> \mathbb{H}^0({\mathcal F}_0^{\bullet})^{\vee}
@>>> \mathbb{H}^0({\mathcal F}^{\bullet})^{\vee}
@>>> 0 \rlap{,}
\end{CD}
\]
which yields the isomorphism
$\mathbb{H}^2({\mathcal F}^{\bullet})\xrightarrow{\sim}
\mathbb{H}^0({\mathcal F}^{\bullet})^{\vee}$.
An element $u$ of
$\mathbb{H}^0({\mathcal F}^{\bullet})$ is 
an endomorphism of $E$ preserving $\nabla$ and $\{l^x_i\}$.
Since $(E,\nabla,\{l^x_i,L^x_i\})$ is
$\alpha$-stable, $u$ must be a scalar multiplication.
Thus we have $\mathbb{H}^0({\mathcal F}^{\bullet})=\mathbb{C}$,
which yields $\mathbb{H}^2({\mathcal F}^{\bullet})\cong\mathbb{C}$.
Combining with 
$\mathbb{H}^2(\Omega_X^{\bullet})=H^2(X,\mathbb{C})\cong\mathbb{C}$,
\eqref {trace decomposition of H^2} yields
$\mathbb{H}^2({\mathcal F}_{\mathfrak{sl}}^{\bullet})=0$.
\end{proof}

\begin{Prop}
The fiber
$M^{\alpha}_{\mathrm{Hod}}(\lambda,\nu)$
of the moduli space $M^{\alpha}_{\mathrm{Hod}}$
over $(\lambda,\nu)\in \Lambda$
is of dimension $2r^2(g-1) +2 +r(r-1)\deg D$.
\end{Prop}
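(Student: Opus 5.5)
Since the fiber $M^{\alpha}_{\mathrm{Hod}}(\lambda,\nu)$ is smooth (Proposition~\ref{Prop: smoothness of moduli}) and its tangent space at $(E,\nabla,\{l^x_i,L^x_i\})$ is $\mathbb{H}^1({\mathcal F}^{\bullet})$ (Proposition~\ref{Prop: tangent space}), it suffices to compute $\dim\mathbb{H}^1({\mathcal F}^{\bullet})$. I will use the Euler characteristic
\[
\dim\mathbb{H}^1({\mathcal F}^{\bullet})
=\dim\mathbb{H}^0({\mathcal F}^{\bullet})+\dim\mathbb{H}^2({\mathcal F}^{\bullet})-\chi({\mathcal F}^{\bullet}).
\]
The proof of Proposition~\ref{Prop: smoothness of moduli} gives $\mathbb{H}^0({\mathcal F}^{\bullet})=\mathbb{C}$ by $\alpha$-stability, and $\mathbb{H}^2({\mathcal F}^{\bullet})\cong\mathbb{H}^0({\mathcal F}^{\bullet})^{\vee}\cong\mathbb{C}$. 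Hence $\dim\mathbb{H}^1({\mathcal F}^{\bullet})=2-\chi({\mathcal F}^{\bullet})$, where $\chi({\mathcal F}^{\bullet})=\chi({\mathcal F}^0)-\chi({\mathcal F}^1)+\chi({\mathcal F}^2)$ and each $\chi({\mathcal F}^j)$ is computed termwise by Riemann--Roch and lengths of skyscraper sheaves.

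For the locally free parts, set $c:=\sum_{x}m_x\sum_{i<j}r^x_ir^x_j$, the colength of the parabolic conditions. Then ${\mathcal G}^0\subset{\mathcal End}(E)$ has colength $c$ (the block strictly-lower part with respect to the flag $\{l^x_i\}$ over $\mathcal{O}_{m_xx}$), so $\chi({\mathcal G}^0)=r^2(1-g)-c$. Likewise ${\mathcal G}^1\subset{\mathcal End}(E)\otimes\Omega^1_X(D)$ has colength $c$, so $\chi({\mathcal G}^1)=r^2(g-1)+r^2\deg D-c$. The contribution $-c$ cancels between $\chi({\mathcal G}^0)$ and $-\chi({\mathcal G}^1)$, leaving $2r^2(1-g)-r^2\deg D$.

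The skyscraper terms have lengths
\[
\length A^0=\length A^2=\sum_x m_x\sum_i r^x_i=\deg D\cdot r,\qquad
\length\overline{\mathcal G\,}^1=\sum_x m_x\sum_i (r^x_i)^2,
\]
and
\[
\length\mathrm{Sym}^2(l^x_i/l^x_{i+1})=\length\mathrm{Sym}^2\big((l^x_i/l^x_{i+1})^{\vee}\big)=m_x\,r^x_i(r^x_i+1)/2,
\]
so the two symmetric summands together contribute $\sum_x m_x\sum_i r^x_i(r^x_i+1)$. Summing,
\[
\chi({\mathcal F}^{\bullet})=2r^2(1-g)-r^2\deg D+r\deg D-\sum_x m_x\sum_i r^x_i(r^x_i+1)+\sum_x m_x\sum_i (r^x_i)^2+r\deg D .
\]
The quadratic terms in $r^x_i$ cancel, leaving $-\sum_x m_x\sum_i r^x_i=-r\deg D$. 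Hence
\[
\chi({\mathcal F}^{\bullet})=2r^2(1-g)-r^2\deg D+r\deg D=-2r^2(g-1)-r(r-1)\deg D,
\]
and therefore $\dim\mathbb{H}^1({\mathcal F}^{\bullet})=2r^2(g-1)+2+r(r-1)\deg D$.

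The main point needing care is the colength bookkeeping for ${\mathcal G}^0$ and ${\mathcal G}^1$. I only need these two colengths to agree, not their exact value, and both are defined by the same condition of preserving $l^x_i$ on $m_xx$. Tensoring with the invertible sheaf $\Omega^1_X(D)$ does not change the length of ${\mathcal End}(E)|_{m_xx}/(\cdot)$, so the equality holds. The symmetric-space lengths follow because $l^x_i/l^x_{i+1}$ is free of rank $r^x_i$ over $\mathcal{O}_{m_xx}$.
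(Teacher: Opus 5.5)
Your argument is correct, but it counts differently from the paper. You take the Euler characteristic of the whole three-term complex $\mathcal{F}^{\bullet}$ and compute all six terms by Riemann--Roch and lengths. The colength $c=\sum_x m_x\sum_{i<j}r^x_ir^x_j$ cancels between $\mathcal{G}^0$ and $\mathcal{G}^1$, and the quadratic terms cancel between $\overline{\mathcal G\,}^1$ and the two symmetric summands.

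The paper instead uses the long exact sequence \eqref{long exact sequence for tangent} coming from $0\to\mathcal{F}_1^{\bullet}[-1]\to\mathcal{F}^{\bullet}\to\mathcal{F}_0^{\bullet}\to 0$, together with the dualities $\mathbb{H}^0(\mathcal{F}_1^{\bullet})\cong\mathbb{H}^1(\mathcal{F}_0^{\bullet})^{\vee}$ and $\mathbb{H}^1(\mathcal{F}_1^{\bullet})\cong\mathbb{H}^0(\mathcal{F}_0^{\bullet})^{\vee}$ from the proof of Lemma~\ref{lemma: nondegenerate}. This gives $\dim\mathbb{H}^1(\mathcal{F}^{\bullet})=2-2\chi(\mathcal{F}_0^{\bullet})$. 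So only $\chi(\mathcal{G}^0)$, $\length A^0$ and $\length\overline{S\,}^2(E|_D)$ need computing, and the evenness of the dimension, expected for a symplectic fiber, is visible from the start.

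Your route avoids those two dualities in the counting step, and in effect checks independently that $\chi(\mathcal{F}_1^{\bullet})=-\chi(\mathcal{F}_0^{\bullet})$. Note, however, that you import $\dim\mathbb{H}^2(\mathcal{F}^{\bullet})=1$ from Proposition~\ref{Prop: smoothness of moduli}, and the paper derives that from the same dualities ${}^tb_1,{}^tb_2$. So the two arguments are not logically independent.

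One point you should state explicitly: your formula $\dim\mathbb{H}^1=\dim\mathbb{H}^0+\dim\mathbb{H}^2-\chi(\mathcal{F}^{\bullet})$ requires $\mathbb{H}^q(\mathcal{F}^{\bullet})=0$ for $q\geq 3$. This holds because $X$ is a curve and $\mathcal{F}^2$ is supported on $D$, so $H^2(\mathcal{F}^1)=H^1(\mathcal{F}^2)=0$.
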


\begin{proof}
Since we have
$\mathbb{H}^0({\mathcal F}_1^{\bullet})
\cong\mathbb{H}^1({\mathcal F}_0^{\bullet})^{\vee}$
and
$\mathbb{H}^1({\mathcal F}_1^{\bullet})\cong
\mathbb{H}^0({\mathcal F}_0^{\bullet})^{\vee}$
by the proof of Lemma \ref{lemma: nondegenerate},
the long exact sequence \eqref{long exact sequence for tangent} yields
\begin{align*}
\mathbb{H}^1({\mathcal F}^{\bullet})
&=
\dim \mathbb{H}^0({\mathcal F}_1^{\bullet})
+\dim \mathbb{H}^1({\mathcal F}_0^{\bullet})
-\dim\mathbb{H}^1({\mathcal F}_1^{\bullet})
\\
&\qquad
-\dim\mathbb{H}^0({\mathcal F}_0^{\bullet})
+\dim\mathbb{H}^0({\mathcal F}^{\bullet})
+\dim\mathbb{H}^2({\mathcal F}^{\bullet})
\\
&=
2\dim\mathbb{H}^1({\mathcal F}_0^{\bullet})
-2\dim\mathbb{H}^0({\mathcal F}_0^{\bullet})+2
\\
&=2(1-\chi({\mathcal G}^0)-\length A^0+\length \overline{S}^2(E|_D)).
\end{align*}
By the Riemann--Roch formula, we have
\[
\chi({\mathcal G}^0)=
r^2(1-g)+\deg{\mathcal G}^0
=
-r^2(g-1)-\sum_{x\in D}\sum_{1\leq i<j\leq s_x}r^x_i r^x_jm_x.
\]
From the definition of $A^0$ and $\overline{S\,}^2(E|_D)$, we have
\[
\length A^0=\sum_{x\in D}\sum_{i=1}^{s_x}r^x_im_x \quad \text{and}
\quad
\length \overline{S\,}^2(E|_D)
=
\sum_{x\in D}\sum_{i=1}^{s_x}\frac{1}{2}r^x_i(r^x_i+1)m_x.
\]
Thus  we have
\begin{align*}
\dim \mathbb{H}^1({\mathcal F}^{\bullet})
&=
2+2r^2(g-1)+2\sum_{x\in D}\sum_{1\leq i<j\leq s_x}r^x_ir^x_jm_x
\\
&\qquad
-2\sum_{x\in D}\sum_{i=1}^{s_x}r^x_im_x
+\sum_{x\in D}\sum_{i=1}^{s_x}r^x_i(r^x_i+1)m_x
\\
&=2+2r^2(g-1)+\sum_{x\in D}\bigg(\Big(\sum_{i=1}^{s_x}r^x_i\Big)^2
-\sum_{i=1}^{s_x}r^x_i\bigg)m_x.
\end{align*}
Since $r=\sum_{i=1}^{s_x}r^x_i$ for any $x\in D$,
we get the desired equality.
\end{proof}

\subsection{Relative symplectic form on the moduli space}\label {subsect rel symp str}

Let 
$p\colon X\times M^{\alpha}_{\mathrm{Hod}}\rightarrow 
M^{\alpha}_{\mathrm{Hod}}$
be the projection to the second component.
There is a universal family
$(\tilde{E},\tilde{\nabla},\{\tilde{l}^x_i, {\mathcal A}^x_i \tilde{\sigma}^{x,i}   \}  )$
on $X\times M^{\alpha}_{\mathrm{Hod}}$,
after replacing $M^{\alpha}_{\mathrm{Hod}}$
with an \'etale covering if necessary.
We can define a complex $\tilde{\mathcal F}^{\bullet}$
on $X\times M^{\alpha}_{\mathrm{Hod}}$
in the same way as \eqref{complex defining tangent space}.
By Proposition \ref{Prop: tangent space},
$\mathbb{R}^1p_*(\tilde{\mathcal F}^{\bullet})$
is isomorphic to the relative tangent bundle
$T_{M^{\alpha}_{\mathrm{Hod}}/\Lambda}$.
The nondegenerate pairing \eqref{nondegenerate pairing}
can be extended to a pairing
\begin{equation} \label {symplectic form on the moduli space}
 \omega_{M^{\alpha}_{\mathrm{Hod}}/\Lambda} \colon 
 \mathbb{R}^1p_*(\tilde{\mathcal F}^{\bullet}) \times
 \mathbb{R}^1p_*(\tilde{\mathcal F}^{\bullet})
 \; \longrightarrow \; {\mathcal O}_{M^{\alpha}_{\mathrm{Hod}}}
\end{equation}
on the relative tangent bundle.
We can check by an easy calculation that
$\omega_{M^{\alpha}_{\mathrm{Hod}}/\Lambda}$
is a skew-symmetric pairing and define a relative $2$-form
on $M^{\alpha}_{\mathrm{Hod}}$ over $\Lambda$.
In order to complete the proof of Theorem \ref{Thm: main theorem},
we need the following proposition.

\begin{Prop} \label {Prop: d-closed}
The relative $2$-form 
$\omega_{M^{\alpha}_{\mathrm{Hod}}/\Lambda} 
\in H^0(M^{\alpha}_{\mathrm{Hod}},
\Omega^2_{M^{\alpha}_{\mathrm{Hod}}/\Lambda} )$
defined in \eqref{symplectic form on the moduli space}
is $d$-closed.
\end{Prop}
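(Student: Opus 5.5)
The plan is to reduce closedness to the unramified case of \cite{Inaba2} by a parameter unfolding, as announced in the introduction. Closedness of a relative $2$-form is checked fibrewise, so it suffices to show $d\omega=0$ on each fibre $M^{\alpha}_{\mathrm{Hod}}(\lambda,\nu)$, and in fact only on a dense open subset of each fibre. The reason is that $d\omega$ is a section of a locally free sheaf and the fibres are smooth by Proposition~\ref{Prop: smoothness of moduli}.

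\textbf{Step 1: the unfolded family.} I would enlarge the parameter space by deforming the relation $w^{r^x_i}=z$ to a family $w^{r^x_i}=z+\epsilon_1 w+\cdots$, or more simply to $\prod_{j}(w-c_j)$-type relations. For generic parameters, $\mathcal{A}^x_i$ then splits as $\prod_j \mathcal{O}_{m_xx}$, and a $(\nu^{x,i}(w),\mathcal{A}^x_i)$-frame becomes a generic unramified parabolic structure at $x$ with $r^x_i$ distinct exponents. The factorization $w^x_i=\theta^x_i\circ\kappa^x_i$ with ${}^t\theta=\theta$ and ${}^t\kappa=\kappa$ makes sense for any $\mathcal{O}_{m_xx}$-algebra of the form $\mathcal{O}_{m_xx}[T]/(f(T))$ with $f$ monic of degree $r^x_i$. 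So the complex $\mathcal{F}^\bullet$, the pairing $\Xi$, and the construction of Theorem~\ref{Thm: existence of moduli space} all extend verbatim over the enlarged base $\tilde\Lambda\supset\Lambda$. This gives a family $\tilde M\to\tilde\Lambda$ with $\tilde M\times_{\tilde\Lambda}\Lambda=M^{\alpha}_{\mathrm{Hod}}$. By the same argument as in Proposition~\ref{Prop: smoothness of moduli} and Lemma~\ref{lemma: nondegenerate}, $\tilde M$ is smooth over $\tilde\Lambda$ and carries a relative $2$-form $\tilde\omega$ restricting to $\omega_{M^{\alpha}_{\mathrm{Hod}}/\Lambda}$. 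The genericity conditions of Definition~\ref{def: generic ramified local exponent data intro} are open, so they persist in the unfolding.

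\textbf{Step 2: the unramified locus.} Over the open dense subset $\tilde\Lambda^\circ\subset\tilde\Lambda$ where the deformed relations have distinct roots, I would identify $\tilde M$ with the moduli of $\lambda$-connections with generic unramified parabolic structure. Under this identification I would check that $\tilde\omega$ coincides with the form of \cite{Inaba2}, or with its $\lambda$-version. For $\lambda\neq0$ the closedness there is \cite{Inaba2}. For general $\lambda$, the same proof applies, since Inaba's argument produces the form from the Atiyah class and trace and never uses $\lambda\neq0$. Alternatively, $d\tilde\omega$ is algebraic and vanishes on the dense set $\lambda\neq0$, so it vanishes for all $\lambda$. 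The identification reduces to comparing $\Xi$ with the unramified residue pairing after diagonalizing $w^x_i$. When $w^x_i$ is diagonal with distinct eigenvalues, $\theta$ and $\kappa$ are diagonal, and $\Xi$ collapses to the sum over eigenlines of the standard term.

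\textbf{Step 3: conclusion.} The relative $3$-form $d_{\tilde M/\tilde\Lambda}\tilde\omega$ is a section of a locally free sheaf on $\tilde M$ and vanishes on the preimage of $\tilde\Lambda^\circ$. That preimage is dense because $\tilde M\to\tilde\Lambda$ is smooth, hence flat and open. So $d_{\tilde M/\tilde\Lambda}\tilde\omega$ vanishes identically, and restricting to $\Lambda$ gives the proposition.

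\textbf{Main obstacle.} I expect the difficulty to lie in Step 1, in making the unfolding honest. Two things must be shown: that $\tilde M\to\tilde\Lambda$ is still smooth, so that its fibres are reduced and the density argument works, and that at the points where the roots collide the $\alpha$-stability and the frame data specialize correctly. My first attempt would be to rerun the obstruction computation of Proposition~\ref{Prop: smoothness of moduli} with $\mathcal{A}^x_i$ replaced by $\mathcal{O}_{m_xx}[T]/(f)$. The key input is \cite[Lemma 1.4]{Inaba2}, which requires $f$ to be the minimal polynomial of $w^x_i$; this holds for any monic $f$ because $\sigma$ is a cyclic generator.
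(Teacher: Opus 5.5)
Your strategy is the paper's: unfold $w^{r^x_i}=z$ so that $\mathcal A^x_i$ splits, identify the split fibres with the unramified moduli of \cite{Inaba2}, where closedness is known, and propagate by smoothness and density. The gap is in Step~1, where you claim that the whole construction, including $\alpha$-stability, extends verbatim over a large unfolded base $\tilde\Lambda$. It does not.

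Stability is defined through the flags $\overline{l}^{x,i}_k=z^{m_x-1}\sigma^{x,i}(w^k\mathcal A^x_i)$ of length $r^x_i-k$ (equivalently through the $N_j$ of a framing structure). These flags need $w$ to be nilpotent modulo $z$, and this fails as soon as the roots separate. For example, for $w^{r}=z+\gamma t^{r}$ with $t\neq0$, $w$ is a unit, $w^k\mathcal A=\mathcal A$, and every flag collapses to $z^{m-1}\overline{E}$. So as written $\tilde M$ has no stability condition and is not a well-defined moduli scheme. The smoothness argument you want to rerun also has nothing to stand on, since it uses $\mathbb H^0(\mathcal F^{\bullet})=\mathbb C$, which comes from stability. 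You list this among your obstacles, but your remedy (redoing the obstruction computation) addresses only smoothness.

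The paper closes this by making stability disappear before unfolding:
\begin{itemize}
\item Because $M^{\alpha}_{\mathrm{Hod}}\to\Lambda$ is already smooth, it suffices to prove closedness on the fibre over a \emph{generic} $(\lambda,\nu)$. This is a stronger reduction than your Step~0, which only reduces to dense opens of every fibre.
\item Rescaling gives $\lambda=1$. For generic $\nu$, every connection with a parabolic frame is irreducible, hence automatically stable.
\item The paper then unfolds only along the one-parameter family $\mathcal A^x_{i,t}=\mathcal O_{m_xx}[w]/(w^{r^x_i}-\gamma^x_it^{r^x_i}-z)$ near $t=0$, working with irreducible tuples. No parabolic weights are needed, and $\mathbb H^0=\mathbb C$ gives smoothness over $T$.
\item The fibres over $t\neq0$ are the unramified moduli, and density in $t$ finishes.
\end{itemize}
The same reduction disposes of general $\lambda$. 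So you do not need your unsupported claim that Inaba's closedness argument never uses $\lambda\neq0$; your density alternative is the correct one.

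If you want to keep your global unfolding instead, you would have to unfold the flags too and build the moduli with that new stability condition. One way is to replace $w^k$ by a product of $k$ factors $w-c_j$ for an ordering of the roots, which specializes to $w^k$. That is extra work the paper's route avoids.
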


\begin{proof}
It is sufficient to prove
$d\omega_{M^{\alpha}_{\mathrm{Hod}}/\Lambda} 
\big|_{M^{\alpha}_{\mathrm{Hod}}(\lambda,\nu)}=0$
for generic $(\lambda,\nu)\in \Lambda$,
because $M^{\alpha}_{\mathrm{Hod}}$ is smooth over $\Lambda$.
We may assume $\lambda=1$ by replacing 
$\nabla$ with $\frac{1}{\lambda}\nabla$.
By choosing a sufficiently generic $\nu$,
we may assume that every connection
with a parabolic frame is irreducible.
Take generic $\gamma^x_1,\ldots,\gamma^x_{s_x}\in\mathbb{C}$
and set
${\mathcal A}^x_{i,t}:={\mathcal O}_{m_xx}[w]/(w^{r^x_i}-\gamma^x_it^{r^x_i}-z)$
for $t\in\mathbb{A}^1$.
For a Zariski open neighborhood $T$
of $t=0$ in $\mathbb{A}^1$,
consider the family of moduli spaces
\[
 {\mathcal M}\longrightarrow T
\]
whose fiber ${\mathcal M}_t$ over $t\in T$
is the moduli space of tuples $(E,\nabla,\{l^x_i\},\{[\sigma^{x,i}_t]\})$
consisting of
\begin{itemize}
\item[(i)] an algebraic vector bundle $E$ on $X$,
\item[(ii)] a connection 
$\nabla\colon E\rightarrow E\otimes\Omega^1_X(D)$,
\item[(iii)]
a filtration $E|_{m_xx}=l^x_1\supset l^x_2\supset\cdots\supset l^x_{s_x}=0$
such that $l^x_i/l^x_{i+1}\cong{\mathcal O}_{m_xx}^{\oplus r^x_i}$
for each $x\in D$,
\item[(iv)]
an isomorphism
$\sigma^{x,i}_t\colon {\mathcal A}^x_{i,t}\xrightarrow{\sim} l^x_i/l^x_{i+1}$
such that
$\sigma^{x,i}_t\circ\nu(w)\mathrm{id}_{{\mathcal A}^x_{i,t}}
=\overline{\nabla}^x_i\circ \sigma^{x,i}_t$
and $[\sigma^{x,i}_t]$ is the equivalence class of $\sigma^{x,i}_t$
with respect to the action of $({\mathcal A}^x_{i,t})^{\times}$.
\end{itemize}
By shrinking $T$,
we may assume that every $(E,\nabla,\{l^x_i\},\{ [\sigma^{x,i}_t]\})$ belonging to
${\mathcal M}$ is irreducible.
Then ${\mathcal M}$ becomes smooth over $T$
by the same proof as in Proposition \ref{Prop: smoothness of moduli}.

There is a universal family 
\[
(\tilde{E}_{\mathcal M},\tilde{\nabla}_{\mathcal M},
\{(\tilde{l}_{\mathcal M})^x_i\}, \{ [\tilde{\sigma}^{x,i}_{\mathcal M}]\})
\]
on $X\times {\mathcal M}$
and we can define a complex ${\mathcal F}_{\mathcal M}^{\bullet}$
from $(\tilde{E}_{\mathcal M},\tilde{\nabla}_{\mathcal M},
\{(\tilde{l}_{\mathcal M})^x_i\}, \{ [\tilde{\sigma}^{x,i}_{\mathcal M}]\})$
in the same way as \eqref{complex defining tangent space}.
We can also define a relative $2$-form
\[
\omega_{\mathcal M}\colon
\mathbb{R}^1p_*({\mathcal F}_{\mathcal M}^{\bullet})
\times \mathbb{R}^1p_*({\mathcal F}_{\mathcal M}^{\bullet})
\longrightarrow {\mathcal O}_{\mathcal M}
\]
on ${\mathcal M}$ over $T$
in the same way as \eqref{symplectic form on the moduli space},
or \eqref{nondegenerate pairing}.
When $t\neq 0$, there exist distinct complex numbers
$\mu_1,\ldots,\mu_{r^x_i}$ such that
\begin{align*}
{\mathcal A}^x_{i,t}
=
{\mathcal O}_{m_xx}[w]/(w^{r^x_i}-\gamma^x_it^{r^x_i}-z)
&\cong
{\mathcal O}_{m_xx}\times\cdots\times{\mathcal O}_{m_xx}
\\
&\cong
{\mathcal O}_{m_xx}[u]/(u-\mu_1)\cdots(u-\mu_{r^x_i}).
\end{align*}
Therefore the fiber ${\mathcal M}_t$ over $t \neq 0$
can be identified with the moduli space of stable generic unramified
irregular singular connections constructed in \cite[Theorem 2.11]{Inaba2},
and the restriction
${\mathcal F}_{\mathcal M}^{\bullet}|_{X\times \{t\}}$
is essentially the same as the complex ${\mathcal F}^{\bullet}$
defined in \cite[Section 2.4]{Inaba2}.
Moreover, the restriction $\omega_{\mathcal M}|_{{\mathcal M}_t}$
coincides with the $2$-form given in \cite[Proposition 2.26]{Inaba2},
which yields $d\omega_{\mathcal M}|_{{\mathcal M}_t}=0$.
The smoothness of ${\mathcal M}$ over $T$
yields $d\omega_{\mathcal M}=0$, which implies
$d\omega_{M^{\alpha}_{\mathrm{Hod}}/\Lambda} 
\big|_{M^{\alpha}_{\mathrm{Hod}}(\lambda,\nu)}=
d\omega_{{\mathcal M}}|_{{\mathcal M}_0}=0$.
\end{proof}

Note that the relative symplectic form
$\omega$ in \eqref{symplectic form on the moduli space}
induces an isomorphism
\[
 b\colon
 T_{M^{\alpha}_{\mathrm{Hod}}/\Lambda}
 \; \xrightarrow{\;\sim\;} \;
 \Omega^1_{M^{\alpha}_{\mathrm{Hod}}/\Lambda}.
\]
For $f\in{\mathcal O}_{M^{\alpha}_{\mathrm{Hod}}}$,
define a vector field $X_f\in T_{M^{\alpha}_{\mathrm{Hod}}/\Lambda}$
by
$X_f:=b^{-1}(-df)$,
where $df\in\Omega^1_{M^{\alpha}_{\mathrm{Hod}}/\Lambda}$
is the relative differential of $f$.
In other words,
$ \omega(X_f,v)=-df(v)$
for any $v\in T_{M^{\alpha}_{\mathrm{Hod}}/\Lambda}$.
Define a pairing 
$\{-,-\}\colon {\mathcal O}_{M^{\alpha}_{\mathrm{Hod}}}\times
{\mathcal O}_{M^{\alpha}_{\mathrm{Hod}}}\rightarrow
{\mathcal O}_{M^{\alpha}_{\mathrm{Hod}}}$ by
\begin{equation} \label {Poisson bracket}
\{ f,g\}:=
\omega(X_f,X_g)
\end{equation}
for $f,g\in{\mathcal O}_{M^{\alpha}_{\mathrm{Hod}}}$.
Then we have the following:

\begin{Cor}  \label {Cor: Poisson structure}
The pairing
$\{-,-\}\colon {\mathcal O}_{M^{\alpha}_{\mathrm{Hod}}}\times
{\mathcal O}_{M^{\alpha}_{\mathrm{Hod}}}\rightarrow
{\mathcal O}_{M^{\alpha}_{\mathrm{Hod}}}$
defined by \eqref{Poisson bracket}
becomes a Poisson bracket.
\end{Cor}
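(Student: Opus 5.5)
The bracket \eqref{Poisson bracket} is the standard bracket attached to a relative symplectic form. I would verify the Poisson axioms directly, working fiberwise over $\Lambda$ and then extending to the total space.

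\textbf{Algebraic axioms.} Bilinearity over $\mathbb{C}$ is immediate. In fact the bracket is ${\mathcal O}_\Lambda$-bilinear, because the relative differential of a function pulled back from $\Lambda$ vanishes. Skew-symmetry follows from the skew-symmetry of $\omega=\omega_{M^{\alpha}_{\mathrm{Hod}}/\Lambda}$, already noted after \eqref{symplectic form on the moduli space}. For the Leibniz rule, use $d(gh)=g\,dh+h\,dg$ for relative differentials. Since $b$ is ${\mathcal O}$-linear, this gives $X_{gh}=gX_h+hX_g$, and therefore $\{f,gh\}=g\{f,h\}+h\{f,g\}$. The identity $\{f,g\}=\omega(X_f,X_g)=-dg(X_f)\cdot(-1)$ up to sign, i.e.\ $\{f,g\}=X_g(f)=-X_f(g)$, makes this transparent.

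\textbf{Jacobi identity.} This is the only nontrivial point. I would use the Cartan formula for $d\omega$ evaluated on the three vector fields $X_f,X_g,X_h$. These are relative vector fields, that is, sections of $T_{M^{\alpha}_{\mathrm{Hod}}/\Lambda}$. The Cartan formula is valid for relative forms and relative vector fields, because the relative de Rham complex $\Omega^\bullet_{M^{\alpha}_{\mathrm{Hod}}/\Lambda}$ is a dg-algebra and the bracket of relative vector fields is again relative.

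Expanding $d\omega(X_f,X_g,X_h)$, each term of the form $X_f(\omega(X_g,X_h))=X_f\{g,h\}$ equals $\pm\{\{g,h\},f\}$. Each term of the form $\omega([X_f,X_g],X_h)$ equals $-dh([X_f,X_g])=-[X_f,X_g]h$, which is expressed through double brackets in the standard way. Collecting terms gives
\[
d\omega(X_f,X_g,X_h)=\pm\bigl(\{f,\{g,h\}\}+\{g,\{h,f\}\}+\{h,\{f,g\}\}\bigr).
\]
By Proposition \ref{Prop: d-closed}, $d\omega=0$, so the Jacobi identity follows. Alternatively, one can note that the map $f\mapsto X_f$ and $\omega$ are compatible with base change to each fiber $M^{\alpha}_{\mathrm{Hod}}(\lambda,\nu)$. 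There the bracket is the Poisson bracket of an honest symplectic form, which is closed by Proposition \ref{Prop: d-closed}. Since $M^{\alpha}_{\mathrm{Hod}}$ is smooth over $\Lambda$ (Proposition \ref{Prop: smoothness of moduli}), hence reduced and flat, the Jacobiator, which is a function on $M^{\alpha}_{\mathrm{Hod}}$ vanishing on every fiber, vanishes identically.

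The main (mild) obstacle is the bookkeeping of signs in the Cartan expansion and confirming that relative vector fields close under the Lie bracket. The latter holds because they are exactly the derivations killing ${\mathcal O}_\Lambda$.
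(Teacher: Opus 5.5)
Your proposal is correct and follows the paper's own route: the axioms other than Jacobi are routine, and the Jacobi identity comes from the relative $d$-closedness of $\omega$ (Proposition~\ref{Prop: d-closed}) via the invariant formula for $d\omega(X_f,X_g,X_h)$, which is, up to sign, the Jacobiator. There are two harmless sign slips: with $\omega(X_f,v)=-df(v)$ one gets $\{f,g\}=X_f(g)=-X_g(f)$ and $\omega([X_f,X_g],X_h)=+dh([X_f,X_g])$, but neither affects your conclusions.
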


\begin{proof}
The axioms of a Poisson bracket other than the Jacobi identity
are straightforward. 
The Jacobi identity follows from the relative
$d$-closedness of $\omega$.
\end{proof}

\end{document}